\newif\ifdeveloping
\newtheorem{theorem}{Theorem}[section]
\newtheorem{proposition}[theorem]{Proposition}
\newtheorem{lemma}[theorem]{Lemma}
\newtheorem{corollary}[theorem]{Corollary}
\newtheorem{obs}[theorem]{Observation}
\newtheorem{qtheorem}{Theorem\makebox[-1mm]{}}  
\newtheorem{qproblem}{Problem\makebox[-1mm]{}}  
\newtheorem{btheorem}{Black Box Theorem}
\theoremstyle{definition}
\newtheorem{definition}[theorem]{Definition}
\theoremstyle{remark}
\newcommand{\prtime}{{\count0=\time\divide\count0 by 60
\count1=-\count0\multiply\count1 by 60
\advance\count1 by \time
\the\count0:\the\count1}
}
\def\myheads#1;#2;{
\pagestyle{myheadings}
\markboth{{\sc\hfill #1\hfill\protect\makebox[0cm][r]{\rm\today; \prtime}}}
{{\sc\protect\makebox[0cm][l]{\rm\today;\ \prtime}\hfill #2\hfill}}
\thispagestyle{myheadings}
}
\newcommand{\acal}{{\mathcal A}}
\newcommand{\bcal}{{\mathcal B}}
\newcommand{\dcal}{{\mathcal D}}
\newcommand{\ecal}{{\mathcal E}}
\newcommand{\gcal}{{\mathcal G}}
\newcommand{\hcal}{{\mathcal H}}
\newcommand{\pcal}{{\mathcal P}}
\newcommand{\qcal}{{\mathcal Q}}
\newcommand{\tcal}{{\mathcal T}}
\newcommand{\abar}{\dot{A}}
\newcommand{\bbar}{\dot{B}}
\newcommand{\supp}{\operatorname{supp}}
\newcommand{\setm}{\setminus}
\newcommand{\empt}{\emptyset}
\newcommand{\subs}{\subset}
\newcommand{\sups}{\supset}
\newcommand{\oo}{{{\omega}_1}}
\newcommand{\dom}{\operatorname{dom}}
\newcommand{\ran}{\operatorname{ran}}
\def\<{\left\langle}
\def\>{\right\rangle}
\def\br#1;#2;{{\bigl[ {#1} \bigr]}^ {#2} }
\def\brsmall#1;#2;{{[ {#1} ]}^ {#2} }
\newcommand{\force}{\Vdash}
\def\to{\longrightarrow}
\newcommand{\oot}{{\omega}_2}
\newcommand{\nar}{{\not\makebox[-7pt]{}\rightarrow}}
\theoremstyle{plain}
\def\finf#1;#2;#3;{\operatorname{Fn}_{#1}(#2,#3)}
\def\finmok{\finf m;\oo;K;} 
\def\domm#1[#2]{\dom(#2)}
\def\domd{dom-disjoint}
\newcommand{\Homo}[2]{\operatorname{Hom}(#1,#2)}
\newcommand{\fink}[1]{\operatorname{\dcal}^{(k)}(#1)}
\newcommand{\seqk}[2]{\operatorname{\mathbb D}^{(k)}_{#1}(#2)}
\newcommand{\dvec}{\vec D}
\newcommand{\evec}{\vec E}
\newcommand{\nvec}{\vec N}
\newcommand{\mvec}{\vec M}
\newcommand{\ark}{AR$^{(k)}$}
\newcommand{\dplus}{\stackrel{.}+}
\begin{document}

\author[L. Soukup]{
Lajos Soukup}
\address{Alfr{\'e}d R{\'e}nyi Institute of Mathematics\\Budapest, V. 
Re\'altanoda utca 13-15\\H-1053, Hungary }  
\email{soukup@renyi.hu}

\subjclass[2000]{03E02, 03E35, 03E50, 05D10}
\keywords{rainbow Ramsey, polychromatic Ramsey, indestructible,
  forcing,
partition relations, Martin's Axiom}
\title{Indestructible colourings and rainbow Ramsey theorems}
\thanks{The preparation of this paper was supported by the 
Hungarian National Foundation for Scientific Research grant no. 61600
and 68262}

\begin{abstract}
We show that if a colouring
$c$ establishes $\oot\nar [(\oo;{\omega})]^2_{\omega}$
then $c$ establishes this negative partition relation
in each Cohen-generic extension of the ground model, i.e.
this property of $c$ is Cohen-indestructible.
This result yields a negative answer to a  question of Erd\H os and Hajnal:
it is consistent that GCH holds and there is a colouring
$c:\br {\oot};2;\to 2$ establishing
$\oot\nar [(\oo;{\omega})]^2_2$
such that some colouring $g:\br \oo;2;\to 2$
can not be embedded into $c$. 

It is also consistent that $2^{\oo}$ is arbitrarily large,
and there is a function $g$ establishing
$2^{\oo}\nar[(\oo,\oot)]^2_{\oo}$ but there is no  
uncountable $g$-rainbow subset of $2^{\oo}$.
 
We also show that if GCH holds then 
for each $k\in {\omega}$ there is 
a $k$-bounded colouring $f:\br \oo;2;\rightarrow \oo$ and 
there are two
c.c.c posets $\pcal$ and $\qcal$ such that
\begin{displaymath}
V^{\pcal}\models \text{``$f$ c.c.c-indestructibly establishes 
$\oo\nar^* [(\oo;\oo)]_{k-bdd}$'',}
\end{displaymath}
but
\begin{displaymath}
V^{\qcal}\models \text{``
$\oo$ is the union of countably many
$f$-rainbow sets
''.} 
\end{displaymath}
\end{abstract}

\maketitle 
\ifdeveloping 
\myheads{Indestructible Colouring};{Indestructible Colouring};
\fi

\section{Introduction}

Erd\H os and Hajnal  observed, in \cite{EH1},    that 
if  a graph $G$ establishes ${\omega}_1\nar \bigl(({\omega},\oo)\bigr)^2_2$ then 
$G$ is universal for countable graphs, i.e., every countable graph
is isomorphic to a spanned subgraph of $G$.
This result can not be generalized for higher cardinals because
of the following result of Shelah {\cite[Theorem 4.1]{Sh}}:
{\em (a) Assume that  ${\kappa}$, ${\lambda}$ and ${\tau}$ are cardinals
of cofinality greater than ${\omega}$ and $G$ is a graph on 
${\kappa}$. Then the property
\begin{itemize}
\item[($*$)]  {\em $G$ establishes
 ${\kappa}\nar (({\lambda},{\tau}))^2_2$}  
\end{itemize}
can not be destroyed by adding a single Cohen real, i.e.
if $V\models (*)$ then $V^{Fn({\omega},2)}\models (*)$.}\\
(b) {\em If you add a Cohen reals to some model $V$ then
in the generic extension 
there is a graph $C$ on $\oo$ 
which is not isomorphic to any spanned subgraph
of some graph $G$ from $V$.}

Learning this result
Erd\H os and Hajnal  raised the following
question in {\cite[Problem 6.b]{EH2}}:
{\em Assume that a graph $G$ establishes 
${\omega}_2\nar (\oo\dplus{\omega})^2_2$. 
Do all graphs of cardinality $\aleph_1$ embed into 
$G$?}

We answer their question in the negative  in theorem
\ref{tm:ellenpelda}. The proof is based on theorem \ref{tm:cind}
which says that  the property ``{\em $G$ establishes 
$\oot\nar ((\oo;{\omega}))^2_2$}'' 
is indestructible by adding  arbitrary numbers of Cohen reals
to the ground model.

Given a colouring $f:\br X;n;\to C$ a subset $P\subs X$ is called
{\em rainbow for $f$} 
(or {\em $f$-rainbow})
iff $f\restriction \br P;n;$ is one-to-one.
We also answer  another question of  Hajnal, \cite[Problem 4.1]{H2},      
in  the negative in theorem \ref{tm:con}: it is consistent 
with GCH that there is a function   $f$  which establishes
$\oot\nar [(\oo;{\omega})]^2_{\oo}$ such that  there is 
 no uncountable $f$-rainbow set.

In theorem \ref{tm:bau} we show that 
it is also consistent that $2^{\oo}$ is arbitrarily large,
and a function $g$ establishes
$2^{\oo}\nar[(\oo,\oot)]^2_{\oo}$ such that there is no  
uncountable $g$-rainbow set.

In the second part of the paper we deal with rainbow Ramsey theorems
concerning ``bounded'' functions.
A function $f:\br X;n;\rightarrow C$  is {\em ${\mu}$-bounded}
iff {$|f^{-1}\{c\}|\le {\mu}$} for each $c\in C $.

Let us recall some ``arrow'' notations:\\
{${\lambda}\rightarrow^* ({\alpha})^n_{{\kappa}-{\rm bdd}}$} holds
iff for every ${\kappa}$-bounded colouring of $\br {\lambda};n;$
there is a rainbow set of order type ${\alpha}$,\\
{\em ${\lambda}\rightarrow^* [({\alpha};{\beta})]_{{\kappa}-{\rm bdd}}$}
holds
iff for every ${\kappa}$-bounded colouring $c$ of $\br {\lambda};2;$
there is a set $A\subs {\lambda}$ of order type ${\alpha}$ and there is a set 
$B\subs {\lambda}$ of order type ${\beta}$ such that $\sup A\le \sup B$ and  
$|[A; B]\cap c^{-1}\{{\xi}\}|<{\kappa}$ for each ${\xi}\in \ran c$,
where $[A;B]=\{\{{\alpha},{\beta}\}:{\alpha}\in A, {\beta}\in B, {\alpha}<{\beta}\}$.

Clearly ${\lambda}\rightarrow^* ({\alpha})^2_{{\kappa}-{\rm bdd}}$
implies ${\lambda}\rightarrow^* [({\alpha};{\alpha})]_{{\kappa}-{\rm bdd}}$.

We say that  a function $f$  {\em c.c.c-indestrictibly establishes 
the negative partition  relation
$\Phi\nar^*\Psi$}
iff 
\begin{displaymath}
\text{
$V^P\models$ ``$f$ establishes 
$\Phi\nar^* \Psi$
''
}
\end{displaymath}
for each c.c.c poset $P$.

Since $\oo\rightarrow({\alpha})^2_{2}$ holds for ${\alpha}<\oo$
by \cite{BH}, and
 it was proved by Galvin, \cite{G}, that
 ${\lambda}\rightarrow ({\alpha})^n_k$ implies 
${\lambda}\rightarrow^* ({\alpha})^n_{k-{\rm bdd}}$
, we have 
$\oo\rightarrow^*({\alpha})^2_{2-bdd}$ for ${\alpha}<\oo$.
Moreover, Galvin, \cite{G},  showed that 
\begin{qtheorem}
CH implies that $\oo\nar^* (\oo)^2_{2-bdd}$.
\end{qtheorem}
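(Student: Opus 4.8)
\emph{Reformulation.} The plan is to build a $2$-bounded colouring $f:[\oo]^2\to\oo$ with no uncountable rainbow set. First I would note that, since $f$ is $2$-bounded, each colour is shared by at most two edges, so $f$ is essentially a near-perfect matching $M$ on the edge set $E=[\oo]^2$: the colour classes of size two pair edges up, and the $\aleph_1$ edges force $\aleph_1$ colours, so almost every edge is matched. A set $P$ is then rainbow iff $[P]^2$ contains no matched pair, i.e.\ no $\{e,e'\}\in M$ with $e\cup e'\subseteq P$. Hence it suffices to produce a matching $M$ on $E$ with the \emph{capture property}: every uncountable $P\subseteq\oo$ contains the union of some matched pair. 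Before constructing $M$ I would record a warning coming from the canonical Ramsey theorem: the restriction of any $2$-bounded colouring to a countably infinite set is still $2$-bounded, hence has an infinite rainbow subset. So countable rainbow sets are unavoidable, and capture can \emph{never} be arranged by killing countable subsets one at a time; the uncountability of $P$ must be exploited globally. This is exactly why the relation is consistent to fail and why CH is needed.

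\emph{Construction.} I would build $M$ (equivalently $f$) by recursion on $\delta<\oo$ along a continuous increasing chain $\langle N_\delta:\delta<\oo\rangle$ of countable elementary submodels of $(H(\oot),\in,\lhd)$ with $N_\delta\cap\oo=\delta$ a limit ordinal, where CH guarantees that each countable subset of $\oo$ is enumerated cofinally and is eventually caught inside some $N_\delta$. At stage $\delta$ I decide the colours of all new edges $\{\xi,\delta\}$ with $\xi<\delta$, using a fixed cofinal $\omega$-sequence $C_\delta\subseteq\delta$ supplied by $N_\delta$. The aim of stage $\delta$ is: for each set $B$ cofinal in $\delta$ and suitably aligned with $C_\delta$, some new edge $\{\xi,\delta\}$ gets matched to an \emph{old} edge $e'\subseteq B$ (so that any uncountable set having $\delta$ as such a limit point is captured there). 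Using CH inside $N_\delta$, I would run the bookkeeping so that each matched old edge $e'$ is chosen from edges not yet used by $M$ and with large minimum; since only countably many edges lie below $\delta$ and earlier stages consumed them sparingly, an unused $e'$ is always available, keeping $M$ a genuine matching.

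\emph{Verification and the main obstacle.} To finish I would take an arbitrary uncountable $A\subseteq\oo$ and show, by a pressing-down/reflection argument, that the set of $\delta\in A$ with $A\cap\delta$ cofinal in $\delta$ and aligned with $C_\delta$ is nonempty (indeed large); at such a $\delta$ the stage-$\delta$ commitment yields a matched pair with both edges inside $A$, so $A$ is captured and hence not rainbow. The hard part, and the step where CH is genuinely essential, is to make the two demands compatible: the matching must stay conflict-free (that is what $2$-boundedness means) while \emph{every} uncountable set, not merely every club, is aligned with some $C_\delta$ so that a brand-new matched pair lands inside it. Ordinary club guessing is not enough because an uncountable set need contain no club; one must instead exploit the uncountably many limit points of $A$ together with the coherence of the submodel chain (equivalently, one can extract from CH a rigid structure such as an entangled set of reals and read the matching off it), and then argue that this rigidity forces two equally-coloured edges inside every uncountable set. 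Proving this capture lemma while never re-using an edge is the crux on which the whole argument turns.
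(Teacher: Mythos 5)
You have not proved the theorem, and you say so yourself: the ``capture lemma'' --- that every uncountable set is aligned with some ladder $C_\delta$ so that a matched pair lands inside it --- is precisely what is missing, and the gesture toward entangled sets and coherence of the submodel chain is not an argument. The step that drives you into this dead end is the ``warning'' paragraph. From the true fact that every infinite set contains an infinite rainbow subset you infer that capture ``can never be arranged by killing countable subsets one at a time''; that is a non sequitur. One does not need every countable set to be non-rainbow; one only needs every uncountable $A$ to contain \emph{some} non-rainbow countable configuration, and that \emph{can} be arranged by purely countable bookkeeping, provided each countable set $B$ is served not at the single stage $\delta=\sup B$ (where, as you correctly note, only countably many of the $\aleph_1$ cofinal subsets of $\delta$ can be handled, whence the hopeless guessing problem) but at \emph{every} stage above the index at which $B$ is enumerated. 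A second flaw is local: matching a new edge $\{\xi,\delta\}$ to an \emph{old} edge $e'\subseteq B$ is both unnecessary and resource-starved --- $[B]^2$ is countable and each of its edges can be matched only once, so a fixed $B$ cannot serve uncountably many later points this way. The standard move is to pair two \emph{new} edges with each other; note that these share the vertex $\delta$, so the ``near-perfect matching of disjoint edges'' normal form you set up at the start is not the right picture either.

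For the record, the paper does not prove this quoted theorem --- it is Galvin's, cited from his letter \cite{G} --- but Proposition~\ref{pr:ch} of the paper exhibits the template, and the $2$-bounded version is a one-line adaptation of it. Under CH enumerate $[{\omega}_1]^{\omega}$ as $\{A_\beta:\omega\le\beta<\omega_1\}$ with $A_\beta\subseteq\beta$. At stage $\alpha$ colour the edges $\{\xi,\alpha\}$, $\xi<\alpha$, as follows: for each $\beta<\alpha$ pick two points $\xi_\beta\neq\xi'_\beta$ in $A_\beta$ with the pairs $\{\xi_\beta,\xi'_\beta\}$ pairwise disjoint as $\beta$ varies (a greedy choice, since each $A_\beta$ is infinite and there are only countably many tasks); give $\{\xi_\beta,\alpha\}$ and $\{\xi'_\beta,\alpha\}$ a common fresh colour $c_{\alpha,\beta}$, with distinct colours for distinct $\beta$, and give all remaining edges at $\alpha$ pairwise distinct fresh colours. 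Every colour class then has size at most $2$, so $f$ is $2$-bounded. If $A\subseteq\omega_1$ is uncountable, fix an infinite countable $B\subseteq A$, say $B=A_\beta$, and any $\alpha\in A$ with $\alpha>\beta$; then $\{\xi_\beta,\alpha\}$ and $\{\xi'_\beta,\alpha\}$ are two distinct edges of the same colour with all endpoints in $A$, so $A$ is not rainbow. No ladders, no pressing down, no club guessing, and no re-use of edges is ever in danger: CH enters only through the enumeration of $[{\omega}_1]^{\omega}$.
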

On the other hand, Todorcevic, \cite{T}, proved that 
\begin{qtheorem}
PFA  implies that $\oo\rightarrow^* (\oo)^2_{2-bdd}$.
\end{qtheorem}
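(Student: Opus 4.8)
The plan is to obtain the uncountable rainbow set as the union of a sufficiently generic filter on a forcing of finite approximations, and to feed that forcing into PFA. Fix a $2$-bounded colouring $c:\br \oo;2;\to C$. Since $c$ is $2$-bounded, every colour is realised by at most two pairs, so there is a partial involution $e\mapsto e'$ on $\br \oo;2;$ pairing each twice-used pair with its unique twin; a set $A$ is $c$-rainbow exactly when $\br A;2;$ contains no twin pair. First I would consider the poset $\pcal_0$ whose conditions are the finite $c$-rainbow subsets of $\oo$, ordered by reverse inclusion. Any filter on $\pcal_0$ has $c$-rainbow union, since two pairs in the union lie in a common condition by directedness, so it would suffice to meet the $\aleph_1$ dense sets $D_{\xi}=\{p:p\setminus{\xi}\neq\empt\}$, forcing the union to be unbounded and hence uncountable.

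The heart of the matter is an \emph{extension (properness) lemma}, and here the naive $\pcal_0$ is already too crude. If $x\neq y$ and $c(\{x,{\beta}\})=c(\{y,{\beta}\})$ for co-countably many ${\beta}$ — a configuration that CH permits, and which underlies Galvin's $\oo\nar^* (\oo)^2_{2-bdd}$ — then the condition $\{x,y\}$ has \emph{no} extension above a large ${\xi}$, so $D_{\xi}$ fails to be dense. The remedy is to forbid such ``doomed'' conditions. I would therefore pass to the poset $\pcal$ whose conditions are pairs $(p,\mathcal N)$ with $p$ a finite $c$-rainbow set and $\mathcal N$ a finite $\in$-chain of countable elementary submodels of some $(H({\theta}),\in,c)$, coherent in the usual side-condition sense: each point of $p$ lying outside a model $N\in\mathcal N$ avoids every ``small'' obstruction coded inside $N$. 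The role of the models is precisely to certify that a condition is non-doomed and may be continued.

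Next I would verify the two standard demands. For \emph{properness}: given a countable $M\prec H({\theta})$ containing $c$ and $\pcal$, and $(p,\mathcal N)\in\pcal\cap M$, the condition $(p,\mathcal N\cup\{M\})$ is $(M,\pcal)$-generic; this is the routine side-condition argument, and genericity reduces meeting each dense set of $M$ to an extension problem carried out \emph{inside} $M$. For \emph{density of $D_{\xi}$}: one must extend a non-doomed $p$ by a point ${\beta}$ above ${\xi}$ with $p\cup\{{\beta}\}$ still rainbow. The new pairs $\{u,{\beta}\}$ ($u\in p$) must avoid (i) the colours already on $\br p;2;$, and (ii) coinciding with one another. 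By $2$-boundedness obstruction (i) excludes only finitely many ${\beta}$ — at most one per twin of a pair in $\br p;2;$ — while the non-doomedness built into the side conditions confines obstruction (ii) to a set the models recognise as small, so a legal ${\beta}$ is produced by elementarity.

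Finally, apply PFA to $\pcal$ and the family $\{D_{\xi}:{\xi}<\oo\}$ of size $\aleph_1$ to obtain a filter $G$ meeting all of them; then $\bigcup\{p:(p,\mathcal N)\in G\}$ is an unbounded, hence uncountable, $c$-rainbow subset of $\oo$, giving $\oo\rightarrow^* (\oo)^2_{2-bdd}$. The main obstacle is exactly the passage from $\pcal_0$ to $\pcal$: isolating the correct non-doomedness condition so that it is both preserved under the extensions needed for density and compatible with the side-condition properness proof. This is where $2$-boundedness is indispensable, as it forces every relevant obstruction set to be either finite or ``thin'' enough to be dodged, and it is also why the statement is a theorem of PFA rather than of ZFC: under CH the doomed configurations above genuinely occur and destroy every uncountable rainbow set.
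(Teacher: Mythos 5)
You should note at the outset that the paper contains no proof of this statement: it is quoted as background, attributed to Todorcevic \cite{T} (a detailed write-up appears in \cite{ACS}), so your sketch must be judged against the standard argument. Its skeleton you reproduce correctly: one forces with finite rainbow approximations, applies PFA to the $\aleph_1$ dense sets $D_{\xi}$, and takes the union of the generic filter; and your diagnosis of why the naive poset $\pcal_0$ fails is also correct, since a pair $u\ne v$ with $B_{u,v}=\{{\beta}<\oo: c(\{u,{\beta}\})=c(\{v,{\beta}\})\}$ uncountable is a condition above which $D_{\xi}$ is not dense. (A small inaccuracy: Galvin's CH construction diagonalizes against all uncountable sets rather than resting on such single doomed pairs, but this is harmless.)

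The genuine gap is at exactly the point you label routine. Your side-condition poset is never actually defined: ``each point of $p$ outside $N$ avoids every small obstruction coded inside $N$'' is not a checkable condition, and the two verifications you then assert are where the whole difficulty lives. Indeed, the models are superfluous where you invoke them: if one simply forbids conditions containing $u\ne v$ with $B_{u,v}$ uncountable, then for a legal $p$ the union of the sets $B_{u,v}$ is countable and obstruction (i) excludes only finitely many ${\beta}$, so co-countably many ${\beta}>{\xi}$ witness density of $D_{\xi}$ with no side conditions at all. The entire burden therefore falls on properness, and there the ``routine side-condition argument'' does not go through: given $(p,\mathcal N\cup\{M\})$ with ${\beta}\in p\setm M$ and a dense $D\in M$, one must find $q\in D\cap M$ compatible with $p$, and compatibility requires the new points ${\gamma},{\gamma}'\in q\setm p$ to avoid $c(\{{\gamma},{\beta}\})=c(\{{\gamma}',{\beta}\})$ and to avoid pairs inside $M$ that are twins of cross pairs $\{{\alpha},{\beta}\}$. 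These constraints are partial involutions definable only from the parameter ${\beta}\notin M$; they are not elements of $M$, so elementarity says nothing about them, and $2$-boundedness alone does not make them thin — for a fixed ${\beta}$ the twin involution ${\gamma}\mapsto{\gamma}'$ may match up uncountably many ordinals below $M\cap\oo$. This is precisely why the standard proof first establishes a normalization lemma, reducing to colourings in which twin pairs share their larger element: for a normal colouring, a cross pair $\{{\alpha},{\beta}\}$ with ${\alpha}\in M$ and ${\beta}\notin M$ can only be twinned with another pair with top ${\beta}$, which restores control over the extension inside $M$. Nothing in your proposal plays the role of normalization, and without it (or some substitute) your properness claim is exactly the unproved theorem.
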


Abraham,  Cummings and Smyth showed that
$MA_{\aleph_{1}}$ is not enough to get  
 $\oo\rightarrow^* (\oo)^2_{2-bdd}$. 
More precisely, they proved the following theorem:
\begin{qtheorem}[{\cite[Theorem 3]{ACS}}]
It is consistent that there is a function $c:\br \oo;2;\to \oo$ which 
c.c.c-indestructibly 
establishes
 $\oo\nar^* (\oo)^2_{2-bdd}$.
\end{qtheorem}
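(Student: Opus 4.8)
The plan is to construct the colouring in a model of $\diamondsuit$ by a recursion of length $\oo$, weaving the defeat of every c.c.c.\ name for an uncountable rainbow set into the construction itself.

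First I would recast the problem. Since $c$ is $2$-bounded it is, up to relabelling colours, a partial matching $M$ on the edge set $\br\oo;2;$: two edges are $M$-mates iff they receive the same colour, and no edge has two mates. A set $A$ is $c$-rainbow exactly when $\br A;2;$ is $M$-independent, i.e.\ contains no mated pair. The guiding observation is an absoluteness remark: the assertion ``the points of $e\cup e'$ span a mated pair'' concerns only the fixed ground-model $c$ on finitely many points, so it is absolute. Hence if $P$ is c.c.c.\ and $p\force\dot A$ is an uncountable rainbow set, and $p_\xi\force F_\xi\subseteq\dot A$ with each $F_\xi$ a finite subset of $\dot A$, then whenever $p_\xi$ and $p_\eta$ are compatible the union $F_\xi\cup F_\eta$ must be $c$-rainbow (a common extension would otherwise force a non-rainbow set into the rainbow $\dot A$). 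Contrapositively, an $M$-mated pair inside $F_\xi\cup F_\eta$ forces $p_\xi\perp p_\eta$. Thus it suffices to build $M$ so that the conditions arising from any such name can be thinned to an \emph{uncountable} pairwise-incompatible family, each incompatibility witnessed by a mated pair inside the corresponding $F_\xi\cup F_\eta$; such a family is an antichain, contradicting c.c.c.

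Next I would run the recursion. Working in $V\models\diamondsuit$, I fix a $\diamondsuit$-sequence guessing, at each $\delta<\oo$, a code for a pair $(P,\dot A)$ (a c.c.c.\ poset of size $\le\oo$ together with a name for a subset of $\oo$) and a continuous $\in$-chain of countable elementary submodels $\<N_\zeta:\zeta<\delta\>$ of $H(\theta)$ with $N_\zeta\cap\oo=\zeta$. On the stationary set $T$ where the guess is correct I extract, inside $N_\delta$, a condition $p_\delta$ and a finite $F_\delta\ni\delta$ with $p_\delta\force F_\delta\subseteq\dot A$; thinning $T$ I may assume $\{F_\delta:\delta\in T\}$ is a $\Delta$-system with root $R$ and uniform order type, and that $p_{\delta'}\in N_\delta$ for $\delta'<\delta$ in $T$. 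When I reach level $\delta\in T$ and colour the edges $\{\eta,\delta\}$ ($\eta<\delta$), I install, for each of the countably many $\delta'\in T\cap\delta$, one mated pair inside $F_\delta\cup F_{\delta'}$: I echo the colour of a suitable already-coloured edge $e'\subseteq F_\delta\cup F_{\delta'}$ onto a fresh edge $\{\eta,\delta\}$. By the absoluteness remark this forces $p_\delta\perp p_{\delta'}$, so $\{p_\delta:\delta\in T\}$ is an uncountable antichain and the putative name is refuted; the same bookkeeping with $P$ trivial shows $c$ has no uncountable rainbow set in $V$ itself.

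The main obstacle, and the whole reason the theorem is delicate, is the interaction of this plan with $2$-boundedness. Installing a mated pair inside $F_\delta\cup F_{\delta'}$ means reusing a colour, and each colour may be reused \emph{only once}; moreover neither $F_\delta$ nor $F_{\delta'}$ is itself rainbow-violating, so the echoed edge $e'$ must genuinely straddle the private parts of the two sets, and it must still be ``fresh'' (used only once so far) at the moment I echo it. The crux is therefore a reservation argument showing that, level by level and globally across all $\oo$ steps, a fresh straddling edge is always available to create each required clash while no colour is ever used a third time --- in effect, that the $\aleph_1$ mated pairs demanded by the $\binom{|T|}{2}$ incompatibilities can be laid down by a single coherent matching. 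Arranging the sets $F_\delta$ (via the choice inside $N_\delta$ and the $\Delta$-system thinning) to carry enough fresh straddling edges, and scheduling the echoes so as never to exhaust them, is the technical heart of the construction; making $\diamondsuit$ deliver arbitrary c.c.c.\ data with the required $\Delta$-system structure is the other point needing care.
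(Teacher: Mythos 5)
First, a point of orientation: the paper does not prove this statement at all --- it is quoted as background from \cite{ACS} --- and both the proof in \cite{ACS} and the paper's own single-function strengthening (Theorem \ref{tm:acgen}, via Theorem \ref{tm:pos}) obtain c.c.c-indestructibility by \emph{forcing}: under CH one first builds a ground-model colouring with a strong combinatorial property (an \ark-function), codes the relevant combinatorics into a strongly solid graph, and then applies the Black Box Theorem of \cite{So} (the Abraham--Todor\v cevi\v c amalgamation method of \cite{AT}) to produce a c.c.c poset that \emph{seals} the colouring against all later c.c.c posets. You instead try to build the indestructible colouring outright by a $\diamondsuit$-recursion in the ground model. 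Your preliminary reductions are sound: rainbowness of a finite ground-model set is absolute, so compatibility of $p_\xi$ and $p_\eta$ does force $F_\xi\cup F_\eta$ to be rainbow, an installed mated pair does witness incompatibility, and via nice names one can cut an arbitrary c.c.c counterexample down to a structure of size $\aleph_1$ that $\diamondsuit$ can enumerate.

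The genuine gap is that the step you yourself label ``the technical heart'' --- the reservation argument --- is the entire content of the theorem, it is missing, and there is real reason to doubt it can be carried out as described. For a fixed pair $\delta'<\delta$ the admissible echo sources are straddling edges between the two finite private parts $F_{\delta}\setm R$ and $F_{\delta'}\setm R$, a \emph{finite} pool; an edge is usable only while its colour class is not yet full, but edges get spoiled at intermediate stages by the processing of \emph{other} guessed names (the same edge sits inside the $F$-systems of many names, and once used as an echo source or target its colour is dead forever), and nothing in your scheme protects these finite pools from cross-name exhaustion. Nor can you afford partial failure: to refute c.c.c you need an uncountable family that is \emph{pairwise} incompatible, and a graph on $T$ in which each $\delta$ is adjacent to all but countably (or even finitely) many earlier points of $T$ need not contain an uncountable clique, so ``handle most pairs at each stage'' does not suffice. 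There is also a bookkeeping defect in the guessing itself: your fresh edges at stage $\delta$ have the form $\{\eta,\delta\}$, so you need $\delta\in F_\delta$, yet the $\diamondsuit$-guess at $\delta$ codes only the name below $\delta$, and the set of ordinals that some condition forces into $\dot A$ is merely uncountable, not stationary, so the stationary set of correct guesses may never meet the stages at which an echo can actually be installed. These scheduling conflicts are exactly what the forcing approach of the paper and of \cite{ACS} dissolves generically --- the Black Box poset amalgamates all the requirements at once --- and your hand-scheduled $\diamondsuit$ construction, as it stands, does not.
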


They also showed that the property 
``{\em $c$ establishes $\oo\nar^* (\oo)^2_{2-bdd}$}'' 
is not automatically c.c.c-indestructible:
\begin{qtheorem}[{\cite[Theorem 4]{ACS}}]
If CH holds and there is a Suslin-tree then  
there is a function $c':\br \oo;2;\to 2$ and there is a c.c.c poset 
$P$ such that  
\begin{enumerate}[(a)]
\item  $c'$  establishes $\oo\nar^* (\oo)^2_{2-bdd}$,
\item $V^P\models$ there is an uncountable $c'$-rainbow set.
\end{enumerate}
\end{qtheorem}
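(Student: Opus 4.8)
The plan is to read the witnessing colouring off the Suslin tree, arranging $2$-boundedness and the failure of the rainbow relation by a recursion of length $\oo$ that uses CH, while building in a \emph{compatibility} with the tree that lets the most obvious c.c.c forcing (the tree itself) resurrect an uncountable rainbow set. Concretely, fix a Suslin tree $T$ with underlying set $\oo$ and take $P=T$, the forcing whose conditions are the nodes of $T$ ordered by reverse tree order ($s\le_P t$ iff $t\le_T s$). As $T$ has no uncountable antichain, $P$ is c.c.c, and a generic filter is a cofinal branch $b\subs\oo$, i.e.\ an uncountable $<_T$-chain. The goal is a $2$-bounded colouring $c'$ --- equivalently a partial matching $M$ on $\br \oo;2;$, where matched couples receive a common colour and every other pair a private one --- such that
\begin{enumerate}[(A)]
\item every uncountable $X\subs\oo$ has $e\ne e'$ in $\br X;2;$ with $\{e,e'\}\in M$ (no uncountable set is $c'$-rainbow);
\item every $\{e,e'\}\in M$ has at least one of $e,e'$ a $<_T$-\emph{incomparable} pair.
\end{enumerate}

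\emph{Why this suffices.} Property (A) is exactly the statement that $c'$ establishes $\oo\nar^* (\oo)^2_{2-bdd}$ (an uncountable subset of $\oo$ has order type $\oo$), giving clause (a). For clause (b), any $<_T$-chain $C$ has $\br C;2;$ consisting of comparable pairs only, so by (B) no matched couple lies inside $\br C;2;$; hence $c'\restriction\br C;2;$ is injective and $C$ is $c'$-rainbow. Applied to the generic branch $b$ this yields an uncountable $c'$-rainbow set in $V^{P}$, and since $P$ is c.c.c this is the required extension.

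\emph{Building $M$.} Here CH enters. Using CH I would enumerate the countable subsets of $\oo$ as $\langle Z_\xi:\xi<\oo\rangle$ and fix a bookkeeping assigning each $Z_\xi$ to cofinally many stages. I then build $M$ by recursion on $\beta<\oo$, at stage $\beta$ colouring the pairs $\{\alpha,\beta\}$ with $\alpha<\beta$: if $\beta$ is assigned to some $Z=Z_\xi\subs\beta$ containing an incomparable pair $\{\alpha_1,\alpha_2\}$, I pick an as-yet-unused $\alpha_0\in Z$ and match $\{\alpha_0,\beta\}$ with $\{\alpha_1,\alpha_2\}$, giving every other pair a fresh colour. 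The partner $\{\alpha_1,\alpha_2\}$ is incomparable, so (B) is free of charge, and the freshness of $\alpha_0,\beta$ keeps $c'$ $2$-bounded. For (A): an uncountable $X$ is not a chain (Suslinness), hence has an incomparable pair below some $\delta_0$, so every trace $X\cap\delta$ with $\delta>\delta_0$ contains an incomparable pair; catching $X$ then reduces to locating a stage $\beta\in X$ above $\delta$ assigned to $X\cap\delta$, at which the matched couple lands inside $\br X;2;$.

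\emph{The main obstacle.} The delicate step is (A): there are $2^{\oo}$ uncountable sets $X$, and I must guarantee that each is caught by a match that actually lies \emph{inside} $X$, not merely inside a guessed countable trace. This is precisely where CH together with a pressing-down / elementary-submodel argument in the spirit of Galvin's theorem (``CH implies $\oo\nar^*(\oo)^2_{2-bdd}$'', recalled above) must be made to work; the only new feature is that Suslinness of $T$ supplies the incomparable pairs demanded by (B), so imposing tree-compatibility costs nothing in the recursion. Turning the informal ``choose $\beta\in X$ above $\delta$ assigned to $X\cap\delta$'' into a genuine catch --- e.g.\ by iterating the reflection along a club of countable $N\prec H_\theta$ with $\delta=N\cap\oo$, or by a suitable two-dimensional bookkeeping --- is the real labour of the proof.
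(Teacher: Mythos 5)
The paper itself offers no proof here: the statement is quoted verbatim from Abraham--Cummings--Smyth \cite[Theorem 4]{ACS}, so your attempt must be judged against the known argument rather than anything in this text. Your global architecture is the right one and is essentially that of ACS: force with the Suslin tree $T$ itself (c.c.c.\ since $T$ has no uncountable antichains), arrange the $2$-bounded colouring so that every matched couple contains a $<_T$-incomparable pair, whence every chain --- in particular the generic branch, which is uncountable since c.c.c.\ forcing preserves $\oo$ --- is $c'$-rainbow; and your (A) is the correct reformulation of clause (a). The problem is that your recursion, as written, cannot be carried out, and the step you defer is not a routine Galvin-style catch.

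Concretely: your bookkeeping assigns each $Z_\xi$ to cofinally many stages, and since $\operatorname{cf}(\oo)=\oo$ that is \emph{uncountably} many stages; at each such stage your scheme consumes a fresh, as-yet-unmatched incomparable pair from $\br Z_\xi;2;$ (a pair can serve as a partner only once, since a second use gives its colour three occurrences and destroys $2$-boundedness), but $\br Z_\xi;2;$ is countable. So either $2$-boundedness fails or each $Z_\xi$ is treated at only countably many, hence boundedly many, stages --- and then an arbitrary uncountable $X$ need not contain any treatment stage $\beta$, so no matched couple need land inside $\br X;2;$. This cardinality clash is exactly the ``real labour'' you postpone, and no pressing-down or elementary-submodel overlay repairs this particular scheme. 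The fix is to match two pairs \emph{through the current point}: at stage $\beta$ treat every $\xi<\beta$ with $Z_\xi\subs\beta$ (tasks listed in type ${\omega}$, fresh elements for each), matching $\{{\alpha}_0,\beta\}$ with $\{{\alpha}_1,\beta\}$ where ${\alpha}_1$ is $<_T$-incomparable with $\beta$; these pairs are new at stage $\beta$, so there is no exhaustion, and the couple contains the incomparable pair $\{{\alpha}_1,\beta\}$, giving (B). To guarantee such an ${\alpha}_1$ when it matters, first re-enumerate $T$ so that $<_T$ refines the ordinal order; since $T$ is Suslin, $X$ has no uncountable chain, so by Dushnik--Miller ($\oo\rightarrow(\oo,{\omega})^2$) every uncountable $X$ contains an infinite antichain $A$, and for any $Z\sups A$ and any $\beta>\sup Z$ at most one element of $A$ is comparable with $\beta$ (tree-predecessors of $\beta$ in $A$ form a chain inside an antichain, and tree-successors below $\beta$ as an ordinal are excluded by the normalization). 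Hence, choosing $Z_\xi\subs X$ to contain an infinite antichain, \emph{every} $\beta\in X$ above $\max(\xi,\sup Z_\xi)$ places a matched couple inside $\br X;2;$, and (A) follows with no further machinery.
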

We show that even the negative partition
relation 
$\oo\nar^* [(\oo;\oo)]_{k-bdd}$ is
consistent  with $MA_{\aleph_{1}}$ for each 
$k\in {\omega}$. 

Moreover, Abraham and Cumming used two different functions 
in their theorems above. 
We show that a single function can play double role.

\begin{theorem}\label{tm:acgen}
If GCH holds then 
for each $k\in {\omega}$ there is 
a $k$-bounded colouring $f:\br \oo;2;\rightarrow \oo$ and 
there are two
c.c.c posets $\pcal$ and $\qcal$ such that
\begin{displaymath}
V^{\pcal}\models \text{``$f$ c.c.c-indestructibly establishes 
$\oo\nar^* [(\oo;\oo)]_{k-bdd}$'',}
\end{displaymath}
but
\begin{displaymath}
V^{\qcal}\models \text{``
$\oo$ is the union of countably many
$f$-rainbow sets
''.} 
\end{displaymath}
\end{theorem}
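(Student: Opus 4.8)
The plan is to recast both clauses as statements about a single hypergraph attached to $f$ and then to realize the two worlds by c.c.c.\ posets over the \emph{same} model $V$. Assume $k\ge 2$, the case $k=1$ being degenerate. Call $k$ distinct pairs $e_0,\dots,e_{k-1}\in\br \oo;2;$ with $f(e_0)=\dots=f(e_{k-1})$ a \emph{conflict}, and let $\hcal$ be the hypergraph on $\oo$ whose edges are the vertex sets $\bigcup_{i<k}e_i$ of conflicts; since $f$ is $k$-bounded a conflict is just a colour class of size exactly $k$, so distinct conflicts use disjoint families of pairs and each edge has at most $2k$ vertices. A set $P\subs\oo$ is $f$-rainbow iff it is $\hcal$-independent, so ``$\oo$ is the union of countably many $f$-rainbow sets'' says exactly that $\hcal$ has countable chromatic number; and since a countable union of countable sets is countable, this forces some colour class to be an \emph{uncountable} $\hcal$-independent set. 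On the other side, ``$f$ establishes $\oo\nar^*[(\oo;\oo)]_{k-bdd}$'' says that for every $A,B$ of order type $\oo$ with $\sup A\le\sup B$ some conflict lies inside $[A;B]$; taking $A=B=P$ shows this already fails once an uncountable $\hcal$-independent set $P$ exists. Hence the two target worlds are strictly opposite. Since $\hcal$-independence is absolute, $V$ itself must have no uncountable $f$-rainbow set, and the reason a single $f$ can serve both roles is that the two posets will have a \emph{non}-c.c.c.\ product: $\qcal$ loses its c.c.c.-ness in $V^{\pcal}$, so no contradiction arises.

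The construction builds $f$ (equivalently $\hcal$) by a recursion of length $\oo$ under GCH, threading conflicts through a ground-model skeleton so as to balance two opposing demands: $\hcal$ must be \emph{dense} enough that, after $\pcal$, every placement of an uncountable oriented pair $(A,B)$ is forced to swallow a conflict; yet \emph{coherent} (tree-guided, conflicts confined to thin compatible configurations) so that the decomposition poset $\qcal$ stays c.c.c. The improvement over \cite[Theorem 4]{ACS}, which assumed a Suslin tree, is that here the skeleton and the requisite guessing come from GCH alone: CH supplies the enumeration of the countably many relevant approximations at each step, and $2^{\oo}=\oot$ lets us bookkeep nice names in the indestructibility argument. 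I would then let $\qcal$ be the poset of finite partial maps $p:\oo\rightharpoonup\omega$ whose colour classes are $\hcal$-independent, ordered by extension, and let $\pcal$ be a finite-condition poset adding a generic ``negativity certificate'': finite approximations carrying finite promises that forbid the future appearance of a $k$-spread oriented pair.

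That $\qcal$ is c.c.c.\ and forces the decomposition is the easier half. Given $\oo$ conditions, a $\Delta$-system reduction isolates a common root, and the coherence built into $\hcal$ guarantees that two conditions with matching root amalgamate without creating a monochromatic edge across their tails, so uncountably many conditions are pairwise compatible; a density argument then yields $V^{\qcal}\models\oo=\bigcup_{n}P_{n}$ with each $P_{n}$ $f$-rainbow. The c.c.c.-ness of $\pcal$ is proved the same way, by a $\Delta$-system together with an amalgamation lemma asserting that two conditions agreeing on their root, promises included, can always be merged.

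The heart of the proof, and the main obstacle, is the \emph{indestructibility} of negativity: I must show $V^{\pcal}\models$ no c.c.c.\ forcing $R$ introduces a $k$-spread pair $(A,B)$. The plan is to prove that $\pcal$ has precaliber $\oo$ (whence it is productively c.c.c.) via the same amalgamation lemma, and then to argue by reflection: if some $R$ forced names $\dot A,\dot B$ for a $k$-spread pair, then in $V^{\pcal}$ one extracts, using the c.c.c.-ness of $R$, uncountably many finite ``candidate'' pieces of $(\dot A,\dot B)$ lying below pairwise compatible conditions; feeding $k$ of these into the amalgamation lemma produces a condition forcing $k$ equally coloured pairs inside $[\dot A;\dot B]$, contradicting $k$-spreadness. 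Making this extraction legitimate is exactly where the design must be strong: the conflicts seeded during the recursion have to be positioned so that \emph{arbitrary} uncountable sets appearing in \emph{arbitrary} c.c.c.\ extensions are caught, and this is what forces the delicate trade-off against the coherence needed for $\qcal$. Once the amalgamation lemma is in place both c.c.c.\ statements and both forcing conclusions follow by routine genericity; the real work is engineering $f$ so that the \emph{same} conflict hypergraph is simultaneously coherent enough for $\qcal$ and robustly dense enough for $\pcal$.
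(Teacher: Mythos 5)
There is a genuine gap, and it sits exactly where you flag it yourself. Your plan for the indestructibility half is: prove $\pcal$ has precaliber $\oo$, then, given an arbitrary c.c.c.\ poset $R$ in $V^{\pcal}$ with names $\dot A,\dot B$, ``extract uncountably many finite candidate pieces of $(\dot A,\dot B)$ lying below pairwise compatible conditions'' and amalgamate. But c.c.c.\ of $R$ gives you no such extraction: an uncountable pairwise-compatible family is precaliber $\aleph_1$, which an arbitrary c.c.c.\ $R$ (in particular one added by $\pcal$ itself) need not have. Productive c.c.c.-ness of $\pcal$ controls $R$-names for $R\in V$, but the theorem quantifies over all c.c.c.\ $R\in V^{\pcal}$, and your ``negativity certificate'' poset is never constructed -- the sentence ``the conflicts seeded during the recursion have to be positioned so that arbitrary uncountable sets appearing in arbitrary c.c.c.\ extensions are caught'' is a restatement of the theorem, not a proof step. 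The paper fills this hole with a different mechanism: it isolates the \ark{} property (built by a short CH recursion, Lemma \ref{lm:const}), re-expresses ``$f$ establishes $\oo\nar^* [(\oo;\oo)]_{k-bdd}$'' as $1$-solidity of an auxiliary graph $H_f$ on $\oo\times K$ (Lemma \ref{lm:1}), proves from \ark{} plus CH that $H_f$ is \emph{strongly} solid via chains of elementary submodels (Lemmas \ref{lm:main} and \ref{lm:strongly}), and then invokes the Black Box Theorem of \cite{So} (Abraham--Todor\v cevi\v c technology), which is precisely the name-anticipating machinery that manufactures a c.c.c.\ $\pcal$ making solidity c.c.c.-indestructible. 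Nothing in your sketch replaces that black box. Similarly, the ``coherence'' of the conflict hypergraph that is supposed to make your $\qcal$ c.c.c.\ is never specified; in the paper the c.c.c.-ness of the rainbow-partition poset comes not from any seeded coherence of $f$ but again from Lemma \ref{lm:main}, applied on an $\nvec$-separated set $X\in\br\oo;\oo;$, and the single function serving both roles is then obtained by the transport $f=g\circ h$ along a bijection $h:\oo\to X$ -- a step your recursion-based plan has no analogue of.

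Two smaller points. First, your hypergraph reformulation is off: a conflict as you define it is a colour class of size exactly $k$, but a set $P$ already fails to be $f$-rainbow when just \emph{two} pairs in $\br P;2;$ share a colour, so colour classes of size $2,\dots,k-1$ also destroy rainbowness; hence ``$P$ is $f$-rainbow iff $P$ is $\hcal$-independent'' and the equivalence with countable chromatic number of $\hcal$ hold in only one direction as stated (this is repairable by taking edges from any two equally coloured pairs, but then $\hcal$ no longer matches the size-$k$ classes relevant to the negative relation, so the two clauses genuinely live on different hypergraphs). Second, your structural observations are correct and worth keeping: an uncountable rainbow set does refute $\oo\nar^*[(\oo;\oo)]_{k-bdd}$ for $k\ge 2$, so the two target models are incompatible, and consequently $\qcal$ must lose its c.c.c.-ness in $V^{\pcal}$; this matches the paper, where the same observation is implicit. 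But these correct observations frame the problem rather than solve it: as submitted, both the construction of $f$ and the indestructibility argument are specifications of what a proof would need, and the one concrete mechanism you propose for the hard half (precaliber plus reflection) does not work.
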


\section{On a  problem of Erd\H os and Hajnal.}

To formulate our results we need to introduce some notations.
Given two functions $f:\br X;2;\to C$ and $d:\br Y;2;\to C$ we say
that {\em $d$ can be embedded into $f$},
($d\Rightarrow f$, in short), iff there is a one-to-one map 
$\Phi:Y\to X$ such that $d(\{y,y'\})=f(\{\Phi(y),\Phi(y')\})$
for each $\{y,y'\}\in \br Y;2;$.

Hajnal, \cite{H}, proved that 
it is consistent with GCH that there is a colouring
establishing ${\omega}_2\nar (\oo\dplus
{\omega})^2_2$. As it turns out, his argument gives following stronger result:
\begin{proposition}\label{pr:con}
It is consistent that GCH holds and there is a function
$f:\br {\oot};2;\to \oo$ establishing
$\oot\nar [(\oo;{\omega})]^2_{\oo}$.
\end{proposition}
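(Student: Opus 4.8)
The plan is to recast the partition relation as a statement about thin ``colour‑avoiding'' sets, and then to build $f$ directly inside a model of GCH rather than by forcing.

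\smallskip
\noindent\emph{Step 1 (reduction).} Fix a candidate $f:\br\oot;2;\to\omega$. For $B\subseteq\oot$ of order type $\omega$, with $\gamma=\sup B$, and for $k\in\omega$, set
\[
E(B,k)=\{a<\gamma:\ f(\{a,b\})\ne k\ \text{for every }b\in B\text{ with }a<b\}.
\]
I claim that $f$ establishes $\oot\nar[(\oo;\omega)]^2_\omega$ if and only if every $E(B,k)$ is countable. Indeed, $f$ fails to establish the relation exactly when there are $A$ of order type $\oo$ and $B$ of order type $\omega$ with $\sup A\le\sup B$ and a colour $k$ omitted on $[A;B]$, i.e.\ with $A\subseteq E(B,k)$. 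The point is that such a witness $A$ is automatically bounded in $\gamma$: since $A$ has order type $\oo$ its supremum has cofinality $\oo$, while $\operatorname{cf}(\gamma)=\omega$, so $\sup A\le\gamma$ forces $\sup A<\gamma$. Hence a witness exists iff $E(B,k)\subseteq\gamma$ already contains an initial segment of order type $\oo$, i.e.\ iff $E(B,k)$ is uncountable. (Restricting to $B$ of order type $\omega$ is harmless: thinning $B$ to a cofinal $\omega$‑subsequence only shrinks $[A;B]$ and enlarges $E(B,k)$.) So it suffices to produce, consistently with GCH, a colouring all of whose sets $E(B,k)$ are countable.

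\smallskip
\noindent\emph{Step 2 (why this must be a ground‑model construction).} The two obvious forcings both fail, and this dictates the strategy. Adding $f$ by finite partial colourings makes every $E(B,k)$ empty — for fixed $a$ the conditions colouring some $\{a,b\}$ ($b\in B$, $b>a$) by $k$ are dense, as $B$ has infinitely many elements above $a$ while conditions are finite — but this poset adds $\aleph_2$ Cohen reals and destroys CH. Adding $f$ by countable partial colourings is $\sigma$‑closed and preserves GCH, but a density computation shows it forces each $E(B,k)$ to be \emph{un}countable, since for every $\xi<\oo$ the conditions committing at least $\xi$ ordinals to avoid $k$ against $B$ are dense. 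Thus genericity cannot thin $E(B,k)$ while keeping CH, and $f$ must be defined outright from ground‑model structure.

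\smallskip
\noindent\emph{Step 3 (the construction and the main obstacle).} I would work in a fixed GCH model (for definiteness $L$) and define $f$ by recursion on the larger coordinate $b$, fixing the column $a\mapsto f(\{a,b\})$ at stage $b$. Under GCH the relevant requirements $(B,k)$ number $\aleph_2$, and each is addressed through the infinitely many stages $b\in B$ it involves; for a single $B$ one splits its $\omega$ stages among the $\omega$ colours so that, along the stages allotted to $k$, every large enough $a<\sup B$ is eventually given the value $k$, which would make $E(B,k)$ empty for that $B$ \emph{in isolation}. The genuine difficulty — and the heart of Hajnal's argument — is the clash arising because a single $b$ lies in $\aleph_2$ many sets $B$ with incompatible demands on the one value $f(\{a,b\})$, so naive scheduling cannot serve all requirements at once. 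I expect this to be the main obstacle, and I would resolve it exactly as Hajnal does: drive the definition off a coherent system of injections $e_\beta:\beta\to\oo$ ($\beta<\oot$) and colour $\{a,b\}$ by an oscillation‑type invariant read from $e_b$ on the interval determined by $a$; coherence renders the demands of different $B$'s compatible and lets one verify, uniformly in $B$, that for each $k$ all but countably many $a<\sup B$ meet colour $k$ along the tail of $B$.

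\smallskip
\noindent\emph{Step 4 (conclusion).} The construction takes place entirely inside the GCH model and produces $f$ without forcing, so GCH is retained; by Step 1 the countability of every $E(B,k)$ gives that $f$ establishes $\oot\nar[(\oo;\omega)]^2_\omega$, which is the proposition.
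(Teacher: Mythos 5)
Your Step 1 is sound --- it is essentially the paper's Lemma \ref{lm:equiv} --- but the argument collapses at Steps 2 and 3. The inference in Step 2 is a non sequitur: from the failure of the two \emph{naive} forcings you conclude that $f$ ``must be defined outright from ground-model structure,'' yet Hajnal's actual argument, which is the paper's proof, \emph{is} a forcing. The conditions are triples $\<c,\acal,{\xi}\>$ where $c$ is a countable partial colouring, $\acal$ is a countable family of countable subsets of $\supp(c)$, and ${\xi}<\oo$; the ordering demands that every ordinal ${\beta}$ \emph{newly} entering the support below $\min A$, for $A\in\acal$, realize all colours below ${\xi}$ on $[\{{\beta}\},A]$. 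These side conditions are precisely what defeats your density computation: your observation that ``committing ${\xi}$ ordinals to avoid $k$ against $B$'' is dense is correct for the plain countable-conditions poset, but in Hajnal's poset, once a condition has placed $B$ into $\acal$ with ${\xi}>k$, no ordinal later entering the support below $\min B$ may omit colour $k$ against $B$, so $E(B,k)$ is trapped inside the countable support of that condition. The resulting poset is ${\sigma}$-complete and ${\omega}_2$-c.c., hence preserves GCH. So the dichotomy driving your whole strategy is false.

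More seriously, Step 3 --- where all the mathematical content lives --- is not a proof. You correctly isolate the central difficulty (a single $b$ lies in $\aleph_2$ many sets $B$ with conflicting demands on $f(\{a,b\})$), and then assert, without any construction, that it is resolved by ``a coherent system of injections $e_{\beta}:{\beta}\to\oo$ and an oscillation-type invariant'': no colouring is defined, no verification that the sets $E(B,k)$ are countable is given, and no argument is offered that a coherent system with the required properties exists under GCH. The attribution is also wrong: Hajnal does not proceed this way, so you cannot borrow his resolution --- there is no known ground-model construction to appeal to. Note finally that your plan, if it could be carried out, would establish the much stronger statement that GCH (or at least $V=L$) outright implies the existence of such an $f$, a claim the paper conspicuously does not make and which nothing in your sketch substantiates. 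As it stands, the proposal reduces the proposition to an unproved assertion.
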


Since Hajnal's proof was never published we sketch his argument.

\begin{proof}[Proof of Proposition \ref{pr:con}]
Define a poset $\pcal=\<P,\le\>$ as follows.
The underlying set $P$ consists of triples 
$\<c,\acal,{\xi}\>$ where  $c:\br \supp (c);2;\to {\omega}$
for some $\supp (c)\in \br \oot;{\omega};$, 
$\acal\subs \br \supp(c);{\omega};$ is a countable family and ${\xi}\in \oo$.

Put $\<d,\bcal,{\zeta}\>\le \<c,\acal,{\xi}\>$ iff 
\begin{enumerate}[(P1)]
 \item $c\subset d$, $\acal\subset \bcal$, ${\xi}\le {\zeta}$, 
  \item  for each $A\in\acal$ and 
for each ${\beta}\in (\supp  (d)\setm \supp (c))\cap \min A$
\begin{displaymath}
{\xi}\subs d''[\{{\beta}\},A].
\end{displaymath}
\end{enumerate}
Then $\pcal$ is a ${\sigma}$-complete, ${\omega}_2$-c.c. poset
and if $\gcal$ is the generic filter for $\pcal$
then $g=\cup\{c:\<c,\acal,{\xi}\>\in\gcal\}$
establishes $\oot\nar [(\oo;{\omega})]^2_{\oo}$
in $V[\gcal]$.
\end{proof}

Proposition \ref{pr:con} validates 
the following question of 
Erd\H os and Hajnal, {\cite[Problem 6.b]{EH2}}:
{\em Assume that a graph $G$ establishes 
${\omega}_2\nar (\oo\dplus{\omega})^2_2$. 
Do all graphs of cardinality $\aleph_1$ embed into 
$G$?}

To answer this question in the negative 
we prove a preservation theorem which makes us possible to apply 
Shelah's method  from \cite[theorem 4.1]{Sh}.

\begin{theorem}\label{tm:cind}
If ${\mu}\le \oo$
and $c$ establishes $\oot\nar [(\oo;{\omega})]^2_{\mu}$ then
$V^{Fn({\kappa},2)}\models$ 
``$c$ establishes $\oot\nar [(\oo;{\omega})]^2_{\mu}$. 
\end{theorem}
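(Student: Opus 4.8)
The plan is to argue by contradiction and manufacture, inside $V$, a ``bad'' configuration contradicting that $c$ establishes $\oot\nar[(\oo;{\omega})]^2_{\mu}$ there, following the strategy of Shelah \cite[Theorem 4.1]{Sh}. Write $P=\operatorname{Fn}({\kappa},2)$ and suppose some $p\in P$ forces that $\dot A$ has order type $\oo$, $\dot B$ has order type ${\omega}$, $\sup\dot A\le\sup\dot B$, and that some colour ${\xi}<{\mu}$ is omitted on the crossing pairs, i.e. $p\force {\xi}\notin c''[\dot A;\dot B]$. Two reductions come first. Since $P$ is the finite-support product of copies of Cohen forcing it is c.c.c., so the name $\dot B$ for a countable set depends on only countably many coordinates: there is a countable $u\subs{\kappa}$ and (after strengthening $p$) we may take $\dot B$ to be an $\operatorname{Fn}(u,2)$-name, where $Q:=\operatorname{Fn}(u,2)$ is a single copy of Cohen forcing. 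Likewise the whole configuration is captured by $\oo$ many coordinates, so it suffices to treat $P=\operatorname{Fn}(\oo,2)$, which factors as $P\cong Q\times R$ with $R=\operatorname{Fn}(\oo\setm u,2)$.

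Next I would fix an elementary submodel $M\prec H({\theta})$ with $|M|=\oo$, $\oo\subs M$, and $\{c,P,p,\dot A,\dot B,{\xi},u\}\subs M$, and set ${\delta}:=M\cap\oot\in\oot$. For each ${\eta}<\oo$ the name for the ${\eta}$-th element of $\dot A$ lies in $M$, and the countable antichain deciding it is a subset of $M$ (as $\oo\subs M$); hence every value it can take lies in $M\cap\oot={\delta}$. The same applies to $\dot B$, so $p\force \dot A\cup\dot B\subs{\delta}$. Thus the entire bad configuration is pushed below the fixed ground-model ordinal ${\delta}$, and the crucial uniform consequence of $p\force{\xi}\notin c''[\dot A;\dot B]$ is the following absolute statement in $V$: whenever a single condition $q\le p$ forces both ${\alpha}\in\dot A$ and ${\beta}\in\dot B$ with ${\alpha}<{\beta}$, the ground-model value $c(\{{\alpha},{\beta}\})$ is not ${\xi}$.

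Now comes the amalgamation. For each ${\eta}<\oo$ choose a condition $q_{\eta}\le p$ and an ordinal $a_{\eta}<{\delta}$ with $q_{\eta}\force a_{\eta}\in\dot A$, the $a_{\eta}$ strictly increasing. Since each restriction $q_{\eta}\restriction u$ is a finite partial function on $u$ and there are only countably many such, a pigeonhole on $\oo$ yields an uncountable $I\subs\oo$ and a single $s^{*}\in Q$ with $q_{\eta}\restriction u=s^{*}$ for all ${\eta}\in I$. Working in $V$ below $s^{*}$, I would build a descending ${\omega}$-chain in $Q$ meeting the countably many dense sets of $M$ that decide the enumeration of $\dot B$; this produces a genuine ground-model set $B'=\{b'_{n}:n<{\omega}\}$ of order type ${\omega}$, each $b'_{n}$ forced into $\dot B$ by some $s_{n}\le s^{*}$. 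Because $s_{n}$ and $q_{\eta}$ (${\eta}\in I$) agree on $u$, the amalgam $q_{\eta}\cup s_{n}$ is a legitimate condition forcing simultaneously $a_{\eta}\in\dot A$ and $b'_{n}\in\dot B$; by the absolute statement above, $c(\{a_{\eta},b'_{n}\})\ne{\xi}$ for every crossing pair $a_{\eta}<b'_{n}$. Hence $A':=\{a_{\eta}:{\eta}\in I\}$ and $B'$ are ground-model sets of order types $\oo$ and ${\omega}$ on which ${\xi}$ is omitted.

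The main obstacle is the order condition $\sup A'\le\sup B'$. Because $A'$ has type $\oo$ its supremum has cofinality $\oo$, while $\sup B'$ has cofinality ${\omega}$; in the true extension one automatically has $\sup\dot A<\sup\dot B$, but for the $\sigma$-generically realised $B'$ there is no a priori control of $\sup B'$ relative to $\sup A'$. Resolving this is the heart of the argument: I would interleave the two constructions, arranging the descending chain in $Q$ below $s^{*}$ so as also to meet, cofinally often, the (countably many, since the relevant cofinality is ${\omega}$) dense sets that push $\sup\dot B$ above the $a_{\eta}$ already selected, while keeping all supports disjoint from $R$ so the amalgamation survives. Once $\sup A'\le\sup B'$ is secured, $(A',B')$ witnesses $\oot\nar[(\oo;{\omega})]^2_{\mu}$ failing in $V$, contradicting the hypothesis and completing the proof.
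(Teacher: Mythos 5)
Your reductions and your amalgamation engine are sound and match the paper's: $\dot B$ becomes a name on a countable block $u$ of coordinates, the pigeonhole fixes $s^{*}=q_{\eta}\restriction u$ on an uncountable $I$, and the amalgam $q_{\eta}\cup s_{n}$ correctly transfers the forced statement ${\nu}\notin c''[\dot A;\dot B]$ to the ground-model pairs $\{a_{\eta},b'_{n}\}$. (The elementary submodel $M$ and the ordinal ${\delta}$ do no work in your argument and can be deleted. The paper in fact skips both the pigeonhole and the chain construction: it forms the countable family $\bcal=\{B(q):q\in Fn({\omega},2),\,q\le p\}$ of sets of \emph{possible} values of $\dot B$ and invokes the combinatorial reformulation of ``establishes'' (Lemma \ref{lm:equiv}(3)) to find a single possible value ${\alpha}$ of $\dot A$ realising the colour against every $B(q)$ simultaneously, after which one amalgamation finishes; your realised-$B'$ variant with uncountably many $a_{\eta}$ would serve the same purpose.)

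The genuine gap sits exactly at what you call the heart of the argument. You correctly compute that $\sup A'$ has cofinality $\oo$ while $\sup B'$ has cofinality ${\omega}$, but your fix --- meeting ``cofinally often'' the countably many dense sets pushing elements of $\dot B$ above the $a_{\eta}$ already selected --- cannot work: a countable descending chain meets only countably many such sets, so it drives $\sup B'$ up to the supremum of countably many $a_{\eta}$'s, which is strictly below $\sup A'$ precisely because $\operatorname{cf}(\sup A')=\oo$. What you would actually need is a single condition $t\le s^{*}$ forcing one $b\in\dot B$ with $b\ge\sup A'$, and you give no argument that such $t,b$ exist; your parenthetical ``the relevant cofinality is ${\omega}$'' contradicts your own correct computation one sentence earlier. (Such a $b$ can in fact be extracted --- any condition whose $u$-part is $t$ forces $\dot B\subs B(t)$, amalgamation with each $q_{\eta}$ gives $\sup B(t)\ge\sup A'$, and countability of $B(t)$ plus the cofinality dichotomy does the rest --- but this is extra work you did not do.) The paper kills the problem before it arises: since $\dot A$ is forced to have type $\oo$ and $\dot B$ type ${\omega}$, the cofinality mismatch forces $\sup\dot A<\sup\dot B$ in the extension, so after passing to a tail of $\dot B$ and strengthening $p$ once, one fixes a \emph{ground-model} ordinal ${\xi}$ with $p\force\dot A\subs\check{\xi}\land\dot B\subs\oot\setm\check{\xi}$; this is exactly the paper's opening line ``we can choose ${\xi}\in\oot$, ${\nu}\in{\mu}$, $p$ and names $\abar$, $\bbar$\dots''. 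With that separation your construction closes with no ordering issue at all: $A'\subs{\xi}$, $B'\subs\oot\setm{\xi}$, every pair is an increasing crossing pair, and $(A',B')$ contradicts the relation in $V$ via Lemma \ref{lm:equiv}(2). Insert that one-line reduction at the start and delete the interleaving; as written, your final step fails.
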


\begin{proof}

The following lemma is straightforward.
\begin{lemma}\label{lm:equiv}
Let ${\mu}\le \oo$ and $c:\br \oot;2;\to {\mu}$.
The followings are equivalent:
\begin{enumerate}[(1)]
\item $c$ establishes $\oot\nar [(\oo;{\omega})]^2_{\mu}$,
\item $\forall B\in \br \oot;{\omega};$ $\forall {\nu}\in {\mu}$
  \begin{displaymath}
   |\bigr\{{\alpha}<\min B: {\nu}\notin c''[\{{\alpha}\}, B]\bigl\}|\le {\omega},
  \end{displaymath}
\item
$\forall \bcal\in \br {\brsmall \oot;{\omega};};{\omega};$
$\forall {\nu}\in {\mu}$
  \begin{displaymath}
   |\bigl\{{\alpha}<\min \cup\bcal:
\exists B\in \bcal\ {\nu}\notin 
c''[\{{\alpha}\}, B] \bigr\}|\le {\omega}. 
  \end{displaymath}
\end{enumerate}
\end{lemma}

Assume on the contrary that the theorem fails.
We can assume that we add just $\oo$ many Cohen reals to $V$, 
i.e. ${\kappa}=\oo$.
We can choose
 ${\xi}\in \oot$, ${\nu}\in {\mu}$, $p\in Fn(\oo,2)$ and names
$\abar$ and $\bbar$ such that 
\begin{displaymath}
p\force \abar\in \br {\xi};\oo;\land  \bbar\in \br \oot\setm {\xi};{\omega};
\land {\nu}\notin c''[\abar,\bbar].
\end{displaymath}
  
We can assume that $\bbar\in V^{Fn({\omega},2)}$ and $\dom p\subs {\omega}$.
For each $q\in Fn({\omega},2)$ with $q\le p$ put
\begin{displaymath}
B(q)=\{{\zeta}:\exists r\in Fn({\omega},2)\ r\le q \land 
r\force {\zeta}\in \bbar\}.  
\end{displaymath}
Let $\bcal=\{B(q): q\in Fn({\omega},2), q\le p\}$ and  
$A'=\{{\alpha}\in \oot:\exists r\le p\ r\force {\alpha}\in \abar\}$.
Then $A'\in \br {\xi};\oo;$ and 
$\bcal\in \br {\brsmall \oot\setm {\xi};{\omega};};{\omega};$.
Hence, by lemma \ref{lm:equiv},  there is ${\alpha}\in A'$ such that 
${\nu}\in c''[\{{\alpha}\}, B(q)]$ for each 
$q\in Fn({\omega},2)$ with $q\le p$.
Pick $s\in Fn(\oo,2)$ with $s\force {\alpha}\in \abar$.
Then ${\nu}\in c''[\{{\alpha}\}, B(s\restriction {\omega})]$, i.e.
there is ${\beta}\in \oot \setm {\xi}$ and $r\in Fn({\omega},2)$
such that $r\le s\restriction {\omega}$ and $r\force {\beta}\in \bbar$.
Then 
\begin{displaymath}
s\cup r\force {\alpha}\in \abar\land {\beta}\in \bbar \land
{\nu}\notin c''[\abar,\bbar],  
\end{displaymath}
but $c({\alpha},{\beta})={\nu}$. Contradiction.
\end{proof}

\begin{theorem}\label{tm:ellenpelda}
For $2\le {\mu}\le \oo$
it is consistent that GCH holds and there is a colouring
$f:\br {\oot};2;\to {\mu}$ establishing
$\oot\nar [(\oo;{\omega})]^2_{\mu}$
such that $g\not\Rightarrow f$ for some colouring $g:\br \oo;2;\to 2$.
  \end{theorem}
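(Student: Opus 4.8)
The plan is to run Shelah's strategy from \cite[Theorem 4.1]{Sh} on top of the preservation Theorem \ref{tm:cind}: I would start in a model of GCH that already carries a rich colouring, then Cohen-force a \emph{generic} $2$-colouring $g$ of $\br\oo;2;$, observing that $f$ stays rich by Theorem \ref{tm:cind} while a generic $g$ is automatically too complicated to embed into $f$.

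First, by Proposition \ref{pr:con} fix a ground model $V\models\text{GCH}$ together with a colouring $f_0:\br\oot;2;\to\oo$ establishing $\oot\nar[(\oo;{\omega})]^2_{\oo}$. To get down to $\mu$ colours I would compose with any surjection $\pi:\oo\to\mu$ and set $f=\pi\circ f_0$. Using the characterisation of Lemma \ref{lm:equiv}(2), for $\nu<\mu$ choose $\xi_\nu\in\pi^{-1}\{\nu\}$; then $\xi_\nu\in f_0''[\{\alpha\},B]$ implies $\nu\in f''[\{\alpha\},B]$, so the ``bad'' set $\{\alpha<\min B:\nu\notin f''[\{\alpha\},B]\}$ is contained in the countable bad set for $\xi_\nu$. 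Hence $f$ establishes $\oot\nar[(\oo;{\omega})]^2_{\mu}$ in $V$, and since $\mu\ge2$ it takes at least the values $0$ and $1$.

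Next, force with $\mathbb C$, the poset of finite partial functions $p:\br\oo;2;\to2$, and let $g=\bigcup G$ be the generic colouring. As $\mathbb C$ is c.c.c.\ of cardinality $\oo$ and $V\models\text{GCH}$, a routine nice-name count gives $2^{\aleph_0}=\aleph_1$ and $2^{\aleph_1}=\aleph_2$ in $V[g]$, so GCH is preserved; and since $\mathbb C\cong Fn(\oo,2)$, Theorem \ref{tm:cind} (applied with $\kappa=\oo$ and ${\mu}\le\oo$) shows that $f$ still establishes $\oot\nar[(\oo;{\omega})]^2_{\mu}$ in $V[g]$.

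The heart of the argument, and the step I expect to be the main obstacle, is to verify $g\not\Rightarrow f$ in $V[g]$, since an embedding need not lie in $V$. Suppose toward a contradiction that $p_0\force$ ``$\dot\Phi:\oo\to\oot$ is injective and $\dot g(\{x,y\})=f(\{\dot\Phi(x),\dot\Phi(y)\})$ for all $x<y$''. For each $\alpha<\oo$ pick $q_\alpha\le p_0$ and $\phi(\alpha)<\oot$ with $q_\alpha\force\dot\Phi(\alpha)=\phi(\alpha)$, and let $S_\alpha=\{\alpha\}\cup\bigcup\dom q_\alpha$, a finite set of ordinals. By the $\Delta$-system lemma, together with a pigeonhole thinning (there are only finitely many functions on $\br R;2;$) and discarding the finitely many $\alpha$ in the root, I may pass to an uncountable $I\subseteq\oo$ such that $\{S_\alpha:\alpha\in I\}$ forms a $\Delta$-system with root $R$, all $q_\alpha\restriction\br R;2;$ coincide, and $\alpha\notin R$ for $\alpha\in I$. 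Fix any $\alpha<\beta$ in $I$. Their conditions are compatible, since $\dom q_\alpha\cap\dom q_\beta\subseteq\br R;2;$ where they agree, so $q=q_\alpha\cup q_\beta\force\dot\Phi(\alpha)=\phi(\alpha)\wedge\dot\Phi(\beta)=\phi(\beta)$; moreover $\{\alpha,\beta\}\notin\dom q$, because $\beta\notin S_\alpha$ and $\alpha\notin S_\beta$. Thus I may extend $q$ to $r$ by declaring $r(\{\alpha,\beta\})=c$ for some $c\in2\setminus\{f(\{\phi(\alpha),\phi(\beta)\})\}$. Now $r\le p_0$ forces $\dot g(\{\alpha,\beta\})=c$ and also $\dot g(\{\alpha,\beta\})=f(\{\dot\Phi(\alpha),\dot\Phi(\beta)\})=f(\{\phi(\alpha),\phi(\beta)\})\ne c$, which is absurd. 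Hence no condition forces $\dot\Phi$ to be an embedding, so in $V[g]$ no injective map witnesses $g\Rightarrow f$. This yields a model of GCH with $f$ establishing $\oot\nar[(\oo;{\omega})]^2_{\mu}$ and $g\not\Rightarrow f$; note that the richness of $f$ is used only to make the statement meaningful (it is supplied by Proposition \ref{pr:con} and preserved by Theorem \ref{tm:cind}), while it is the $\Delta$-system diversion that actually defeats every, possibly new, embedding by pinning down two values of $\dot\Phi$ while leaving the corresponding pair of $g$ free.
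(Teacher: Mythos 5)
Your proposal is correct, and its outer scaffolding coincides with the paper's: both start from Proposition \ref{pr:con}, pass from $\oo$ colours to $\mu$ colours by composing with a surjection $\pi_\mu$ (the paper asserts this reduction without proof; your verification through Lemma \ref{lm:equiv}(2), comparing the bad set for $\nu$ with the bad set for a chosen $\xi_\nu\in\pi^{-1}\{\nu\}$, is exactly the intended one), and both rely on Theorem \ref{tm:cind} to keep $f_\mu$ establishing $\oot\nar [(\oo;{\omega})]^2_{\mu}$ after Cohen forcing, with GCH preserved by a routine nice-name count. The genuine divergence is at the non-embeddability step. The paper adds a \emph{single} Cohen real and quotes Shelah's \cite[Theorem 4.1]{Sh}(b) as a black box: already in $V^{Fn({\omega},2)}$ there is a colouring $d:\br \oo;2;\to 2$ with $d\not\Rightarrow f_\mu$. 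You instead force with finite partial functions from $\br \oo;2;$ to $2$ (isomorphic to $Fn(\oo,2)$, i.e.\ $\aleph_1$ Cohen coordinates), take $g$ to be the generic colouring itself, and kill every potential embedding by hand: the $\Delta$-system on the sets $S_\alpha=\{\alpha\}\cup\bigcup\dom q_\alpha$, the pigeonhole on $q_\alpha\restriction\br R;2;$, and the observation that $\{\alpha,\beta\}\notin\dom(q_\alpha\cup q_\beta)$ are all correct, and leaving that one pair undetermined lets you colour it to contradict the forced equation $\dot g(\{\alpha,\beta\})=f(\{\phi(\alpha),\phi(\beta)\})$; note also that $2\setm\{f(\{\phi(\alpha),\phi(\beta)\})\}\ne\empt$ needs nothing about $\mu$ beyond $f$ being a function. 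What each route buys: yours is self-contained --- you in effect reprove the special case of Shelah's theorem that is actually needed, and your argument even shows the generic $g$ embeds into \emph{no} ground-model colouring whatsoever --- while the paper's citation yields the sharper and subtler fact that one Cohen real over ${\omega}$ already produces such a $g$ on $\oo$. The difference in the amount of forcing is immaterial here, since Theorem \ref{tm:cind} is proved for arbitrary $\kappa$ (its proof reduces precisely to $\kappa=\oo$, your case), and your brief gloss over $2^{\kappa}$ for $\kappa\ge\aleph_2$ in the GCH count is standard for a c.c.c.\ poset of size $\aleph_1$.
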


\begin{proof}
By proposition \ref{pr:con} we can assume that in the ground model
GCH holds and there is a function    
$f:\br {\oot};2;\to \oo$ establishing
$\oot\nar [(\oo;{\omega})]^2_{\oo}$.

If ${\mu}\le \oo$ and ${\pi}_{\mu}:\oo\rightarrow {\mu}$ is onto
then $f_{\mu}={\pi}_{\mu}\circ f$
establishes $\oot\nar [(\oo;{\omega})]^2_{\mu}$.

Then, by \cite[Theorem 4.1]{Sh}, in $V^{Fn({\omega},2)}$
there is a function $d:\br \oo;2;\rightarrow 2$ such that
$d\not\Rightarrow f_{\mu}$.

Since
\begin{displaymath}
\text{$V^{Fn({\omega},2)}\models$  
{\em  $f_{\mu}$
establishes $\oot\nar [(\oo;{\omega})]^2_{\mu}$}}
\end{displaymath}  
 by theorem \ref{tm:cind}, we are done.
\end{proof}

As it was observed by Hajnal, the construction of theorem
\ref{tm:ellenpelda} above left open 
the following question which he raised in \cite[Problem 4.1]{H2}:
\begin{qproblem}
Assume  GCH holds and a colouring
$c:\br {\oot};2;\to \oo$ establishes
$\oot\nar [(\oo;{\omega})]^2_{\oo}$.
Does there exist a $c$-rainbow set of size $\oo$?
\end{qproblem}

Before answering this question let us recall some
positive results of  Hajnal. In  \cite{H2}, he proved  that  
\begin{qtheorem}
(1) If $f: \br \oo;2; \to \oo$ establishes 
$\oo\nar [({\omega},\oo)]^2_{\oo} $ 
then $d \Rightarrow f$ for each  $d:\br {\omega};2;\to \oo$.\\
(2)  If $f :\br \oo;2;\to{\omega}$ establishes
  $\oo\nar[(\oo,\oo)]^2_{\omega}$ then there exists an infinite 
$f$-rainbow  set.
\end{qtheorem}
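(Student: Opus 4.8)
The plan is to read ``$f$ establishes the relation'' as a \emph{positive} realization principle, exactly as Lemma~\ref{lm:equiv} does, and then to run a one-point-extension construction. For part~(1) the analogue of clause~(2) of Lemma~\ref{lm:equiv} should read: $f$ establishes $\oo\nar[({\omega},\oo)]^2_{\oo}$ iff for every uncountable $B\subs\oo$ and every colour ${\nu}\in\oo$ the set $\{{\alpha}<\min B:{\nu}\notin f''[\{{\alpha}\},B]\}$ is \emph{finite}; the exceptional set is finite rather than merely countable precisely because the first coordinate of the relation is ${\omega}$, not $\oo$. For part~(2) no set of order type $\oo$ lies entirely below another inside $\oo$, so the clean reformulation is instead \emph{rectangle surjectivity}: $f$ establishes $\oo\nar[(\oo,\oo)]^2_{\omega}$ iff for all uncountable $A,B\subs\oo$ and every ${\nu}\in{\omega}$ there are ${\alpha}\in A$, ${\beta}\in B$ with ${\alpha}<{\beta}$ and $f(\{{\alpha},{\beta}\})={\nu}$.

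For part~(1) I would construct the embedding ${\Phi}:{\omega}\to\oo$ by recursion, keeping ${\Phi}(0)<{\Phi}(1)<\cdots$ and maintaining a decreasing chain of uncountable reservoirs $\oo=R_{-1}\sups R_0\sups R_1\sups\cdots$ from which all later values are drawn. At stage $n$ the point ${\Phi}(n)$ must satisfy $f(\{{\Phi}(i),{\Phi}(n)\})=d(\{i,n\})$ for every $i<n$, i.e.\ it must lie in the intersection of the prescribed colour-fibres of the finitely many earlier points. The finite-exception principle is what should make these fibres abundant: if an earlier point is non-exceptional for uncountably many blocks of a partition of the current reservoir into uncountable pieces, then the colour it must realize is in fact realized on an \emph{uncountable} subreservoir. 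The technical heart — and the step I expect to be the main obstacle — is to realize all $n$ prescribed colours \emph{simultaneously} at one point while leaving an uncountable reservoir for the future; the delicate point is not that each individual fibre is large but that their finite intersection stays uncountable. I would attack this by choosing the earlier ${\Phi}(i)$ themselves to be non-exceptional for every one of the countably many reservoir/colour pairs that will ever be needed. A convenient bookkeeping device is to fix a countable $M\prec H({\theta})$ with $f,d\in M$ and ${\delta}=M\cap\oo$: since every exceptional set is finite it lies below ${\delta}$, so the set of common witnesses is coded in $M$ and, once shown nonempty, automatically meets $M$ and may be continued.

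Part~(2) is different in character, and its obstacle is sharper: rectangle surjectivity controls only pairs of \emph{uncountable} sets and says nothing about a single point, so a lone ${\alpha}$ may send one colour to uncountably many larger ${\beta}$ and thereby stall any greedy rainbow recursion. I would therefore argue by contraposition: assuming there is \emph{no} infinite $f$-rainbow set I will exhibit uncountable $A,B$ with $\sup A\le\sup B$ on which some colour is omitted, contradicting rectangle surjectivity. The starting observation is that, the range being countable, every ${\alpha}$ has a nonempty set of \emph{dominant upward colours} $\{n:|\{{\beta}>{\alpha}:f(\{{\alpha},{\beta}\})=n\}|=\oo\}$, and that on every infinite set the canonical Ramsey theorem produces an infinite subset on which $f$ is constant, min-based, or max-based, the rainbow pattern being forbidden by hypothesis. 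The plan is to refine $\oo$ to an uncountable set on which the dominant-colour data is constant, and then to use the tree of finite increasing rainbow sequences — which is well-founded, having no infinite branch — together with a pressing-down argument to locate a fixed colour that cannot occur between the two halves of a suitable splitting. Converting the purely \emph{local} absence of rainbows into this \emph{uniform, uncountable} colour-omission is the main difficulty, and is where I expect the bulk of the work to lie.
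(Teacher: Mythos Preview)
The paper does not prove this statement: it is quoted from Hajnal's paper~\cite{H2} as background (``let us recall some positive results of Hajnal''), and no argument is given here. There is therefore no ``paper's own proof'' to compare your proposal against.

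As a brief remark on the proposal itself: your reading of the hypothesis in part~(1) and the one-point-extension scheme with shrinking uncountable reservoirs is the natural approach and is essentially what Hajnal does in~\cite{H2}. You correctly identify the crux, namely keeping the intersection of finitely many colour-fibres uncountable; this is handled by observing that for a non-exceptional~$\alpha$ the fibre $\{\beta\in R: f(\alpha,\beta)=\nu\}$ is itself uncountable for \emph{every} uncountable $R$ above it (otherwise refine $R$ to omit $\nu$ and violate non-exceptionality), so one simply shrinks the reservoir $n$ times in succession. For part~(2) your contrapositive plan is reasonable but the sketch is vague at exactly the hard step; Hajnal's argument in~\cite{H2} is a direct construction rather than a pressing-down on the tree of rainbow attempts, so if you want a reference you should consult that paper.
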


When we colour the pairs of $\oo$
we can not expect uncountable rainbow sets 
because of the following fact.
\begin{proposition}\label{pr:ch}
If CH holds then  there is a function
$f:\br {\oo};2;\to \oo$ 
such that 
\begin{enumerate}[(1)]
\item  $f$ establishes
$\oo\nar [({\omega};\oo)]^2_{\oo}$,
\item there is no uncountable $f$-rainbow.
\end{enumerate}
\end{proposition}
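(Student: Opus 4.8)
The plan is to construct the colouring $f:\br \oo;2;\to\oo$ by recursion on its columns so that it establishes the partition relation~(1), and then to deduce~(2) as a soft consequence of~(1), so that no separate diagonalisation against rainbow sets is needed.

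First I would record the analogue of Lemma~\ref{lm:equiv}. Writing $f''[A,\{{\beta}\}]=\{f(\{{\alpha},{\beta}\}):{\alpha}\in A,\ {\alpha}<{\beta}\}$ and
\begin{displaymath}
\mathrm{Bad}(A,{\nu})=\{{\beta}<\oo:{\nu}\notin f''[A,\{{\beta}\}]\},
\end{displaymath}
a colouring $f$ establishes $\oo\nar[({\omega};\oo)]^2_{\oo}$ if and only if $\mathrm{Bad}(A,{\nu})$ is countable for every infinite $A\in\br\oo;{\omega};$ and every ${\nu}<\oo$. Indeed every uncountable $B\subs\oo$ has order type $\oo$ and satisfies $\sup A\le\sup B$, and such a $B$ omits the colour ${\nu}$ from the rectangle $[A;B]$ exactly when $B\subs\mathrm{Bad}(A,{\nu})$; hence some $({\omega};\oo)$-rectangle omits a colour precisely when some $\mathrm{Bad}(A,{\nu})$ is uncountable.

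Next I would carry out the construction. Using CH, fix an enumeration $\{(A_{\xi},{\nu}_{\xi}):{\xi}<\oo\}$ of $\br\oo;{\omega};\times\oo$, which has cardinality ${\oo}^{\omega}=\oo$ by CH. Define $f$ by recursion on ${\beta}$: having fixed the earlier columns, call ${\xi}$ \emph{active at ${\beta}$} if ${\xi}<{\beta}$ and $A_{\xi}\cap{\beta}$ is infinite, and list the (countably many) active indices as $\{{\xi}_n:n<{\omega}\}$. Greedily choose pairwise distinct witnesses ${\alpha}_n\in A_{{\xi}_n}\cap{\beta}$, which is possible since each $A_{{\xi}_n}\cap{\beta}$ is infinite, put $f(\{{\alpha}_n,{\beta}\})={\nu}_{{\xi}_n}$, and set $f(\{{\alpha},{\beta}\})=0$ for every other ${\alpha}<{\beta}$. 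For a fixed index ${\xi}$, whenever ${\beta}>{\xi}$ and $A_{\xi}\cap{\beta}$ is infinite the index ${\xi}$ is active at ${\beta}$, so its witness forces ${\nu}_{\xi}\in f''[A_{\xi},\{{\beta}\}]$; as $A_{\xi}\cap{\beta}$ is infinite for all ${\beta}\ge\sup A_{\xi}$, the set $\mathrm{Bad}(A_{\xi},{\nu}_{\xi})$ is contained in the countable ordinal $\max({\xi}+1,\sup A_{\xi})$. Since every pair $(A,{\nu})$ occurs in the enumeration, the equivalence above yields~(1).

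Finally I would deduce~(2) from~(1). If $P$ were an uncountable $f$-rainbow set, let $A$ be its first ${\omega}$ elements, ${\gamma}=\sup A$, and $B=\{p\in P:p\ge{\gamma}\}$; then $A\in\br\oo;{\omega};$, $B$ is uncountable, and every element of $A$ lies below every element of $B$. Fixing any ${\nu}<\oo$, (1) applied to the rectangle $(A,B)$ yields ${\alpha}\in A$ and ${\beta}\in B$ with $f(\{{\alpha},{\beta}\})={\nu}$, and (1) applied to $(A\setm\{{\alpha}\},B\setm\{{\beta}\})$ yields ${\alpha}'\in A\setm\{{\alpha}\}$ and ${\beta}'\in B\setm\{{\beta}\}$ with $f(\{{\alpha}',{\beta}'\})={\nu}$. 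Since ${\alpha}\ne{\alpha}'$, the pairs $\{{\alpha},{\beta}\}$ and $\{{\alpha}',{\beta}'\}$ are distinct members of $\br P;2;$ of the same colour, contradicting that $P$ is rainbow.

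The step I expect to be the main obstacle is the recursion for~(1): each requirement $(A_{\xi},{\nu}_{\xi})$ has to be met at cocountably many stages ${\beta}$, while only countably many requirements can be addressed at any single stage. This is resolved by activating a requirement only from stage ${\xi}$ onward, together with the distinct-witness choice, which never produces a clash because every relevant $A_{\xi}\cap{\beta}$ is infinite; the exceptional stages then visibly form a countable initial segment. By contrast, what might look like the hard part, namely~(2), costs nothing once~(1) is in hand: because $f$ takes $\oo$ values the absence of an uncountable rainbow set is not a mere cardinality triviality, yet it still follows from the relation by the elementary two-rectangle argument above.
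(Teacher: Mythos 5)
Your proposal is correct, and for part (1) it is essentially the paper's construction: a CH-enumeration plus a column-by-column recursion meeting countably many requirements per column. (The paper enumerates $\br \oo;{\omega};$ as $\{A_{\beta}:{\omega}\le{\beta}<\oo\}$ with $A_{\beta}\subs{\beta}$ and demands ${\alpha}\subset\{f({\xi},{\alpha}):{\xi}\in A_{\beta}\}$ for all ${\beta}<{\alpha}$, i.e.\ it covers all colours below ${\alpha}$ at once, where you meet one pair $(A_{\xi},{\nu}_{\xi})$ per index but on an end-segment of columns; both arrangements give that your set $\mathrm{Bad}(A,{\nu})$ is countable, the analogue of condition (2) of Lemma \ref{lm:equiv}.) Where you genuinely diverge is part (2): the paper kills rainbow sets inside the recursion by the extra requirement that $A_{\beta}\cup\{{\alpha}\}$ is not an $f$-rainbow for all ${\beta}<{\alpha}$, whereas you deduce (2) abstractly from (1) by the two-rectangle pigeonhole argument, and this deduction is sound: deleting one point keeps type ${\omega}$ on the left and keeps the right-hand set uncountable, hence of type $\oo$ with the same sup-condition, and ${\alpha}\ne{\alpha}'$ makes the two monochromatic pairs distinct members of $\br P;2;$. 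Your route in fact proves slightly more than the proposition states, namely that \emph{every} $f$ establishing $\oo\nar[({\omega};\oo)]^2_{\oo}$ has no uncountable rainbow set, so the paper's second requirement is redundant; the paper's route is self-contained and mirrors the later constructions where your shortcut is unavailable. Indeed, your trick works precisely because in $({\omega};\oo)$ the countable set sits on the small side, so rectangles can be shrunk inside a putative rainbow set; for the relation $\oot\nar[(\oo;{\omega})]^2_{\oo}$ of Theorem \ref{tm:con} the uncountable set sits below the countable one, an uncountable rainbow candidate of order type $\oo$ has all proper initial segments countable, and no such soft deduction exists---which is why Hajnal's Problem 4.1 is nontrivial and the paper needs the $\dcal$-machinery there. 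One cosmetic nit: your ``if and only if'' for the $\mathrm{Bad}$-set equivalence needs, in the ``only if'' direction, the harmless remark that an arbitrary countably infinite $A$ should be replaced by a subset of order type ${\omega}$ (which only enlarges $\mathrm{Bad}(A,{\nu})$); your argument uses only the easy direction, so nothing is affected.
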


\begin{proof}[Proof of proposition \ref{pr:ch}]
Enumerate $\br \oo;{\omega};$ as $\{A_{\alpha}:{\omega}\le
{\alpha}<\oo\}$
such that $A_{\alpha}\subs {\alpha}$.
By induction on ${\alpha}$, ${\omega}\le {\alpha}<\oo$,  define
 $f({\xi},{\alpha})$ for ${\xi}<{\alpha}$ such that 
\begin{enumerate}
  \item ${\alpha}\subset \{f({\xi},{\alpha}):{\xi}\in A_{\beta}\}$ for
  ${\beta}<{\alpha}$,
\item $A_{\beta}\cup \{{\alpha}\}$ is not an $f$-rainbow for
  ${\beta}<{\alpha}$.
\end{enumerate}
Then $f$ satisfies (1) and (2).
\end{proof}

Next we answer \cite[Problem 4.1]{H2}  in  the negative.
\begin{theorem}\label{tm:con}
It is consistent that GCH holds and there is a function
$g:\br {\oot};2;\to \oo$ 
such that 
\begin{enumerate}[(1)]
\item  $g$ establishes
$\oot\nar [(\oo;{\omega})]^2_{\oo}$,
\item there is no uncountable $g$-rainbow.
\end{enumerate}
\end{theorem}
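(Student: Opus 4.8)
The plan is to merge the forcing used for Proposition \ref{pr:con} with the diagonalisation idea behind Proposition \ref{pr:ch}, so that a \emph{single} generic colouring both establishes the negative relation and omits every uncountable rainbow. Concretely, I would force with a poset $\pcal$ whose conditions are quadruples $p=\<c,\acal,\bcal,{\xi}\>$, where $c:\br \supp(c);2;\to \oo$ for some $\supp(c)\in\br \oot;\omega;$, both $\acal,\bcal\subs\br \supp(c);\omega;$ are countable families, and ${\xi}\in\oo$. The ordering retains (P1) and (P2) from Proposition \ref{pr:con}, so that $\acal$ and ${\xi}$ drive the partition relation, and adds the mirror clause (P3): whenever $q\le p$ with colouring $d$, for each $B\in\bcal$ and each $\gamma\in(\supp(d)\setm\supp(c))$ with $\gamma>\sup B$, the set $B\cup\{\gamma\}$ must fail to be $d$-rainbow, witnessed say by $d(\gamma,b_0)=d(\gamma,b_1)$ for some $b_0,b_1\in B$.

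The decisive point that lets the two demands coexist is that (P2) only constrains pairs $\{\delta,\eta\}$ with $\eta>\delta$ (a new point \emph{below} a set of $\acal$), whereas (P3) only constrains pairs $\{\delta,\eta\}$ with $\eta<\delta$ (a new point \emph{above} a set of $\bcal$); these pair-sets are disjoint. Hence, when a new point $\delta$ is introduced, I can colour the upward pairs to realise all colours ${<}{\xi}$ on each relevant $A\in\acal$ exactly as in Proposition \ref{pr:con}, and independently colour the downward pairs to force a repetition on each relevant $B\in\bcal$. This gives the extension lemma and, by the same bookkeeping applied to two conditions sharing a $\Delta$-system root, the amalgamation lemma; together with a standard $\Delta$-system argument under GCH they yield that $\pcal$ is ${\sigma}$-complete and $\oot$-c.c., so cardinals and GCH are preserved. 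Establishing (1) is then verbatim Proposition \ref{pr:con}: density ensures every $A\in\br \oot;\omega;$ enters some $\acal$ and every ordinal enters some support, and (P2) with Lemma \ref{lm:equiv} shows that $g=\bigcup\{c:\<c,\acal,\bcal,{\xi}\>\in\gcal\}$ establishes $\oot\nar[(\oo;\omega)]^2_\oo$.

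For (2) I would exploit that ${\sigma}$-completeness adds no new countable subsets of $\oot$, so every countable $B\subs\oot$ already lies in the ground model. Given a name $\dot P$ and a condition $p$ forcing $\dot P$ uncountable, I use ${\sigma}$-closure to build a descending sequence below $p$ deciding an increasing sequence $\{\beta_n:n\in\omega\}\subs\dot P$, take a lower bound, extend it to place $B=\{\beta_n:n\in\omega\}$ into its $\bcal$, and then extend once more to decide some $\gamma\in\dot P$ with $\gamma>\sup B$, which is possible since co-countably many members of $\dot P$ exceed $\sup B$. By (P3) this final step forces $B\cup\{\gamma\}$ to be non-rainbow, so the resulting condition forces $\dot P$ to contain two pairs of equal colour; thus no uncountable $g$-rainbow survives. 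The main obstacle I anticipate is precisely the chain-condition and amalgamation step: one must verify that when two conditions are glued along a root, the cross pairs can be coloured to honour the (P2)- and (P3)-promises of both sides simultaneously. The disjoint-sides observation is exactly what defuses the apparent conflict, but the simultaneous diagonalisation against countably many sets $A$ (each demanding all colours below ${\xi}$) and countably many sets $B$ (each demanding a repetition) still has to be carried out with care.
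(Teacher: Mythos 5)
Your proposal is exactly the ``naive approach'' that the paper formulates and then discards at the start of its proof of Theorem \ref{tm:con}: adding a mirror clause (P3) to the \emph{order} of the poset of Proposition \ref{pr:con} destroys the $\oot$-chain condition, and your disjointness observation does not repair this. The disjointness of the (P2)-pairs (new point below a set in $\acal$) and the (P3)-pairs (new point above a set in $\bcal$) only helps when extending a \emph{single} condition, where all colours at the new point are free to be chosen. The chain-condition argument, however, requires amalgamating two isomorphic conditions $p=\<c,\acal,\bcal,{\xi}\>$ and $p'=\<c',\acal',\bcal',{\xi}'\>$ agreeing on a root $K$, and there the offending pairs are not cross pairs at all: if $B\in\bcal$ with $B\subs K$ (so that $B\in\bcal'$ as well) and ${\gamma}\in\supp(c')\setm K$, then ${\gamma}>\sup B$ and every colour $d({\gamma},b)$, $b\in B$, of any amalgam $d\sups c\cup c'$ is already fixed by $c'$. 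Nothing in your definition prevents $c'$ from being injective on $[\{{\gamma}\},B]$ with $B$ a $c'$-rainbow --- (P3) constrains only proper extensions that introduce new points, not a condition on its own support --- and in that case no amalgam can satisfy (P3) for $q\le p$, since the violating pairs lie entirely inside $c'$ and cannot be recoloured. Fixing a rainbow $B$ inside a common root and taking $\oot$ many pairwise twin conditions, each colouring the pairs between its fresh part and $B$ injectively with fresh colours, yields pairwise incompatible conditions, i.e.\ an antichain of size $\oot$. Without $\oot$-c.c.\ the preservation of cardinals and of GCH, on which both clauses (1) and (2) depend, is lost; this is precisely why the paper says the modified poset ``does not work.''

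The missing idea, supplied by the paper, is to make the anti-rainbow device \emph{structural} rather than part of the order: conditions are quadruples $\<c,\acal,{\xi},\dcal\>$ where $\dcal$ is a countable family of pairs $\<D,{\sigma}\>$ and, as part of being a condition, every ${\gamma}\in\supp(c)$ must satisfy $|\{{\delta}\in D:c({\gamma},{\delta})<{\sigma}\}|={\omega}$; the order merely demands $\dcal\subs\ecal$. This requirement survives amalgamation of twins: cross pairs into an infinite $D\setm K$ can simply be coloured $0$, and when $D\setm K$ is finite the root part of $D$ already witnesses the requirement via the twin isomorphism ${\varphi}$. Rainbows are then killed by density plus pigeonhole instead of by forcing $B\cup\{{\gamma}\}$ non-rainbow outright: given $p\force D\subs\dot X$ with $D\in\br\supp(c);{\omega};$ (available by ${\sigma}$-completeness), one strengthens $p$ by adding $\<D,(\sup\ran(c))+1\>$ to $\dcal$, which forces every point of $\dot X$ to realize some colour below a fixed countable bound against $D$, and among $\oo$ many points of $\dot X$ two must repeat a colour. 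Your verification of clause (1) and your use of ${\sigma}$-completeness for clause (2) are fine as far as they go, but the amalgamation step you flag as needing ``care'' is not merely delicate --- it is false for your poset, and the structural reformulation is exactly what is needed to make the theorem go through.
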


\begin{proof}[Proof of theorem \ref{tm:con}]
The naive approach is to try to modify
the order of the poset $P$ from the proof of proposition 
\ref{pr:con} 
by adding a condition (P3) to the definition of the order:
\begin{enumerate}[(P1)] \addtocounter{enumi}{2}
  \item  for each $A\in\acal$ and 
for each ${\beta}\in (\supp  (d)\setm \supp (c))$
the set $A\cup\{{\beta}\}$ is not a $d$-rainbow. 
\end{enumerate}
Unfortunately this approach does not work because
the modified poset does not satisfies $\oot$-c.c.
 
So we will argue in a different way.

Define the poset $P$ as follows.
The underlying set $P$ consists of quadruples 
$\<c,\acal,{\xi},\dcal\>$ where
\begin{enumerate}[(i)]
  \item $c:\br \supp (c);2;\to {\omega}$
for some $\supp (c)\in \br \oot;{\omega};$, 
\item $\acal\subs \br \supp(c);{\omega};$ is a countable family,
\item  ${\omega}\le {\xi}< \oo$,
\item  $\dcal\subs \br \supp(c);{\omega};\times \oo$ is a countable family,
\item   $\forall \<D,{\sigma}\>\in \dcal$ 
$(\forall {\gamma}\in \supp(c))$
$|\{{\delta}\in D:c({\gamma},{\delta})< {\sigma}\}|={\omega}$.
\end{enumerate}

Put $\<d,\bcal,{\zeta},\ecal\>\le \<c,\acal,{\xi},\dcal\>$ iff
\begin{enumerate}[(a)]
  \item $c\subset d$, $\acal\subset\bcal$, ${\xi}\le {\zeta}$,
$\dcal\subs \ecal$,
\item for each $A\in\acal$ and 
for each ${\beta}\in (\supp  (d)\setm \supp (c))\cap \min A$
\begin{displaymath}
{\xi}\subs d''[\{{\beta}\}, A].
\end{displaymath}
\end{enumerate}
 
Clearly $\le $ is  a partial order on $P$
and  $\pcal=\<P,\le\>$ is ${\sigma}$-complete.

\begin{lemma}
$\pcal$ is ${\omega}_2$-c.c.  
\end{lemma}

\begin{proof}[Proof of the lemma]
We say  that two conditions, $p=\<c,\acal,{\xi},\dcal\>$
and $p'=\<c',\acal',{\xi}',\dcal'\>$, are {\em twins}
iff there is an order preserving  bijection 
${\varphi}:\supp (c)\to \supp (c')$ such that 
\begin{enumerate}[(1)]
  \item $K=\supp (c)\cap \supp (c')$ is an initial segment of
both $\supp (c)$ and $\supp (c')$,
\item $K<\supp (c)\setm K<\supp (c')\setm K$,
\item $c({\xi},{\eta})=c'({\varphi}({\xi}),{\varphi}({\eta}))$ for
  each $\{{\xi},{\eta}\}\in \br \supp(c);2;$,
\item $\acal'=\{{\varphi}''A:A\in \acal\}$,
\item ${\xi}={\xi}'$,
\item $\dcal'=\{\<{\varphi}''D,{\sigma}\>:\<D,{\sigma}\>\in\dcal\}$. 
\end{enumerate}
It is enough to show that if
$p$ and $p'$ are twins then 
they have  a common extension 
$q=\<d,\bcal,{\rho},\ecal\>$.
Let $\bcal=\acal\cup\acal'$, ${\rho}={\xi}={\xi}'$
and $\ecal=\dcal\cup\dcal'$.

We should define $d({\nu},{\mu})$ for ${\nu}\in \supp (c)\setm K$
and ${\mu}\in\supp (c')\setm K$.

We enumerate all ``tasks'' as follows:
Let 
\begin{displaymath}
  \tcal_0=\{\<{\beta},A',{\zeta}\>:{\beta}\in \supp (c)\setm K,A'\in\acal',
A'\subs \supp (c')\setm K,{\zeta}<{\xi}'\},
\end{displaymath}
\begin{multline}\notag
  \tcal_1=\{\<{\gamma},\<D',{\sigma}'\>,n\>:
{\gamma}\in \supp (c)\setm
  K,\\ \<D',{\sigma}'\>\in\dcal'\setm \dcal,
|D'\setm K|={\omega}, n<{\omega}\}
\end{multline}
and
\begin{multline}\notag
  \tcal_2=\{\<{\gamma}',\<D,{\sigma}\>,n\>:
{\gamma}'\in \supp (c')\setm
  K,\\ \<D,{\sigma}\>\in\dcal\setm \dcal',
|D\setm K|={\omega}, n<{\omega}\}.
\end{multline}
Since $\tcal=\tcal_0\cup\tcal_1\cup \tcal_2$
is countable we can pick pairwise distinct 
ordinals $\{{\eta}_x:x\in \tcal\}$ such that 
\begin{enumerate}[(a)]
  \item if $x=\<{\beta},A',{\zeta}\>\in \tcal_0$ then ${\eta}_x\in A'$,
\item if $x=\<{\gamma},\<D',{\sigma}'\>,n\>\in \tcal_1$ 
then ${\eta}_x\in D'\setm  K$,
\item if $x=\<{\gamma}',\<D,{\sigma}\>,n\>\in\tcal_2$ 
then ${\eta}_x\in D\setm  K$.
\end{enumerate}
Choose a function $d:\br \supp (c)\cup\supp (c');2;\to \oo$
such that 
\begin{enumerate}[(1)]
  \item $d\supset c\cup c'$,
\item  $d({\beta},{\eta}_x)={\zeta}$ for $x=\<{\beta},A',{\zeta}\>\in
  \tcal_0$,
\item $d({\gamma},{\eta}_x)=0$ for  
$x=\<{\gamma},\<D',{\sigma}'\>,n\>\in \tcal_1$,
\item $d({\gamma}',{\eta}_x)=0$
for $x=\<{\gamma}',\<D,{\sigma}\>,n\>\in\tcal_2$.
\end{enumerate}

Let $q=\<d,\bcal,{\eta},\ecal\>$.
To show $q\in P$ we should check only condition (v).
So let $\<D,{\sigma}\>\in\ecal$ and 
${\gamma}\in \supp(d)$. Assume that 
$\<D,{\sigma}\>\in \dcal$.  (The case $\<D,{\sigma}\>\in \dcal'$
is similar.) 

If ${\gamma}\in \supp (c)$
then $d\restriction[\{{\gamma}\},D]=c\restriction[\{{\gamma}\},D]$  
so we are done.
So we can assume that ${\gamma}\in\supp (c')\setm K$.

If $D\setm K$ is finite then the set
\begin{displaymath}
  E=\{{\delta}\in D\cap K: c({\delta},{\varphi}^{-1}({\gamma}))<{\sigma}\}
\end{displaymath}
is infinite because $p\in P$ satisfies (v)
and for each ${\delta}\in E$ we have $d({\delta},{\gamma})=
c'({\delta},{\gamma})=c'({\varphi}({\delta}),{\gamma})=
c({\delta},{\varphi}^{-1}({\gamma}))<{\sigma}$.
So we can assume that $D\setm K$ is infinite.

In this case $x_n=\<{\gamma},\<D,{\sigma}\>,n\>\in \tcal_2$ for
$n\in {\omega}$,
so $d({\gamma},{\eta}_{x_n})=0<{\sigma}$ and 
$\{{\eta}_{x_n}:n\in {\omega}\}\in \br D;{\omega};$.

So $q\in P$.

It is straightforward that $q\le p$ because no instances of (b)
should be checked.

Finally we verify $q\le p'$.
Since condition (a) is clear, assume that $A'\in \acal'$
and ${\beta}\in \supp (c)\setm K$ with ${\beta}<\min A'$.
Since $\sup K<{\beta}$ we have $A'\subs \supp(c')\setm K$.
Hence for each ${\zeta}<{\xi}$ we have
$x=\<{\beta},A',{\zeta}\>\in\tcal_0$
so $d({\beta},{\eta}_x)={\zeta}$. Thus 
${\xi}\subs d''[\{{\beta}\},A']$.

This completes the proof of the lemma.  
\end{proof}

Let $\gcal$ be  the generic filter for $\pcal$
and put $g=\cup\{c:\<c,\acal,{\xi}\>\in\gcal\}$.

{\noindent \bf Claim:}
{\em 
$g$ establishes $\oot\nar [(\oo;{\omega})]^2_{\oo}$
in $V[\gcal]$.
}

Indeed,  let 
$p=\<c,\acal,{\xi},\dcal\>\in P$.
If $A\in \br \supp (c);{\omega}; $ 
and ${\eta}\in \oo$
then $p'=\<c,\acal\cup\{A\},\max ({\xi},{\eta}),\dcal\>\le
p$ and
for each $ {\beta}\in \min A \setm \supp (c)$ 
\begin{displaymath}
  p'\force 
{\eta}\subs g''[\{{\beta}\},A].
\end{displaymath}

{\noindent \bf Claim:}
{\em There is no uncountable $g$-rainbow 
set in $V[\gcal]$.}

Indeed, assume that $p_0 \force \dot X\in \br \oot;\oo;$.
Since $\pcal$ is ${\sigma}$-complete there are 
$p\le p_0$, 
$p=\<c,\acal,{\xi},\dcal\>$,
 and  $D\in \br \supp(c);{\omega};$
such that $p\force D\subs \dot X$.
Let
$p'=\<c,\acal,{\xi},
\dcal\cup\{\<D,(\sup \ran (c))+1\>\}.
\>$.
Then $p'\in P$ and $p'\le p$. Moreover 
\begin{displaymath}
p'\force \text{$\dot X$ is not a $g$-rainbow}. 
\end{displaymath}
Indeed, work in $V[\gcal]$, where $p'\in\gcal$. 
Write $X=\{{\xi}_{\nu}:{\nu}\in \oo\}$.
Then for each ${\nu}<{\omega}$ there is 
${\gamma}_{\nu}<\sup \ran (c)+1$ and ${\delta}_{\nu}\in D$ with 
$g({\delta}_{\nu}, {\xi}_{\nu})={\gamma}_{\nu}$.
Then there are ${\nu}<{\mu}<\oo$ with ${\gamma}_{\nu}={\gamma}_{\mu}$.
Then $g({\delta}_{\nu}, {\xi}_{\nu})={\gamma}_{\nu}={\gamma}_{\mu}=
g({\delta}_{\mu}, {\xi}_{\mu})$ and ${\xi}_{\nu}\ne {\xi}_{\mu}$, 
i.e. $X$ is not a $g$-rainbow.

\medskip
So, by the claims above, $g$ satisfies the requirements of the
theorem. 
\end{proof}

Baumgartner proved that if $CH$ holds, $P=Fn(\br
{\kappa};2;,\oo;\oo)$ for some cardinal 
${\kappa}\ge \oot$, and $\gcal$ is the generic filter above $P$,
then the function $g=\cup\gcal$ establishes 
$\oot\nar [(\oo,\oot)]^2_{\oo}$.
We prove a related result here.

\begin{theorem}\label{tm:bau}
If CH holds and ${\kappa}\ge \oot$ is a cardinal then 
there is a ${\sigma}$-complete, ${\omega}_2$-c.c. poset 
$P$ such that in $V^P$  there is a function
$g:\br {\kappa};2;\to \oo$ 
such that 
\begin{enumerate}[(1)]
\item  $g$ establishes
${\kappa}\nar [(\oo,\oot)]^2_{\oo}$.
\item there is no uncountable $g$-rainbow subset of ${\kappa}$.
\end{enumerate}
\end{theorem}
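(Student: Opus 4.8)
The plan is to run the argument of Theorem~\ref{tm:con} on top of a Baumgartner‑style colouring forcing. The one new wrinkle is that now \emph{both} sides of the relation, $\oo$ and $\oot$, are uncountable; so rather than storing the (countable) upper side as in Theorem~\ref{tm:con}, I store countable \emph{approximations} of the type‑$\oot$ upper side, which lets the downward ``colour‑from‑below'' demand used there be reused and amalgamated verbatim. Let $\pcal=\<P,\le\>$ consist of triples $\<c,\acal,\dcal\>$, where $c:\br\supp(c);2;\to\oo$ with $\supp(c)\in\br\kappa;\omega;$, where $\acal\subs\br\supp(c);\omega;\times\oo$ is a countable family of \emph{demands}, and where $\dcal\subs\br\supp(c);\omega;\times\oo$ is a countable family obeying clause~(v) of Theorem~\ref{tm:con}. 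Put $\<d,\bcal,\ecal\>\le\<c,\acal,\dcal\>$ iff $c\subs d$, $\acal\subs\bcal$, $\dcal\subs\ecal$ and, for every $\<B_0,\nu\>\in\acal$ and every $\beta\in(\supp(d)\setm\supp(c))\cap\min B_0$, one has $\nu\in d''[\{\beta\},B_0]$; thus the demand $\<B_0,\nu\>$ promises that each point later entering the support below $\min B_0$ acquires the colour $\nu$ on some pair into $B_0$. Exactly as in Proposition~\ref{pr:con}, $\pcal$ is $\sigma$‑complete.

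To see that $g=\bigcup\{c:\<c,\acal,\dcal\>\in\gcal\}$ establishes $\kappa\nar[(\oo,\oot)]^2_{\oo}$, fix a $p_0$ forcing $\dot B$ of type $\oot$. For each ground‑model $\gamma<\kappa$ and each $\nu<\oo$ the set of $q\le p_0$ that either force $\sup\dot B\le\gamma$ or carry a demand $\<B_0,\nu\>$ with $B_0\subs\dot B$ and $\min B_0>\gamma$ is dense: using $\sigma$‑closure one forces an infinite $B_0\subs\dot B\cap(\gamma,\kappa)$ into the support and adjoins the demand, which is free in the order since no new points appear. Now let $A,B,\nu$ be given in $V[\gcal]$ with $A$ of type $\oo$ and $B$ of type $\oot$. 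Since the relation asks $\sup A\le\sup B$ while the two suprema have cofinalities $\oo\ne\oot$, we get $\sup A<\sup B$; applying genericity at $\gamma=\sup A$ therefore lands in the second alternative and yields a demand $\<B_0,\nu\>\in\acal$ with $B_0\subs B$ and $\min B_0>\sup A$. Only countably many points predate this demand, so some $\alpha\in A$ enters the support afterwards, and since $\alpha\le\sup A<\min B_0$ the order gives $\nu\in g''[\{\alpha\},B_0]$, i.e.\ $g(\alpha,b)=\nu$ for some $b\in B_0\subs B$ with $\alpha<b$. Hence $\nu\in g''[A,B]$.

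The absence of an uncountable rainbow is copied from Theorem~\ref{tm:con}: given a name $\dot X$ for an uncountable set, pick by $\sigma$‑completeness a condition forcing an infinite $D\subs\dot X$ into the support and adjoin $\<D,\sup\ran(c)+1\>$ to $\dcal$; clause~(v) holds automatically because $\sup\ran(c)+1$ exceeds every colour used by $c$. Then each point of $X$ meets $D$ in a colour below the countable ordinal $\sup\ran(c)+1$, and the pigeonhole over the uncountably many points of $X$ (discarding the countably many that lie in $D$) produces two distinct pairs of equal colour, so $X$ is not $g$‑rainbow.

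The delicate step, as expected, is the $\oot$‑c.c., which I would establish by the ``twins'' amalgamation of Theorem~\ref{tm:con}. Under CH a $\Delta$‑system argument followed by an isomorphism‑type count leaves $\oot$ pairwise‑twin conditions with common root $K$ and $K<\supp(c)\setm K<\supp(c')\setm K$, and one must colour the cross pairs so as to honour \emph{simultaneously} every demand $\<B_0,\nu\>$ and coherence~(v) for every $\<D,\sigma\>$. Because the demands point toward $\min B_0$, those of the lower twin are vacuous and those of the upper twin are met on cross pairs precisely as in the $\tcal_0$‑step there, while~(v) is discharged by the $\tcal_1,\tcal_2$‑steps; the only genuine care is to keep the (per point finitely many) colour‑$\nu$ assignments from colliding with the (infinitely many) colour‑$0$ assignments forced by~(v), which is arranged by taking all the auxiliary points $\eta_x$ distinct once each $B_0$ in a demand is chosen infinite. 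I expect this combined amalgamation to be the sole real difficulty; everything else transcribes from Proposition~\ref{pr:con} and Theorem~\ref{tm:con}.
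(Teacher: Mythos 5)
Your clause~(2) argument and the bookkeeping of the demand mechanism are fine, but the proposal has a genuine gap in clause~(1), and it is located exactly where the theorem is harder than your reading of it. You restrict the relation to pairs with $\sup A\le\sup B$, asserting that ``the relation asks'' this; but that proviso belongs to the \emph{starred} relation ${\lambda}\rightarrow^*[({\alpha};{\beta})]_{{\kappa}-{\rm bdd}}$ of the introduction, whereas the relation of Theorem~\ref{tm:bau} is proved in the paper (Lemma~\ref{lm:gok}) for arbitrary $\dot X\in \br {\kappa};\oot;$ and $\dot Y\in \br {\kappa};\oo;$ with \emph{no} positional hypothesis. For ${\kappa}=\oot$ your restriction is vacuous (every subset of $\oot$ of type $\oot$ has supremum $\oot$, while a type-$\oo$ set has supremum of cofinality ${\omega}_1$), but for ${\kappa}>\oot$ it is not: take ${\kappa}={\omega}_3$, $B\subs \oot$ of type $\oot$ and $A\subs [\oot,{\omega}_3)$ of type $\oo$. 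Your mechanism is structurally unable to reach such pairs: a demand $\<B_0,{\nu}\>$ only colours pairs $\{{\beta},b\}$ where ${\beta}$ is a \emph{new} point below $\min B_0$, and you cannot retreat to a better-positioned subpair, since every uncountable subset of a type-$\oo$ set is cofinal in it. The paper's mechanism is position-free precisely because it stores no demands at all: conditions are just pairs $\<c,\dcal\>$, and Lemma~\ref{lm:twins} amalgamates a condition deciding ${\xi}\in \dot X$ with one deciding $Z\subs \dot Y$, writing ${\rho}$ many colours directly onto the cross pairs $[\{{\xi}\},Z]$ regardless of which side lies above.

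The same parameter ${\kappa}>\oot$ also breaks your $\oot$-c.c.\ sketch. Your amalgamation needs the configuration $K<\supp (c)\setm K<\supp (c')\setm K$ (as in Theorem~\ref{tm:con}, where the underlying set is $\oot$), to make the demands of the lower twin vacuous and serve those of the upper twin on fresh cross pairs. For ${\kappa}>\oot$ this configuration need not be attainable among any $\oot$ conditions: e.g.\ with supports $\{\oot\}\cup s_{\alpha}$, where the $s_{\alpha}\subs \oot$ are pairwise disjoint and countable, the $\Delta$-system kernel is $K=\{\oot\}$ and every non-kernel part lies \emph{below} $\sup K$, so no two members of the family are separated twins and no thinning recovers separation (disjointness only bounds the family by the cardinality of $\sup K$, which is now $\aleph_2$). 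For non-separated twins the natural amalgamation can be blocked: a demand $\<B_0,{\nu}\>$ of $p'$ with $\min B_0\notin K$ but $B_0\setm K$ finite must be met, for each new ${\beta}$, either on already-coloured pairs into $B_0\cap K$ or on the finitely many fresh slots in $B_0\setm K$, which is impossible once infinitely many colours are demanded on the same $B_0$. So as written your proof works only for ${\kappa}=\oot$; for general ${\kappa}\ge\oot$ you would have to add symmetric ``upward'' demands \emph{and} find a c.c.\ argument not relying on separated twins --- which is exactly the difficulty the paper sidesteps by dropping the $\acal$-component, letting its twins require only ${\varphi}({\xi})={\xi}$ on the overlap, and making the single amalgamation Lemma~\ref{lm:twins} serve double duty: $\oot$-c.c.\ and the realization of all colours.
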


\begin{proof}
  Define the poset $P$ as follows.
The underlying set $P$ consists of pairs 
$\<c,\dcal\>$ where
\begin{enumerate}[(i)]
  \item $c:\br \supp (c);2;\to {\omega}$
for some $\supp (c)\in \br {\kappa};{\omega};$, 
\item  $\dcal\subs \br \supp(c);{\omega};\times \oo$ is a countable family,
\item   $\forall \<D,{\sigma}\>\in \dcal$ 
$(\forall {\gamma}\in \supp(c))$
$|\{{\delta}\in D:c({\gamma},{\delta})< {\sigma}\}|={\omega}$.
\end{enumerate}

Put $\<d,\ecal\>\le \<c,\dcal\>$ iff
 $c\subset d$ and
$\dcal\subs \ecal$.

Then 
 $\le $ is  a partial order, and 
 $\pcal$ is ${\sigma}$-complete.

We say  that two conditions, $p=\<c,\dcal\>$
and $p'=\<c',\dcal'\>$, are {\em twins} iff
 there is an order preserving  bijection 
${\varphi}:\supp (c)\to \supp (c')$ such that 
\begin{enumerate}[(1)]
\item ${\varphi}({\xi})={\xi}$ for ${\xi}\in \supp (c)\cap \supp (c')$, 
\item $c({\xi},{\eta})=c'({\varphi}({\xi}),{\varphi}({\eta}))$ for
  each $\{{\xi},{\eta}\}\in \br \supp(c);2;$,
\item $\dcal'=\{\<{\varphi}''D,{\sigma}\>:\<D,{\sigma}\>\in\dcal\}$. 
\end{enumerate}

\begin{lemma}\label{lm:twins}
Assume that $p=\<c,\dcal\>$, $p'=\<c',\dcal'\>$ are twins. 
Let $q\le p$, $q=\<d,\ecal\>$, such that 
$\supp (d)\cap \supp (c')=\supp (c)\cap \supp (c')$.
Let $A\in \br \supp (d)\setm \supp (c');{\omega};$, 
${\xi}\in \supp (c') \setm \supp (c)$ and ${\rho}<\oo$. 
Then there is a common extension 
$r=\<c_r,\dcal_r\>$ of $q$ and $p'$ such that 
${\rho}\subs c_r''[\{{\xi}\},A]$.   
\end{lemma}

\begin{proof}[Proof of the lemma]
Write 
$K=\supp (c)\cap \supp (c')$ 
and fix the function ${\varphi}$ witnessing 
that $p$ and $p'$ are twins.
  Let 
\begin{displaymath}
  \tcal_0={\rho},
\end{displaymath}
 \begin{displaymath}
 \tcal_1=\{\<{\gamma},\<D',{\sigma}'\>,n\>:{\gamma}\in \supp(d)\setm K,
\<D',{\sigma}'\>\in\dcal', |D'\setm K|={\omega},
n\in {\omega}\},
\end{displaymath}
 \begin{displaymath}
 \tcal_2=\{\<{\gamma}',\<E,{\sigma}\>,n\>:{\gamma}'\in \supp
 c'\setm K,
\<E,{\sigma}\>\in\ecal, |E\setm K|={\omega},
n\in {\omega}\}.
\end{displaymath}

Since $\tcal=\tcal_0\cup \tcal_1\cup \tcal_2$
is countable we can pick pairwise distinct 
ordinals $\{{\eta}_x:x\in \tcal\}$ such that 
\begin{enumerate}[(a)]
\item if $x={\chi}\in {\rho}$ then ${\eta}_x\in A$,
\item if $x=\<{\gamma},\<D,{\sigma}\>,n\>\in \tcal_1\cup \tcal_2$ 
then ${\eta}_x\in D\setm  K$.
\end{enumerate}
Let $c_r\sups d\cup c_{\nu}$ such that 
\begin{enumerate}[(i)]
  \item $c_r({\eta}_x,{\xi})={\chi}$ if $x={\chi}\in \tcal_0$,
  \item $c_r({\eta}_x,{\gamma})=0$ if 
$\<{\gamma},\<D,{\sigma}\>,n\>\in\tcal_0\cup\tcal_1$.
\end{enumerate}
To prove $r=\<c_r,\dcal'\cup\ecal\>\in P$
it is enough to check condition (iii).

Assume first that 
$\<D,{\sigma}\>\in \dcal'$. 

If ${\gamma}\in \supp (c')$
then $c_r\restriction[\{{\gamma}\},D]=c'\restriction[\{{\gamma}\},D]$  
so we are done.
So we can assume that ${\gamma}\in\supp(d)\setm K$.

If $D\setm K$ is finite then 
$\<{\varphi}^{-1}D,{\sigma}\>\in\dcal\subs \ecal$,
and $D\cap K= {\varphi}^{-1}D\cap K$, so
the set
\begin{displaymath}
  H=\{{\delta}\in D\cap  K: d({\delta},{\gamma})<{\sigma}\}
\end{displaymath}
is infinite because $q\in P$ satisfies (iii),
and $H\subs\{{\delta}\in D: c_r({\delta},{\gamma})<{\sigma}\}$.

So we can assume that $D\setm K$ is infinite.
In this case $x_n=\<{\gamma},\<D,{\sigma}\>,n\>\in \tcal_1$ for
$n\in {\omega}$,
so $c_r({\gamma},{\eta}_{x_n})=0<{\sigma}$ and 
$\{{\eta}_{x_n}:n\in {\omega}\}\in \br D;{\omega};$.

Assume now that 
$\<D,{\sigma}\>\in \ecal$.

If ${\gamma}\in \supp(d)$
then $c_r\restriction[\{{\gamma}\},D]=d\restriction[\{{\gamma}\},D]$  
so we are done.
So we can assume that ${\gamma}\in\supp (c')\setm K$.

If $D\setm K$ is finite then 
${\gamma}'={\varphi}^{-1}({\gamma})\in \supp (c)\subs \supp(d)$
and  $q\in P$ imply that 
the set
\begin{displaymath}
  H=\{{\varepsilon}\in D\cap  K: d({\varepsilon},{\gamma}')<{\sigma}\}
\end{displaymath}
is infinite. But for each ${\varepsilon}\in H$ we have 
$c_r({\varepsilon},{\gamma})=c'({\varepsilon},{\gamma})=
c({\varepsilon},{\gamma}')=d({\varepsilon},{\gamma}')$.
So we can assume that $D\setm K$ is infinite.

In this case 
$x_n=\<{\gamma},\<D,{\sigma}\>,n\>\in \tcal_2$ for
$n\in {\omega}$,
so $c_r({\gamma},{\eta}_{x_n})=0<{\sigma}$ and 
$\{{\eta}_{x_n}:n\in {\omega}\}\in \br D;{\omega};$.

So $r\in P$ and clearly $r\le q,p'$.

Finally for each ${\zeta}<{\rho}$ we have 
${\eta}_{\zeta}\in A$ and $c_r({\xi},{\eta}_{\zeta})={\zeta}$.
So ${\rho}\subs c_r''[\{{\xi}\},A]$.
\end{proof}

\begin{lemma}
$\pcal$ is ${\omega}_2$-c.c.  
\end{lemma}

\begin{proof}[Proof of the lemma]
Since  any family of  conditions of size $\oot$
contains two conditions $p$ and $p'$ which are twins
we can apply the previous lemma to yield that 
$p$ and $p'$ are compatible in $P$.
\end{proof}

Let  $\gcal$ be  the generic filter for $\pcal$
and put $g=\cup\{c:\<c,\acal,{\xi}\>\in\gcal\}$

\begin{lemma}\label{lm:gok}
$g$
establishes $\oot\nar [(\oo;\oot)]^2_{\oo}$
in $V[\gcal]$.  
\end{lemma}

\begin{proof}
Assume that $p\force \dot X=\{\dot{\xi}_{\nu}:{\nu}<\oot\}
\in \br{\kappa};\oot;, \dot Y\in \br{\kappa};\oo;$.

For each ${\rho}<\oo$ we will construct a condition $r\le p$
such that $r\force {\rho}\subs g''[\dot X,\dot Y]$.

Write $p=\<c,\dcal\>$.
For each ${\nu}<\oot$ pick $p_{\nu}=\<c_{\nu},\dcal_{\nu}\>\le p$ such that 
$p_{\nu}\force \dot{\xi}_{\nu}={\xi}_{\nu}$ for some 
${\xi}_{\nu}\in \supp (c_{\nu})$. 
Since CH holds there is $I\in \br \oot;\oot;$ such that 
\begin{enumerate}
  \item $\{\supp (c_{\nu}):{\nu}\in I\}$ forms a $\Delta$-system with kernel
$K$,
  \item for each $\{{\nu},{\mu}\}\in \br I;2;$ 
the conditions $p_{\nu}$ and $p_{\mu}$ are twins.
\end{enumerate}

  Since $P$ satisfies $\oot$-c.c we can assume that 
${\xi}_{\nu}\in \supp (c_{\nu})\setm K$ for ${\nu}\in I$.

Fix ${\mu}\in I$.
Pick a condition $q\le p_{\mu}$, $q=\<d,\ecal\>$,
such that 
$q\force Z\subs \dot Y$
for some $Z\in \br \supp(d)\cap({\kappa}\setm K);{\omega};$. 
Choose ${\nu}\in I$ such that $\supp (c_{\nu})\cap \supp(d)=K$.

By lemma \ref{lm:twins}
there is a condition $r=\<c_r,\dcal_{\nu}\cup \ecal\>\in P$ such that 
$r\le q,p_{\nu}$ and ${\rho}\subs c_r''[\{{\xi}_{\mu}\},Z]$.
Then $r\force {\rho}\subs c_r''[\{{\xi}_{\nu}\},Z]\subs g''[\dot X,\dot Y]$.
\end{proof}

\begin{lemma}
There is no uncountable $g$-rainbow 
set in $V[\gcal]$.
\end{lemma}
\begin{proof}
Indeed, assume that $p_0 \force \dot X\in \br \oot;\oo;$.
Since $\pcal$ is ${\sigma}$-complete there are 
$p\le p_0$, 
$p=\<c,\dcal\>$,
 and  $D\in \br \supp(c);{\omega};$
such that $p\force D\subs \dot X$.
Let
$p'=\<c,
\dcal\cup\{\<D,(\sup \ran (c))+1\>\}.
\>$.
Then $p'\in P$ and $p'\le p$. Moreover 
\begin{displaymath}
p'\force \text{$\dot X$ is not a $g$-rainbow}. 
\end{displaymath}
Indeed, work in $V[\gcal]$, where $p'\in\gcal$. 
Write $X=\{{\xi}_{\nu}:{\nu}\in \oo\}$.
Then for each ${\nu}<{\omega}$ there is 
${\gamma}_{\nu}<\sup \ran (c)+1$ and ${\delta}_{\nu}\in D$ with 
$g({\delta}_{\nu}, {\xi}_{\nu})={\gamma}_{\nu}$.
Then there are ${\nu}<{\mu}<\oo$ with ${\gamma}_{\nu}={\gamma}_{\mu}$.
Thus $g({\delta}_{\nu}, {\xi}_{\nu})={\gamma}_{\nu}={\gamma}_{\mu}=
g({\delta}_{\mu}, {\xi}_{\mu})$ and ${\xi}_{\nu}\ne {\xi}_{\mu}$, 
i.e. $X$ is not a $g$-rainbow.
  \end{proof}

So, by the lemmas  above, $g$ satisfies the requirements of the
theorem.

\end{proof}

\section{$k$-bounded colourings}

\begin{definition}
  Let $X\in \br\oo;\oo;$, $f:\br X;2;\rightarrow \oo$, $k\in {\omega}$.
  \begin{enumerate}[(a)]
\item $f$ is {\em $k$-bounded } iff
{$|f^{-1}\{{\gamma}\}|\le k$} for each ${\gamma}\in \ran (f) $.
\item Put 
  \begin{displaymath}
  \fink X=\{D\in \br{\br X;k;};<{\omega};:d\cap d'=\empt 
\text{ for  each } \{d,d'\}\in \br D;2;\}.    
  \end{displaymath}
\item For $D\in\fink X$ let 
  \begin{displaymath}
    \Homo Df=\{{\alpha}:\forall d\in D\ 
(\forall {\delta},{\delta}'\in d)\ f({\delta},{\alpha})=f({\delta}',{\alpha})
\}.
  \end{displaymath}
\item Given any cardinal ${\mu}$ let
  \begin{displaymath}
    \seqk {\mu}X=\{\<D_i:i<{\mu}\>\subs\fink X: 
(\cup D_i)\cap (\cup
 D_j)=\empt
\text{ for } i<j<{\mu} \}.
  \end{displaymath}
\item 
$f$ is an {\em \ark-function}
iff 
\begin{enumerate}[(i)]
 \item $f$ is $k$-bounded,
 \item  for each $\<D_i:i<{\omega}\>\in \seqk {\omega}X$
there is ${\gamma}<\oo$ such that 
\begin{displaymath}
X\setm {\gamma}\subs\cup\{\Homo {D_i}f:i<{\omega}\}.  
\end{displaymath}
  \end{enumerate}
\end{enumerate}
  \end{definition}

\begin{obs}
An \ark-function $f:\br \oo;2;\to \oo$ establishes
the negative partition relation 
 $\oo \nar^* [({\omega};\oo)]_{{k}-{\rm bdd}}$.
\end{obs}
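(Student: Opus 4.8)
The plan is to argue by contradiction. Suppose $f$ is an \ark-function that nevertheless fails to establish $\oo\nar^*[({\omega};\oo)]_{k-{\rm bdd}}$. Unfolding the (negative) arrow notation, this failure produces a \emph{good pair}: a set $A\subs\oo$ of order type ${\omega}$ and a set $B\subs\oo$ of order type $\oo$ with $\sup A\le\sup B$ and
\begin{displaymath}
|[A;B]\cap f^{-1}\{{\xi}\}|<k\quad\text{for every }{\xi}\in\ran f.
\end{displaymath}
My goal is to exhibit a single colour $c$ whose fibre already contains $k$ distinct pairs all lying inside $[A;B]$; this contradicts the displayed bound and finishes the proof.

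The conceptual heart is to feed the \emph{small} set $A$ into the \ark-property and read off homogeneity over the \emph{large} set $B$. Since $A$ has order type ${\omega}$, I would split it, along its increasing enumeration, into pairwise disjoint $k$-element blocks $d_0,d_1,\dots$ and put $D_j=\{d_j\}\in\fink{\oo}$. The supports $\cup D_j=d_j$ are pairwise disjoint, so $\<D_j:j<{\omega}\>\in\seqk{\omega}{\oo}$, and clause (ii) of the definition of an \ark-function hands me an ordinal ${\gamma}<\oo$ with
\begin{displaymath}
\oo\setm{\gamma}\subs\bigcup_{j<{\omega}}\Homo{D_j}f.
\end{displaymath}

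Next I would use the order-type asymmetry. A subset of $\oo$ of order type $\oo$ is unbounded, so $B$ is cofinal in $\oo$, whereas ${\gamma}$ and $\sup A$ are both countable ordinals; hence I can pick ${\beta}\in B$ with ${\beta}>{\gamma}$ and ${\beta}>\sup A$. By the choice of ${\gamma}$ there is some $j$ with ${\beta}\in\Homo{D_j}f$; writing $d_j=\{{\delta}_0,\dots,{\delta}_{k-1}\}$, membership means $f({\delta}_0,{\beta})=\dots=f({\delta}_{k-1},{\beta})=:c$. Since ${\beta}>\sup A\ge\max d_j$, every ${\delta}_i<{\beta}$, so the $k$ distinct pairs $\{{\delta}_i,{\beta}\}$ all lie in $[A;B]$ and all receive colour $c$. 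Therefore $|[A;B]\cap f^{-1}\{c\}|\ge k$ (indeed $k$-boundedness forces equality), contradicting the good-pair bound.

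The one genuinely delicate step is the placement of ${\beta}$: the witness must sit simultaneously above the threshold ${\gamma}$ produced by the \ark-property and above $\sup A$, so that the homogeneous pairs actually fall into $[A;B]$. This is precisely where the hypothesis on \emph{order types}, rather than mere cardinalities, is used, since it guarantees that $B$ is cofinal in $\oo$ while ${\gamma}$ and $\sup A$ remain countable. Everything else is routine bookkeeping: the block partition of $A$, and the remark that a single homogeneous ${\beta}$ already saturates a colour fibre because $f$ is $k$-bounded.
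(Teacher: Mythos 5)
Your proposal is correct and is essentially the paper's own argument: both split $A$ into pairwise disjoint $k$-element blocks $d_i$, form $\langle \{d_i\}:i<\omega\rangle\in\seqk {\omega}{\oo}$, invoke clause (ii) of the \ark\ definition to get a threshold ${\gamma}$, and pick ${\beta}\in B$ above it (and above $\sup A$) so that a single colour fibre contributes $k$ pairs to $[A;B]$, which $k$-boundedness turns into the required violation. The only differences are cosmetic: you argue by contradiction from a putative good pair where the paper argues directly for arbitrary $A\in\br \oo;{\omega};$ and $B\in\br \oo;\oo;$, and you make explicit the placement of ${\beta}$ that the paper leaves implicit.
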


\begin{proof}
Assume that $A\in \br \oo;{\omega};$ and
$B\in \br \oo;\oo;$. Pick pairwise disjoint sets
$\{d_i:i<{\omega}\}\subs \br A;k;$. Write 
$D_i=\{d_i\}$ and $\vec D=\<D_i:i<{\omega}\>$.
Since $\vec D\in \seqk {\omega}\oo$ and 
$f$ is an \ark-function there is ${\beta}\in B$
such that ${\beta}\in \Homo {D_i}f$ for some $i<{\omega}$,
which means that $|f''[d_i,\{{\beta}\}]|=1$. Since 
$d_i\in \br A;k;$ we are done.   
\end{proof}

\begin{lemma}\label{lm:const}
If CH holds then for each $k\in {\omega}$ 
there is an \ark-function $f:\br\oo;2;\rightarrow \oo$.   
\end{lemma}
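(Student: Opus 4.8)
The plan is to build $f$ by transfinite recursion on ${\alpha}<\oo$, defining all the values $f(\{{\xi},{\alpha}\})$ for ${\xi}<{\alpha}$ simultaneously at stage ${\alpha}$, and to use CH only to organise which sequences must be handled. Since CH gives $|\seqk{\omega}{\oo}|=\oo$, I would fix an enumeration $\seqk{\omega}{\oo}=\{\dvec^{\eta}:{\eta}<\oo\}$, write $\dvec^{\eta}=\<D^{\eta}_i:i<{\omega}\>$, and for each ${\eta}$ choose ${\mu}_{\eta}<\oo$ with $\bigcup_i\bigcup D^{\eta}_i\subs{\mu}_{\eta}$ (possible, as this union is countable). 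Condition (i) of an \ark-function, $k$-boundedness, will be secured by always using colours that are fresh for the whole construction so far; the real content is condition (ii), i.e.\ that for every ${\eta}$ the set of ${\alpha}$ lying in no $\Homo{D^{\eta}_i}{f}$ is bounded in $\oo$.

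At stage ${\alpha}$ I would \emph{handle} every task in the countable set $T_{\alpha}=\{{\eta}<{\alpha}:{\mu}_{\eta}\le{\alpha}\}$. Each fixed ${\eta}$ lies in $T_{\alpha}$ for all ${\alpha}\ge\max({\eta}+1,{\mu}_{\eta})$, so handling all of $T_{\alpha}$ at every stage makes the ``bad set'' of each task bounded, which is exactly (ii). To \emph{handle} ${\eta}$ means to pick an index $i({\eta})$ and colour so that $f(\cdot,{\alpha})$ is constant on each $d\in D^{\eta}_{i({\eta})}$, forcing ${\alpha}\in\Homo{D^{\eta}_{i({\eta})}}{f}\subs\bigcup_i\Homo{D^{\eta}_i}{f}$.

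The key step, and the main obstacle, is to choose the indices $i({\eta})$ for ${\eta}\in T_{\alpha}$ so that the selected $k$-sets $\bigcup_{{\eta}\in T_{\alpha}}D^{\eta}_{i({\eta})}$ form a \emph{pairwise disjoint} family of $k$-subsets of ${\alpha}$. Disjointness is what permits the crucial colouring: give each selected $d$ its own fresh colour ${\gamma}_d$ used on exactly the $k$ pairs $\{{\delta},{\alpha}\}$ with ${\delta}\in d$, which uses ${\gamma}_d$ precisely $k$ times and makes $d$ monochromatic with ${\alpha}$, whereas overlapping $d$'s would force some colour to be used more than $k$ times. When $T_{\alpha}$ is infinite the disjoint selection is not automatic, but it is always possible by re-enumerating $T_{\alpha}$ in order type ${\omega}$, say as ${\eta}_0,{\eta}_1,\dots$, and choosing greedily: at step $m$ only finitely many $k$-sets have been committed, and since the sets $\<\bigcup D^{{\eta}_m}_i:i<{\omega}\>$ are pairwise disjoint, all but finitely many indices $i$ yield a $\bigcup D^{{\eta}_m}_i$ disjoint from that finite committed part (one may also take an $i$ with $D^{{\eta}_m}_i=\empt$, which is harmless since $\Homo{\empt}{f}=\oo$). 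As ${\eta}_m\in T_{\alpha}$ forces $\bigcup D^{{\eta}_m}_{i({\eta}_m)}\subs{\mu}_{{\eta}_m}\le{\alpha}$, the chosen sets genuinely lie below ${\alpha}$.

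To finish stage ${\alpha}$ I would assign a private fresh colour to each pair $\{{\xi},{\alpha}\}$ with ${\xi}$ outside every selected $d$. Then every colour is used exactly once or exactly $k$ times, so $f$ is $k$-bounded; and for each ${\eta}$ and each ${\alpha}\ge\max({\eta}+1,{\mu}_{\eta})$ the ordinal ${\alpha}$ was handled, hence ${\alpha}\in\bigcup_i\Homo{D^{\eta}_i}{f}$, giving the required bound $\oo\setm\max({\eta}+1,{\mu}_{\eta})\subs\bigcup_i\Homo{D^{\eta}_i}{f}$. Thus $f$ is an \ark-function. Apart from the simultaneous disjoint selection, which is handled by the order-type-${\omega}$ greedy argument above, everything is routine bookkeeping.
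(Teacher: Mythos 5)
Your proof is correct and is essentially the paper's own construction: a transfinite recursion in which CH enumerates $\seqk{\omega}{\oo}$, at stage ${\alpha}$ indices are chosen so that the selected finite systems of $k$-sets are pairwise disjoint, and $f(\cdot,{\alpha})$ is made constant on each selected $k$-set and injective with fresh colours elsewhere (the paper achieves your ${\mu}_{\eta}$/$T_{\alpha}$ bookkeeping by fixing the enumeration with $\cup\cup\dvec_{\alpha}\subs{\alpha}$, and uses pairwise disjoint colour sets $C_{\alpha}$ for freshness). Your greedy order-type-${\omega}$ selection just makes explicit the step the paper states without proof, namely that the indices $i_{\xi}$ with pairwise disjoint $\cup(\dvec_{\xi}(i_{\xi}))$ can be picked.
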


\begin{proof}
The construction is standard. 
Let $\{C_{\alpha}:{\omega}\le {\alpha}<\oo\}\subs \br \oo;{\omega};$ be 
disjoint sets.
Fix an enumeration $\<\dvec_{\alpha}:{\omega}\le {\alpha}<\oo\>$
of $\seqk {\omega}{\oo}$ such that 
$\cup \cup \dvec_{\alpha}\subs {\alpha}$.

Let ${\alpha}<\oo$ be fixed.
For each 
${\xi}<{\alpha}$ pick $i_{\xi}\in {\omega}$ such that 
the sets $\{\cup (\dvec_{\xi}(i_{\xi})):{\xi}<{\alpha}\}$ 
are pairwise disjoint.
Choose a function  
$g_{\alpha}:{\alpha}\rightarrow C_{\alpha}$ such that  
\begin{itemize}
\item[($*$)] $g_{\alpha}({\delta})=g_{\alpha}({\delta}')$ iff
 $\{{\delta},{\delta}'\}\in \br d;2;$ for some 
 ${\xi}<{\alpha}$ and $d\in \dvec_{\xi}(i_{\xi})$.  
\end{itemize}

For ${\delta}<{\alpha}$ let $f({\delta},{\alpha})=g_{\alpha}({\delta})$.
\end{proof}

\begin{theorem}\label{tm:pos}
If GCH holds and $f:\br \oo;2;\rightarrow\oo$ is an \ark-function
then there is a c.c.c. poset $P$ such that 
\begin{displaymath}
V^P\models\text{ $f$  c.c.c-indestructibly establishes
 $\oo \nar^* [(\oo;\oo)]_{{k}-{\rm bdd}}$}.
\end{displaymath}
\end{theorem}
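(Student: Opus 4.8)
The plan is to reduce the c.c.c-indestructibility of $\oo\nar^*[(\oo;\oo)]_{k-bdd}$ to the indestructibility of a property of a single \emph{countable} object, and then to seal that property by a finite-condition forcing whose chain condition is checked by a twin-amalgamation argument. The reduction runs as follows. Suppose in some extension $A,B\subseteq\oo$ are uncountable, $\sup A\le\sup B$, and no colour class of $f$ lies entirely inside $[A;B]$. By $k$-boundedness this forces, for every $d\in\br A;k;$, the set $\Homo{\{d\}}{f}\cap B$ to be bounded in $\oo$ (otherwise a constant value of $f$ on some $[d,\{\beta\}]$ with $\beta\in B$, $\beta>\max d$, would fill a whole colour class inside $[A;B]$). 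Fixing an infinite $A_0\subseteq A$ and pairwise disjoint $d_i\in\br {A_0};k;$ and setting $\dvec=\langle\{d_i\}:i<\omega\rangle\in\seqk{\omega}{\oo}$, a countable union of bounded sets is bounded, so $\bigl(\bigcup_i\Homo{\{d_i\}}{f}\bigr)\cap B$ is bounded; since $B$ is unbounded, $\oo\setminus\bigcup_i\Homo{\{d_i\}}{f}$ is unbounded, i.e. clause (ii) of the definition of an \ark-function fails for $\dvec$. Hence it suffices to make clause (ii) itself c.c.c-indestructible, the gain being that $\dvec$ is countable, so in any c.c.c extension its support is covered by a ground-model countable set.

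Clause (ii) is exactly the hypothesis that $f$ is an \ark-function, so it holds in $V$; the entire content is to make it survive arbitrary c.c.c forcing. One cannot do this by bare reflection: a counterexample is a \emph{new} $\dvec$ with support inside a fixed ground-model countable $S'$, and the pattern $\langle f(\delta,\alpha):\delta\in S'\rangle_{\alpha<\oo}$ may admit, in a c.c.c extension, a block-partition that is non-homogeneous cofinally often even though no ground-model partition is; this is the Suslin-type phenomenon behind \cite[Theorem 4]{ACS} that makes the bare property destructible. Accordingly I would let $P$ consist of finite approximations to a generic certificate which, for every ground-model countable support and every block-partition of it, records a bound past which $f$-homogeneity is guaranteed; equivalently, the generic uniformly specialises the trees of attempts to build an unbounded non-homogeneous set.

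I would verify that $P$ is c.c.c by the twin-amalgamation method used for the posets of Theorems \ref{tm:con} and \ref{tm:bau}, now adapted to finite conditions: given $\oot$ conditions, pass to a $\Delta$-system and then to order-isomorphic twins agreeing on the root, and amalgamate, discharging the finitely many new homogeneity demands by appealing to clause (ii) for $f$ in the ground model. For indestructibility I would work in $V^P$: given a c.c.c $Q$ and a $Q$-name for a pair $(A,B)$ as above, the reduction yields a $Q$-name for a countable $\dvec$ with unbounded non-homogeneous set; by c.c.c its support lies in a ground-model countable $S'$, and the certificate added by $P$ for $S'$ — a fact decided by the $P$-generic and preserved because $\oo$ is preserved — bounds that set, a contradiction.

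The main obstacle is the middle step. The difficulty is to design $P$ so that its generic certificate is simultaneously addable by a c.c.c (twin-amalgamable) poset \emph{and} unbreakable by every later c.c.c forcing. The precise danger is that the trees of non-homogeneous block-partitions are Suslin-like — branchless in $V$ but able to sprout a cofinal branch after c.c.c forcing — so the real work is to show, using clause (ii) of the \ark-property, that these trees stay branchless through the amalgamation. This single fact is what both the c.c.c verification and the indestructibility argument ultimately rest on, and I expect it to be where the weight of the proof lies.
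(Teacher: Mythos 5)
Your reduction is correct as far as it goes, but it reduces to the wrong target, and the ``middle step'' you flag as the main difficulty is not merely hard --- it is provably impossible. The fatal move is passing from the uncountable $A$ to an infinite $A_0\subseteq A$: after that, the $\dvec$ you produce is indistinguishable from one arising from a genuinely countable $A$, so your plan requires clause (ii) of the \ark-property (for all $\dvec\in\seqk{\omega}{\oo}$, including new ones with support inside a ground-model countable set) to survive every c.c.c.\ forcing. But clause (ii) together with $k$-boundedness implies $\oo\nar^*[({\omega};\oo)]_{k-{\rm bdd}}$ --- this is exactly the paper's Observation, i.e.\ your own computation run with countable $A$ --- and the paper points out that \emph{no} function c.c.c-indestructibly establishes that relation, since Martin's Axiom implies $\oo\rightarrow^*(({\omega},\oo))^2_{k-{\rm bdd}}$ and $MA_{\aleph_1}$ can be forced by a c.c.c.\ iteration over any model, in particular over $V^P$. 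So whatever certificates your $P$ lays down for ground-model countable supports and their block-partitions, a later c.c.c.\ forcing yielding enough of $MA_{\aleph_1}$ produces a countable $A$, an uncountable $B$, and hence a new $\dvec$ for which $\bigcup_i\Homo{D_i}{f}$ meets $B$ boundedly; no generic object added in $V^P$ can bound this, because the failure of clause (ii) is then outright provable. Your appeal to ``specialising the trees of attempts'' does not escape this: a violating $\dvec$ is a single real, so the relevant trees have height ${\omega}$ and specialisation-type sealing has no purchase.

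The paper's proof is built precisely to avoid this trap: it never tries to preserve the \ark-property itself, but a strictly weaker graph property calibrated so that it implies only the $(\oo;\oo)$ relation. Fixing an $\oo$-chain of elementary submodels, the rank ${\rho}({\xi})=\min\{{\nu}:{\xi}\in N_{\nu}\}$, and the function $r$ of Corollary \ref{cor:r}, it defines a graph $H_f$ on $\oo\times K$ with $K=\br \oo;k;\times\oo\times{\omega}$ whose edges encode \emph{conditional} homogeneity demands. Lemma \ref{lm:1} shows that $1$-solidity of $H_f$ implies $\oo\nar^*[(\oo;\oo)]_{k-{\rm bdd}}$, and its proof interleaves the ${\rho}$-ranks of uncountably many ${\xi}_{\beta}$'s with those of uncountably many blocks $d_{\alpha}$ --- this is where the uncountability of $A$, which you discarded, is retained, and it is why $1$-solidity does not imply the $MA$-incompatible $({\omega};\oo)$ relation. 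Lemma \ref{lm:strongly} then shows (under CH, via the elementary-chain Lemma \ref{lm:main}) that any \ark-function makes $H_f$ strongly solid, and the Black Box Theorem of \cite{So} (built on the Abraham--Todor\v cevi\v c method, where the $\Delta$-system/twin amalgamation you envisage actually lives) supplies the c.c.c.\ poset $P$ making $1$-solidity c.c.c-indestructible. To repair your proposal you would have to make the preserved statement quantify only over configurations in which the blocks come from an uncountable, rank-interleaved reservoir --- which is exactly what the side conditions (2)--(4) in the definition of $H_f$ accomplish.
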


Although 
an \ark-function establishes
 $\oo \nar^* [({\omega};\oo)]_{{k}-{\rm bdd}}$
but
there is no function which  c.c.c-indestructibly establishes
 $\oo \nar^* [({\omega};\oo)]_{{k}-{\rm bdd}}$
because Martin's Axiom implies 
$\oo \rightarrow^* (({\omega},\oo))^2_{k-bdd}$.

\begin{theorem}\label{tm:neg}
If GCH holds and $f:\br \oo;2;\rightarrow\oo$ is an \ark-function
then there is a set  $X\in \br \oo;\oo;$ and a c.c.c. poset $Q$ such that 
\begin{displaymath}
V^Q\models\text{ $X$ has a partition into countably many 
$f$-rainbow sets}.  
\end{displaymath}
\end{theorem}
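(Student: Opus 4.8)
The plan is to build a suitable $X\in\br \oo;\oo;$ and to force a partition of $X$ with finite approximations to a colouring $F:X\to{\omega}$ all of whose colour classes are $f$-rainbow. First I tame the coincidences of $f$ on $X$. By recursion on ${\nu}<\oo$ I choose an increasing sequence $X=\{x_{\nu}:{\nu}<\oo\}$ so that
\begin{itemize}
\item[($\dagger$)] whenever ${\mu}<{\nu}$ and $P\in\br X;2;$ satisfies $P\neq\{x_{\mu},x_{\nu}\}$ and $f(P)=f(x_{\mu},x_{\nu})$, then $\max P\geq x_{\nu}$.
\end{itemize}
At stage ${\nu}$ only countably many ordinals are forbidden as $x_{\nu}$: for each of the countably many pairs $P$ already placed, the value $f(P)$ is realised by at most $k$ pairs, hence at most $k$ ordinals can serve as the top of a coincidence with $P$. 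Condition ($\dagger$) makes every coincidence $f(P_1)=f(P_2)$, $P_1\neq P_2$, inside $X$ a \emph{star}, i.e. $P_1$ and $P_2$ share their larger endpoint; consequently $Y\subseteq X$ is $f$-rainbow iff $a\mapsto f(a,t)$ is injective on $Y\cap t$ for each $t\in Y$, so the only way a colour class can fail to be rainbow is through a \emph{monochromatic star}: a triple $\{a,b,t\}$ with $a,b<t$, $f(a,t)=f(b,t)$ and $F(a)=F(b)=F(t)$.

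The poset $Q$ consists of the finite partial maps $p:\dom p\to{\omega}$ with $\dom p\in\br X;{<\omega};$ that contain no monochromatic star, ordered by reverse inclusion. Density is immediate — an unused colour can always be given to a new point without creating a star — so the generic map $F$ is total, and since by ($\dagger$) any violation of rainbow-ness would be a monochromatic star lying inside a single condition, every class $F^{-1}\{n\}$ is $f$-rainbow and $X=\bigcup_{n<{\omega}}F^{-1}\{n\}$ is the desired partition. Note that $f\restriction\br X;2;$ is itself an \ark-function, since $\seqk{\omega}{X}\subseteq\seqk{\omega}{\oo}$ and, for $\langle D_i\rangle$ in the former, a tail of $\oo$ lies in $\bigcup_i\Homo{D_i}{f}$, hence so does a tail of $X$; thus $X$ has no uncountable $f$-rainbow subset in $V$, and one class $F^{-1}\{n\}$ must become an uncountable $f$-rainbow set in $V^{Q}$. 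This is precisely the negative partition relation being destroyed, and is in no conflict with Theorem \ref{tm:pos}, which uses a different poset.

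The crux is that $Q$ is c.c.c. I would run the twin-amalgamation method of Lemma \ref{lm:twins}: given ${\omega}_1$ conditions, thin to a $\Delta$-system with root $R$ whose members are pairwise twins (same colour- and $f$-pattern on their domains) and whose free parts $F_{\xi}=\dom p_{\xi}\setminus R$ are pairwise separated with $F_{\xi}<F_{\eta}$ for ${\xi}<{\eta}$. For ${\xi}<{\eta}$ every monochromatic star of $p_{\xi}\cup p_{\eta}$ that is present in neither $p_{\xi}$ nor $p_{\eta}$ has its top in $F_{\eta}$ and at least one lower vertex in $F_{\xi}$. If a lower vertex lies in $R$, or if only one lower vertex lies in $F_{\xi}$ while the other lies in $F_{\eta}$, then by $k$-boundedness there are, for each fixed ${\eta}$, only finitely many offending lower vertices, so a routine preliminary pressing-down disposes of these cases.

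The main obstacle — the heart of the proof — is the genuinely crossing configuration in which \emph{both} lower vertices $a,b$ lie in $F_{\xi}$: they share a colour $n$, and some $t\in F_{\eta}$ of colour $n$ is a joint, $f(a,t)=f(b,t)$. Here the \ark-property fights back. Applying clause (ii) of the definition of an \ark-function to a sequence $\langle\{d_i\}:i<{\omega}\rangle\in\seqk{\omega}{X}$ of singleton families of pairwise disjoint $k$-sets, $\bigcup_i\Homo{\{d_i\}}{f}$ covers a tail of $X$, so some $\Homo{\{d_i\}}{f}$ is uncountable and two elements of $d_i$ acquire uncountably many common joints; and as $f\restriction\br X;2;$ remains \ark on every uncountable subset, this cannot be cured by thinning $X$ further. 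Thus an uncountable antichain would amount to an uncountable \emph{coherent} system $\langle a_{\xi},b_{\xi},t_{\xi}:{\xi}<{\omega}_1\rangle$ in which $t_{\eta}$ is a joint of $\{a_{\xi},b_{\xi}\}$ precisely when ${\xi}<{\eta}$ (the diagonal staying free, so that each $p_{\xi}$ is a condition). My plan is to forbid such systems already in the construction of $X$: strengthen the recursion, using CH for the bookkeeping, so that every prospective coherent system is destroyed — equivalently, so that the joint relation on $X$ is too incoherent to carry an uncountable antichain — whence any ${\omega}_1$-sized family of twins contains a compatible pair. Discharging this bookkeeping (or, alternatively, attaching to each condition a finite promise component $\dcal$ in the spirit of Theorems \ref{tm:con} and \ref{tm:bau} and amalgamating twins against these promises) is the technical core; the remaining verifications are then routine.
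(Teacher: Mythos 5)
Your preliminary steps are fine: the recursion giving $(\dagger)$ works (at each stage only countably many ordinals are excluded, $k$ per already-placed pair), the reduction of rainbowness inside $X$ to the absence of monochromatic stars is correct, density of $Q$ is immediate, and your disposal of the antichain cases with at most one lower vertex in $F_{\xi}$ is genuinely routine. But the proof stops exactly where the real work is. For the case where both lower vertices $a,b$ lie in $F_{\xi}$ you offer only a plan --- ``forbid such coherent systems already in the construction of $X$'' or ``attach a finite promise component'' --- with no construction, no bookkeeping, and no argument that either can succeed. Worse, your own observation works against the plan: since $f\restriction \br X;2;$ remains an \ark-function on every uncountable subset of $X$, for \emph{every} disjoint family of $k$-sets inside $X$ a tail of $X$ consists of common joints, so the joint relation on $X$ is unavoidably rich no matter how the recursion is strengthened; it is entirely unclear that any CH-bookkeeping can make it ``too incoherent to carry an uncountable antichain.'' The pointer to the promise machinery of Theorems \ref{tm:con} and \ref{tm:bau} does not help either: there the countable $\dcal$-components serve to kill rainbow sets in a ${\sigma}$-complete, ${\omega}_2$-c.c.\ poset, and nothing indicates how finite promises would yield c.c.c.\ for a finite-condition poset. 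So the central claim --- that $Q$ is c.c.c.\ --- is unproved, and with it the theorem.

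For comparison, the paper resolves precisely this crossing case by \emph{embracing} the coincidences that the \ark-property forces, rather than trying to avoid them. It chooses $X$ separated by an $\oo$-chain $\nvec=\<N_{\nu}:{\nu}<\oo\>$ of elementary submodels with $f\in N_0$, and attaches to each ${\xi}\in X$ a set $d_{\xi}\in \br \oo;k;$ with ${\xi}\in d_{\xi}$ and ${\rho}({\delta})={\rho}({\xi})$ for all ${\delta}\in d_{\xi}$, where ${\rho}(x)=\min\{{\nu}:x\in N_{\nu}\}$. Given $\oo$ many conditions, after the $\Delta$-system thinning and a finite pressing-down (via the finite sets $F({\zeta})$, your routine cases), CH captures the sequence $\dvec=\<D_{{\zeta}_i}:i<{\omega}\>$ with $D_{\zeta}=\{d_{\xi}:{\xi}\in x_{\zeta}\setm x\}$ in some $N_{\gamma}$, and Lemma \ref{lm:main} --- applied along the chain of models, which is exactly why $X$ must be $\nvec$-separated, a property your $(\dagger)$-construction does not provide --- produces two conditions such that \emph{every} crossing pair $\{{\xi},{\eta}\}$ satisfies $f({\delta},{\eta})=f({\xi},{\eta})$ for \emph{all} ${\delta}\in d_{\xi}$. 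By $k$-boundedness this exhausts the colour: the full preimage of $f({\xi},{\eta})$ is $\bigl\{\{{\delta},{\eta}\}:{\delta}\in d_{\xi}\bigr\}$, and separation guarantees $d_{\xi}$ meets the union of the two domains only in ${\xi}$, so no crossing value can collide with any other pair in the union --- monochromatic stars included. The union of the two conditions is therefore itself a condition, with no incoherence of the joint relation needed (and, as you noticed, none available). To repair your write-up you would have to replace the deferred bookkeeping by this saturation mechanism, or an equivalent one; as it stands, the gap is the heart of the theorem, not a technicality.
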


Before proving the theorems above we need to introduce
some notions.

 Given a set x denote $\operatorname{TC}(x)$
 the transitive closure of $x$. 
Let ${\kappa}$ be  a large enough regular cardinals,
(${\kappa}=(2^{\oo})^{+}$ works). Put
$H_{{\kappa}}=
\{x:|\operatorname{TC}(x)|<{\kappa}\}$ and 
$\hcal_{{\kappa}}=\<H_{{\kappa}},\in,\prec\>$, where $\prec$ is a
well-ordering of $H_{\kappa}$.

\begin{definition}
 (a)
 A sequence $\vec N=\<N_{{\alpha}}:{\alpha}\in A\>$ of countable, 
elementary
submodels of $\hcal_{{\kappa}}$ is called an {\em A-chain\/} iff $A\subset\oo$ 
and whenever ${\alpha},{\beta}\in A$ with ${\alpha}<{\beta}$ 
we have $N_{\alpha}\in N_{\beta}$.\\
(b)  Suppose that $\vec N=\<N_{\alpha}:{\alpha}\in A\>$ is an $A$-chain and
$Y\subs\oo$. We say that Y is {\em separated by $\vec N$} iff for each
$C\in\br Y;2;$ there is an ${\alpha}\in A$ with $|N_{\alpha}\cap C|=1$.
  \end{definition}

\begin{lemma}\label{lm:main}
Assume that $f$ is an \ark-function.
If $\<N_m:m\le n\>$ is an elementary $n+1$-chain, $f\in N_0$,
$\dvec_0,\dots, \dvec_{n-1}\in \seqk {\omega}\oo\cap N_0$,  and 
${\alpha}_m\in N_{m+1}\setm N_m$ for $m<n$ then 
the set 
\begin{displaymath}
\{i<{\omega}:\forall m<n\ {\alpha}_m\in \Homo {\dvec_m(i)}f\}
\end{displaymath}
is infinite.
\end{lemma}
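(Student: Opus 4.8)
The plan is to extract a one-step statement, prove it directly from property (ii) of an \ark-function, and then iterate it by induction on $n$, stripping off the top model of the chain. The single fact about the chain I will use repeatedly is this: for a countable $N\prec\hcal_{\kappa}$ with $\oo\in N$, the set $\delta_N:=N\cap\oo$ is a countable ordinal, so every $\gamma\in N\cap\oo$ satisfies $\gamma<\delta_N$, whereas every $\alpha\in\oo\setm N$ satisfies $\alpha\ge\delta_N$; in particular $\gamma<\alpha$ whenever $\gamma\in N\cap\oo$ and $\alpha\in\oo\setm N$. (Note that $\Homo Df\subs\oo$ always holds, so the $\alpha_m$ in the lemma may be assumed to lie in $\oo$.)

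Next I would prove the following sublemma: if $N\prec\hcal_{\kappa}$ is countable, $f\in N$, $\evec\in\seqk{\omega}{\oo}\cap N$ and $\alpha\in\oo\setm N$, then $\{j<\omega:\alpha\in\Homo{\evec(j)}{f}\}$ is infinite. I argue by contradiction: if this set is bounded by some $N_0<\omega$, then the tail $\evec'=\<\evec(N_0+j):j<\omega\>$ again lies in $\seqk{\omega}{\oo}\cap N$ (since $N_0,\evec\in N$, and passing to a subsequence preserves disjointness of the unions). Applying property (ii) to $\evec'$ inside $N$, elementarity produces $\gamma\in N\cap\oo$ with $\oo\setm\gamma\subs\bigcup_{j}\Homo{\evec'(j)}{f}=\bigcup_{i\ge N_0}\Homo{\evec(i)}{f}$. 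By the fact above $\gamma<\alpha$, so $\alpha\in\oo\setm\gamma$ and hence $\alpha\in\Homo{\evec(i)}{f}$ for some $i\ge N_0$, contradicting the bound. This step is the crux and the main obstacle: property (ii) only furnishes \emph{one} good index per point, so to manufacture \emph{infinitely many} I feed it every tail of $\evec$, exploiting that each such tail is still an element of $N$.

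Finally I would prove the lemma by induction on $n$; the case $n=0$ is vacuous, since the displayed set is then all of $\omega$. For the inductive step, observe that $\dvec_0,\dots,\dvec_{n-1}\in N_0\subs N_{n-1}$ and that $\alpha_m\in N_{m+1}\subs N_{n-1}$ for every $m\le n-2$. The induction hypothesis, applied to the $n$-chain $\<N_m:m\le n-1\>$ with the data $\dvec_0,\dots,\dvec_{n-2}$ and $\alpha_0,\dots,\alpha_{n-2}$, gives that $T:=\{i<\omega:\forall m<n-1\ \alpha_m\in\Homo{\dvec_m(i)}{f}\}$ is infinite; since $T$ is defined from parameters all lying in $N_{n-1}$, we also have $T\in N_{n-1}$. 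Let $\evec$ be $\dvec_{n-1}$ restricted along the increasing enumeration of $T$; then $\evec\in\seqk{\omega}{\oo}\cap N_{n-1}$. Applying the sublemma with $N=N_{n-1}$ and $\alpha=\alpha_{n-1}\in\oo\setm N_{n-1}$ yields infinitely many $i\in T$ with $\alpha_{n-1}\in\Homo{\dvec_{n-1}(i)}{f}$. These $i$ are exactly the members of $\{i<\omega:\forall m<n\ \alpha_m\in\Homo{\dvec_m(i)}{f}\}$, which is therefore infinite, completing the induction.
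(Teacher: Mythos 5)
Your proof is correct and takes essentially the same route as the paper: induction on $n$, restricting $\dvec_{n-1}$ along the infinite set $T$ supplied by the induction hypothesis (noting $T\in N_{n-1}$ by definability from parameters in $N_{n-1}$), then using tails of that subsequence, property (ii) of an \ark-function, and elementarity to conclude that $\alpha_{n-1}\in\oo\setm N_{n-1}$ lies above the relevant threshold ordinal. The only difference is packaging: you isolate a one-model sublemma proved by contradiction from a single tail, getting the threshold $\gamma\in N$ directly by elementarity, whereas the paper takes $\gamma=\sup\{\gamma_\ell:\ell<\omega\}$ over all tails and checks $\<\gamma_\ell:\ell<\omega\>\in N_{n-1}$ — a cosmetic variant of the same argument.
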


\begin{proof}
We prove the lemma by induction on $n$.
So assume that the set  
\begin{displaymath}
I=\{i<{\omega}:\forall m<n-1\ {\alpha}_m\in \Homo {\dvec_m(i)}f\}
\end{displaymath}
is infinite. (If $n=1$ then  $I={\omega}$).

Write $I=\{i_j:j\in {\omega}\}$ and 
for each $\ell<{\omega}$  put 
\begin{displaymath}
\evec^\ell=\<\dvec_{n-1}(i_j): \ell\le j<{\omega}\>. 
\end{displaymath}
Since $f$ is \ark\ and $\evec^\ell\in \seqk {\omega}\oo$, 
there is ${\gamma}_\ell<\oo$
such that 
\begin{displaymath}
\oo\setm {\gamma}_\ell\subs \cup\{\Homo{\dvec_{n-1}(i_j)}{f}:j\in {\omega}\setm \ell\}.  
\end{displaymath}
So if we take ${\gamma}=\sup\{{\gamma}_\ell:\ell<{\omega}\}$ then 
for each ${\alpha}\in \oo\setm {\gamma} $  the set 
\begin{displaymath}
J_{\alpha}=\{i\in I: {\alpha}\in \Homo{\dvec_{n-1}(i)}f\}  
\end{displaymath}
is infinite.

Since $f,\dvec_0,\dots, \dvec_{n-1}, {\alpha}_0,\dots, {\alpha}_{n-2}
\in   N_{n-1}$ we have $I\in N_{n-1}$ and so 
$\evec^\ell\in N_{n-1}$ as well. Thus
$\<{\gamma}_\ell:\ell<{\omega}\>\in N_{n-1}$ and so 
${\gamma}=\sup \<{\gamma}_\ell:\ell<{\omega}\>\in N_{n-1}$ as well.
Hence ${\alpha}_{n-1}\in  N_n\setm N_{n-1}\subs \oo\setm {\gamma}$ and so 
$J_{\alpha_{n-1}}$ is infinite.

But
\begin{displaymath}
J_{\alpha_{n-1}}=
\{i<{\omega}:\forall m<n\ {\alpha}_m\in \Homo {\dvec_m(i)}f\},
\end{displaymath}
so we are done.
\end{proof}

\begin{proof}[Proof of thereon \ref{tm:neg}]
Let $\nvec=\<N_{\xi}:{\xi}<\oo\>$ be an $\oo$-chain with $f\in N_0$  and let
$X\in \br \oo;\oo;$ be $\nvec$-separated.

Define the poset $\qcal=\<Q,\le\>$ as follows:
\begin{displaymath}
Q=\{q\in Fn(X,{\omega}): \text{$q^{-1}\{n\}$ is $f$-rainbow for each 
$n\in \ran q$}\},  
\end{displaymath}
and let $q\le q'$ iff $q\supset q'$.

\begin{lemma}
$\qcal$ satisfies c.c.c.  
\end{lemma}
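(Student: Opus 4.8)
The plan is to show that any uncountable family of conditions contains two compatible ones, using only that $f$ is $k$-bounded and that $f\in N_0$; the separation of $X$ by $\nvec$ will supply the models I need. First I would record two consequences of $k$-boundedness that drive everything. If $N\prec\hcal_{\kappa}$ is countable with $f\in N$ and $\{a,b\}\subs N$, then $\gamma=f(a,b)\in N$, and since $f^{-1}\{\gamma\}$ is finite (at most $k$ pairs) and lies in $N$, we get $f^{-1}\{\gamma\}\subs N$; hence \emph{every} pair of value $\gamma$ already lies in $N$. Secondly, for distinct $u,v$ the agreement set $S_{u,v}=\{w:f(u,w)=f(v,w)\}$ is countable: for each value $\gamma$, the pairs $\{u,w\},\{v,w\}$ realising $\gamma$ are among the $\le k$ members of $f^{-1}\{\gamma\}$, so only finitely many $w$ work for each $\gamma$. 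Being a countable subset of $\oo$, $S_{u,v}$ is bounded, so if $u,v\in N$ then $S_{u,v}\in N$ and therefore $S_{u,v}\subs N$.

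Given an uncountable family $\{q_\alpha:\alpha<\oo\}\subs Q$, I would apply the $\Delta$-system lemma and thin out so that $\{\dom q_\alpha\}$ forms a $\Delta$-system with root $R$, all $q_\alpha\restriction R$ coincide, and the order pattern of the new points together with the values assigned to them is fixed. Using that $X$ is separated by $\nvec$, together with a pressing-down argument, I would then select $\alpha<\beta$ and ordinals $\xi<\eta_1<\dots<\eta_\ell$ so that $\dom q_\alpha\subs N_\xi$, while the new points $y^{(0)},\dots,y^{(\ell-1)}$ of $q_\beta$ lie outside $N_\xi$ and form a staircase: $y^{(0)},\dots,y^{(j-1)}\in N_{\eta_j}$ but $y^{(j)}\notin N_{\eta_j}$, with $N_\xi\subs N_{\eta_j}$ for every $j$. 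Since the domains meet only in $R$ and agree there, $q_\alpha\cup q_\beta$ is a function, and it remains only to check that each colour class is $f$-rainbow.

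I would rule out a collision $f(e)=f(e')$ with $e\ne e'$ lying in a common colour class of $q_\alpha\cup q_\beta$. By the first consequence, if either of $e,e'$ were contained in $N_\xi$ then both would be, forcing both into $q_\alpha$ (the class points inside $N_\xi$ are exactly those of $q_\alpha$) and contradicting that $q_\alpha$ is rainbow; hence each of $e,e'$ contains a new point of $q_\beta$. Let $y^*$ be the new point in $e\cup e'$ highest in the staircase and $N_\eta$ the corresponding model, so $N_\eta$ contains $\dom q_\alpha$ and all other $q_\beta$-new points occurring, but $y^*\notin N_\eta$. If only one of $e,e'$ contains $y^*$, the other lies in $N_\eta$, so its value's whole preimage is in $N_\eta$, forcing $y^*\in N_\eta$, a contradiction. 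If both contain $y^*$, write $e=\{y^*,u\}$, $e'=\{y^*,v\}$ with $u\ne v$ and $u,v\in N_\eta$; then $y^*\in S_{u,v}\subs N_\eta$ by the second consequence, again a contradiction. Thus no colour class fails and $q_\alpha\cup q_\beta\in Q$.

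I expect the main obstacle to be the staircase arrangement: extracting from the separation of $X$ by $\nvec$ two conditions whose new points sit above $N_\xi$ and are separated by models forming an increasing chain above $\xi$. Once this configuration is secured, the verification is forced by the two boundedness facts and needs no appeal to the full strength of the \ark\ property.
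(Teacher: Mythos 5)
There is a genuine gap, and it sits exactly where your argument claims to be easiest: your second preliminary fact is false. For a $k$-bounded colouring, $k$-boundedness only shows that each colour $\gamma$ contributes at most $k$ points to the agreement set $S_{u,v}=\{w:f(u,w)=f(v,w)\}$; nothing bounds the number of colours that occur, so $S_{u,v}$ can be uncountable (e.g.\ a $2$-bounded $f$ with $f(u,w)=f(v,w)=\gamma_w$, all $\gamma_w$ distinct, for uncountably many $w$). Worse, for the functions this lemma is actually about, uncountable agreement sets are \emph{forced} to exist: if $f$ is an \ark-function with $k\ge 2$ and $\{u_i,v_i\}$ ($i<{\omega}$) are pairwise disjoint pairs, then applying the \ark\ property to the sequence $\<\{\{u_i,v_i\}\}:i<{\omega}\>\in \seqk {\omega}{\oo}$ gives ${\gamma}<\oo$ with $\oo\setm {\gamma}\subs \bigcup_{i<{\omega}}S_{u_i,v_i}$, so some $S_{u_i,v_i}$ is uncountable and certainly not contained in any countable $N_{\eta}$. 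Hence the last case of your collision analysis --- $e=\{y^*,u\}$, $e'=\{y^*,v\}$ sharing the top new point, which you dismiss via $y^*\in S_{u,v}\subs N_{\eta}$ --- is precisely the configuration that occurs in abundance, and your argument cannot exclude it. (Your first fact, the staircase arrangement from separation, and the case analysis up to that point are fine, but they use only $k$-boundedness.)

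Your closing remark that the verification ``needs no appeal to the full strength of the \ark\ property'' is also impossible on general grounds: by the Abraham--Cummings--Smyth theorem quoted in the introduction, consistently there is a $2$-bounded $c$ which c.c.c-indestructibly establishes $\oo\nar^* (\oo)^2_{2-bdd}$; for such $c$ the poset of finite partial rainbow partitions of an uncountable separated $X$ cannot be c.c.c.\ (forcing with it would split $X$ into countably many $c$-rainbow sets, one uncountable), yet your argument, using only $k$-boundedness plus elementarity, would apply to $c$ verbatim. The paper's proof is built around exactly this obstacle: it attaches to each ${\xi}\in X$ a set $d_{\xi}\in \br \oo;k;$ of ordinals with the same ${\rho}$-value as ${\xi}$, thins using the finite sets $F({\zeta})$, and then invokes lemma \ref{lm:main} --- a positive use of the \ark\ property --- to choose the two conditions $q_{{\zeta}_i}, q_{\zeta}$ so that every crossing pair $\{{\xi},{\eta}\}$ gets a colour whose \emph{entire} $f$-preimage is $\bigl\{\{{\delta},{\eta}\}:{\delta}\in d_{\xi}\bigr\}$; since $d_{\xi}$ meets the two domains only in ${\xi}$ (distinct points of the separated $X$ have distinct ${\rho}$-values), collisions among crossing pairs --- including your $y^*$ case --- become impossible. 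Some such use of the \ark\ hypothesis, not boundedness alone, is what makes the lemma true; to repair your proof you would need to choose the compatible pair $\alpha<\beta$ using lemma \ref{lm:main} (or an equivalent homogeneity selection), at which point you essentially recover the paper's argument.
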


\begin{proof}[Proof of the lemma]
Assume that $\{q_{\nu}:{\nu}<\oo\}\subs Q$.

Let $x_{\nu}=\dom q_{\nu}$, $L_{\nu}=\ran q_{\nu}$,
and $x_{{\nu},\ell}=q_{\nu}^{-1}\{\ell\}$ for $\ell\in L_{\nu}$.

We can assume that   
\begin{enumerate}[(1)]
\item  $\{x_{\nu}:{\nu}<\oo\}$ forms a
$\Delta$ system with kernel $x$,
\item $x< x_{\zeta}\setm x<x_{\xi}\setm x$  for ${\zeta}<{\xi}<\oo$,
\item $L_{\nu}=L$ for each ${\nu}<\oo$,
\item $q_{\nu}\restriction \br x;2;=q$ for each ${\nu}<\oo$,
\end{enumerate}

For ${\zeta}\in \oo$ let
\begin{displaymath}
F({\zeta})=\{{\xi}<\oo: f''[x_{\zeta},x_{\zeta}\setm x]\cap
f''[x_{\xi},x_{\xi}\setm x]\ne\empt\}.
\end{displaymath}
Since $f$ is $k$-bounded, $F({\zeta})$ is finite, and so 
there is an $F$-free set $Z=\{{\zeta}_i:i<\oo\}\in \br \oo;\oo; $.

For $x\in \oo$ let ${\rho}(x)=\min \{{\nu}:x\in N_{\nu}\}$.
For each ${\xi}\in X$ pick 
$d_{\xi}\in \br \oo;k;$ such that 
 ${\xi}\in d_{\xi}$ and ${\rho}({\eta})={\rho}({\xi})$ 
for each ${\eta}\in d_{\xi}$.

For ${\zeta}\in Z$  let
$D_{{\zeta}}=\{d_{\xi}:{\xi}\in x_{{\zeta}}\setm x\}$.

Let $\dvec=\<D_{{\zeta}_i}:i<{\omega}\>$.
Clearly $\dvec\in \seqk {\omega}{\oo}$.

Since CH holds there is ${\gamma}<\oo$ such that 
$\dvec\in N_{\gamma}$.

Let ${\zeta}\in Z$ such that $N_{\gamma}\cap (x_{\zeta}\setm x)=\empt$.

Apply lemma \ref{lm:main} for $n=|x_{\zeta}\setm x|$,
$\dvec_m=\dvec$ for $m<n$ and 
$\{{\alpha}_m:m<n\}=x_{\zeta}\setm x$.
Then,  there is $i<{\omega}$ such that 
\begin{displaymath}
(\forall m<n) \  {\alpha}_m\in \Homo {\dvec(i)}f.
\end{displaymath}
By the construction it means that 
\begin{displaymath}
(\forall {\eta}\in x_{{\zeta}}\setm x)\ 
(\forall {\xi}\in x_{{\zeta}_i}\setm x)\ 
(\forall {\delta}\in d_{\xi})\ f({\delta},{\eta})=f({\xi},{\eta}). 
\end{displaymath}

\smallskip

\noindent
{\bf Claim:} $q_{{\zeta}_i}\cup q_{\zeta}\in Q$, i.e. 
$f$ is 1--1 on $\br x_{{\zeta}_i,\ell}\cup x_{{\zeta},\ell};2;$
for all $\ell\in L$.

\smallskip

Let  
${\xi},{\eta},{\xi}',{\eta}'\in x_{{\zeta}_i,\ell}\cup x_{{\zeta},\ell}$ with 
${\xi}<{\eta}$ and ${\xi}'<{\eta}'$ such that 
$f({\xi},{\eta})=f({\xi}',{\eta}')$.

Assume first that $\{{\xi},{\eta}\},\{{\xi}',{\eta}'\}\in \br 
x_{{\zeta}_i,\ell};2;\cup \br x_{\zeta,\ell};2;$.
Since $q_{{\zeta}_i}, q_{\zeta}\in Q$ we can assume that 
$\{{\xi},{\eta}\}\in\br x_{{\zeta}_i,\ell};2;\setm  \br x_{\zeta,\ell};2;$ and 
$\{{\xi}',{\eta}'\}\in \br x_{\zeta,\ell};2;\setm \br x_{{\zeta}_i,\ell};2;$,
(or $f({\xi},{\eta})=f({\xi}',{\eta}')$ implies 
$\{{\xi},{\eta}\}=\{{\xi}',{\eta}'\}$).
Then $f({\xi},{\eta})\in f''[x_{{\zeta}_i}, x_{{\zeta}_i}\setm x]$ and 
$f({\xi}',{\eta}')\in f''[x_{\zeta}, x_{\zeta}\setm x]$, 
so ${\zeta}_i\notin F({\zeta})$ implies $f({\xi},{\eta})\ne f({\xi}',{\eta}')$.

So we can assume that e.g. $\{{\xi},{\eta}\}\notin \br 
x_{{\zeta}_i,\ell};2;\cup \br x_{\zeta,\ell};2;$, 
i.e. ${\xi}\in x_{{\zeta}_i,\ell}\setm x$ and ${\eta}\in
x_{\zeta,\ell}\setm x$. But we know that 
\begin{displaymath}
(\forall {\delta}\in d_{\xi})\ f({\delta},{\eta})=f({\xi},{\eta}).  
\end{displaymath}
 Since $f$ is $k$-bounded and
$|d_{\xi}|=k$
we have 
\begin{displaymath}
\bigl\{\{{\xi}',{\eta}'\}:f({\xi}',{\eta}')=f({\xi},{\eta})
\bigr\}=
\bigl\{\{{\delta},{\eta}\}:{\delta}\in d_{\xi}\bigr\}.  
\end{displaymath}
 But 
$d_{\xi}\cap (x_{{\zeta}_i,\ell}\cup x_{\zeta,\ell})=\{{\xi}\}$ because 
${\rho}({\delta})={\rho}({\xi})$ for each ${\delta}\in d_{\xi}$.
Hence 
$f({\xi}',{\eta}')=f({\xi},{\eta})$ implies
${\xi}={\xi}'$ and ${\eta}={\eta}'$.
\end{proof}

Since $\{q\in Q:{\xi}\in \dom q\}$ is dense in $\qcal$
for each ${\xi}\in X$ we have that if
$\gcal$ is the  generic filter in $Q$ and 
$g=\cup\gcal$, then 
$\{g^{-1}\{n\}:n\in {\omega}\}$
is a partition of $X$ into countably many
$f$-rainbow sets,
which completes the proof of Theorem \ref{tm:neg}. 
\end{proof}

To prove theorem \ref{tm:pos} we need some more preparation.
We will use a  black box theorem from \cite{So}.

Given a set $K$ and a natural number $m$ let 
\begin{displaymath}
\text{{$\finmok$}=
$\{s:\text{$s$ is a function,}\ \dom (s)\in \br \oo;m;,\ran (s)\subs K\}$}.
\end{displaymath}
A sequence 
$\<s_{\alpha}:{\alpha}<\oo\>\subs\finmok$ is 
 {\em \domd } iff {$\domm[ s_{\alpha}]\cap \domm[ s_{\beta}]=\empt$} 
all ${\alpha}<{\beta}<\oo$. 

 Let $H$ be a graph on $\oo\times K$, $m\in {\omega}$.
{We say that $H$ is {$m$-solid}
if given any {\domd\ sequence $\<s_{\alpha}:{\alpha}<\oo\>\subs \finmok $}}
{there are ${\alpha}<{\beta}< \oo$ such that 
{\begin{displaymath}
[s_{\alpha},s_{\beta}]\subs H.
\end{displaymath}}}
{$H$ is called {\em strongly solid} iff it is $m$-solid for each $m\in{\omega}$.}

\begin{btheorem}[{\cite[Theorem 2.2]{So}}]
Assume $2^{\oo}={\omega}_2$. If $H$ is a {strongly solid} graph on
$\oo\times K$, where $|K|\le 2^{\oo}$, then for each
$m\in {\omega}$ there is a c.c.c poset $P$ of size ${\omega}_2$
such that 
\begin{displaymath}
V^P\models
\text{\em ``$H$ is {c.c.c-indestructibly $m$-solid.}''}
\end{displaymath}
\end{btheorem}

The theorem above is build on a method of 
 Abraham and  Todor\v cevi\v c from \cite{AT}.

We need one more lemma before we can apply
the Black Box Theorem above.

\begin{lemma}\label{lm:disjoint}
There is a function $r:\oo \rightarrow {\omega}$ such that 
for each $A,B\in \br \oo;<{\omega};$ if 
$r(A)=r(B)$ then $A\cap B$ is an initial segment of $A$ and $B$.  
\end{lemma}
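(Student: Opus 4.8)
The plan is to reduce the statement to a coloring of \emph{pairs} satisfying two clean properties, and then lift it to finite sets by recording a ``matrix of pair-colors'' and arguing by induction on the size. (Here $r$ is of course a coloring of $\br\oo;<{\omega};$; I read ``$r\colon\oo\to\omega$'' as a typo for $r\colon\br\oo;<{\omega};\to\omega$.) First note the elementary reformulation: for $A=\{a_0<\dots<a_{n-1}\}$, the set $A\cap B$ fails to be an initial segment of $A$ exactly when there are $x\in A\setm B$ and $y\in A\cap B$ with $x<y$. So it suffices to build $F\colon\br\oo;2;\to\omega$ (writing $F(a,b)$ for $a<b$) with \emph{(I)} for each fixed $b$ the map $a\mapsto F(a,b)$ is injective on $b$, and \emph{(II)} $F(a,b)\ne F(b,c)$ whenever $a<b<c$. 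Given such an $F$, I set $r(A)$ to be a code for the pair $\bigl(|A|,\,(F(a_i,a_j))_{i<j}\bigr)$; there are only countably many such codes, so fixing an injection of them into $\omega$ gives $r\colon\br\oo;<{\omega};\to\omega$ with $r(A)=r(B)$ iff $|A|=|B|$ and the two $F$-matrices agree.

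To construct $F$ I would build, by recursion in ZFC, a $\subseteq^{*}$-decreasing tower $\langle V_\alpha:\alpha<\oo\rangle$ of infinite subsets of $\omega$ such that $V_\alpha\setm V_\beta$ is infinite for all $\alpha<\beta$: at successor stages remove an infinite, coinfinite piece, and at countable limit stages pass to a pseudo-intersection (arranging that it is properly contained, with infinite difference, below each earlier set). Then for $a<b$ choose $F(a,b)\in V_a\setm V_b$, picking pairwise distinct values as $a$ ranges over the countably many ordinals below $b$ — possible since each $V_a\setm V_b$ is infinite. Property \emph{(I)} holds by this choice, and \emph{(II)} is automatic and choice-independent: $F(a,b)\in V_a\setm V_b$ lies outside $V_b$, whereas $F(b,c)\in V_b\setm V_c$ lies inside $V_b$, so the two can never coincide.

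The lift is an induction on $n=|A|=|B|$ (equal sizes being forced by $r(A)=r(B)$; the case $n\le 1$ is trivial). Assuming matrix equality, I restrict to $A^-=A\setm\{a_{n-1}\}$ and $B^-=B\setm\{b_{n-1}\}$, whose matrices still agree, and apply the inductive hypothesis to $A^-,B^-$; then I analyze the two top elements. If $a_{n-1}=b_j$ for some $j<n-1$, comparing the $(j,n-1)$ entries gives $F(a_j,a_{n-1})=F(a_{n-1},b_{n-1})$ with $a_j<a_{n-1}<b_{n-1}$, contradicting \emph{(II)}; symmetrically $b_{n-1}$ is not an interior point of $A$. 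Hence the tops are either equal — in which case reading the entries $(i,n-1)$ and using \emph{(I)} forces $a_i=b_i$ for all $i$, so $A=B$ — or not shared at all, in which case $A\cap B=A^-\cap B^-$, and an initial segment of $A^-$ remains one after re-attaching the strictly larger top $a_{n-1}$ (and likewise for $B$). Either way $A\cap B$ is an initial segment of both.

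The step I expect to be the genuine obstacle is property \emph{(II)}, and with it the tower construction. The naive attempt $F(a,b)=e_b(a)$ for a coherent sequence of injections $e_\beta\colon\beta\to\omega$ yields \emph{(I)} but \emph{fails} \emph{(II)}: a single ordinal can appear as the top of one set and an interior point of another, and the two colors (computed from \emph{different} injections) may collide, producing a genuine counterexample already for $2$-element sets. The point of the tower is precisely to make the ``colors used with $b$ as the larger endpoint'' (lying outside $V_b$) disjoint from the ``colors used with $b$ as the smaller endpoint'' (lying inside $V_b$); verifying that $V_a\setm V_b$ stays infinite through the limit stages is the only place where real care is needed, and it is exactly what drives both \emph{(II)} and the inductive argument above.
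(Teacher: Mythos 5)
Your proposal is correct, and it takes a genuinely different route from the paper's. The paper codes $A=\{\alpha_0<\dots<\alpha_{n-1}\}$ in a single step: it fixes a countable dense set $\dcal\subs \omega^{\oo}$ and injections $f_\alpha:\alpha\to\omega$, and sets $r(A)=\langle d_A,\langle f_{\alpha_i}''(A\cap\alpha_i):i<|A|\rangle\rangle$, where $d_A\in\dcal$ satisfies $d_A(\alpha_i)=i$; then $r(A)=r(B)$ immediately forces any common element to occupy the same position in both enumerations (since $d_A=d_B$) and the two sets to coincide below it (since $f_{\alpha_i}$ is injective), so no induction is needed. You instead factor the problem through a pair colouring $F$ with your properties (I) and (II), realized from a $\subseteq^*$-decreasing tower with infinite differences --- a ZFC object, since countable $\subseteq^*$-decreasing sequences have pseudo-intersections, and $V_\alpha\setm V_\beta$ indeed stays infinite through limits because all but finitely many elements of the infinite set $V_\alpha\setm V_{\alpha+1}$ avoid $V_\beta$ --- and then lift by recording the $F$-matrix and inducting on $|A|$. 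I checked the lifting: (II) holds since $F(a,b)\notin V_b$ while $F(b,c)\in V_b$; your $(j,n-1)$-entry argument correctly rules out the top of one set being an interior point of the other; (I) settles the equal-tops case; and in the disjoint-tops case an initial segment of $A^-$ remains one of $A$ after re-attaching the strictly larger top. Comparatively: the paper's proof is shorter and its verification is one step, but it quietly invokes separability of $\omega^{\oo}$ (Hewitt--Marczewski--Pondiczery, using $\oo\le 2^{\omega}$), whereas yours uses only a tower built by plain transfinite recursion and isolates a reusable combinatorial principle (the colours used at $b$ ``from below'' are disjoint from those used ``from above''), at the cost of the inductive lifting. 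Your closing diagnosis is also accurate: a coherent family of injections $e_b:b\to\omega$ gives (I) but not (II), and that collision is exactly what the paper's extra coordinate $d_A$ --- which records positions, playing the role of your (II) --- is there to prevent, while the $f_\alpha$-traces play the role of your (I).
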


\begin{proof}
Let $\dcal$ be a countable dense subset of the product space
${\omega}^{\oo}$. Moreover, for each ${\alpha}<\oo$ fix a function
$f_{\alpha}:{\alpha}\stackrel{1-1}{\rightarrow}{\omega}$.

Let $A=\{{\alpha}_0,\dots, {\alpha}_{n-1}\}\in \br \oo;<{\omega};$,
${\alpha}_0<\dots {\alpha}_{n-1}$.

Pick $d_A\in \dcal$ such that 
$d_A({\alpha}_i)=i$ for each $i<|A|$.
Let 
\begin{displaymath}
r(A)=\< d_A, \<f_{{\alpha}_i}''(A\cap {{\alpha}_i}):i<|A|\>\>. 
\end{displaymath}
Since the range of $r$ is countable it is enough to prove that
if $r(A)=r(B)$ then $A\cap B$ is an initial segment of $A$ and $B$.  

Write $A=\{{\alpha}_i:i<m\}$, 
${\alpha}_0<\dots <{\alpha}_{n-1}$,
 and $B=\{{\beta}_j:j<m\}$, ${\beta}_0<\dots {\beta}_{m-1}$.

Assume that ${\alpha}_i={\beta}_j$.
Then $d_A({\alpha}_i)=i$ and $d_B({\beta}_j)=j$.
Since $d_A=d_B$ it follows that $i=j$.
So $r(A)=r(B)$ yields 
$f_{{\alpha}_i}''(A\cap {\alpha}_i)=f_{{\alpha}_i}''(B\cap {\alpha}_i)$.
Since  $f_{{\alpha}_i}$ is 1--1 on ${\alpha}_i$ it follows
that $A\cap {\alpha}_i=B\cap {\alpha}_i$.
\end{proof}

We will use the following corollary of this lemma.
\begin{corollary}\label{cor:r}
There is a function $r:\oo \to {\omega}$ such that 
for each $A,B\in \br \oo;<{\omega};$ if $\min(A)\ne \min(B)$ and  
$r(A)=r(B)$ then $A\cap B=\empt$. 
\end{corollary}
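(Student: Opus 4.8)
The plan is to take $r$ to be exactly the function furnished by Lemma \ref{lm:disjoint}; the only work is to check that the extra hypothesis $\min(A)\ne\min(B)$ upgrades the conclusion ``$A\cap B$ is a common initial segment'' all the way to ``$A\cap B=\empt$''. So no new construction is needed: the same $r$ serves for the corollary.

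First I would recall what Lemma \ref{lm:disjoint} delivers, namely a function $r$ defined on $\br \oo;<{\omega};$ with the property that $r(A)=r(B)$ implies that $A\cap B$ is an initial segment of both $A$ and $B$. The key elementary observation is that a \emph{nonempty} initial segment of a finite set has the same least element as the set itself: if $A=\{a_0<a_1<\dots<a_{n-1}\}$, then an initial segment of $A$ has the form $\{a_0,\dots,a_{k-1}\}$, and as soon as $k\ge 1$ it contains $a_0=\min A$, so its minimum is $\min A$.

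Now suppose $r(A)=r(B)$ but $A\cap B\ne\empt$. By Lemma \ref{lm:disjoint} the set $A\cap B$ is a nonempty initial segment of $A$ and also of $B$, so applying the observation twice yields $\min(A\cap B)=\min A$ and $\min(A\cap B)=\min B$, whence $\min A=\min B$. Taking the contrapositive gives precisely the statement of the corollary: if $\min(A)\ne\min(B)$ and $r(A)=r(B)$, then $A\cap B=\empt$.

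I do not expect any genuine obstacle here, since the corollary is a purely formal consequence of Lemma \ref{lm:disjoint} combined with the triviality about least elements of initial segments. The only point requiring a word of care is the degenerate case in which one of $A,B$ is empty; but then $A\cap B=\empt$ holds automatically and there is nothing to prove, so the argument above covers the situation where both sets are nonempty and their minima are compared.
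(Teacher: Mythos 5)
Your proposal is correct and matches the paper's intent exactly: the paper offers no separate proof of Corollary \ref{cor:r}, treating it as an immediate consequence of Lemma \ref{lm:disjoint}, and your argument (a nonempty common initial segment of $A$ and $B$ must contain, hence equal in minimum, both $\min A$ and $\min B$, so $\min A\ne\min B$ forces $A\cap B=\empt$) is precisely the intended one-line deduction.
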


\begin{proof}[Proof of Theorem \ref{tm:pos}]
Let $\nvec=\<N_{\xi}:{\xi}<\oo\>$ be an $\oo$-chain
with $f\in N_0$. Fix the function $r$
from corollary \ref{cor:r} above.

For ${\xi}\in \oo$ let ${\rho}({\xi})=\min \{{\nu}:{\xi}\in N_{\nu}\}$.

Let $K=\br \oo;k;\times \oo \times {\omega}$.
For any function $c:\oo \to \oo$ define a
graph $H_c$ on  $\oo\times K$ as follows.
 
If $x,x'\in \oo\times K$, $x=\<{\zeta},\<d,{\xi},m\>\>$, 
$x'=\<{\zeta}',\<d',{\xi}',m'\>\>$, 
${\zeta}<{\zeta}'$, let  $\{x,x'\}$ be
an edge in $H_c$ provided \\
IF
\begin{enumerate}[(1)]
\item $m=m'$,
\item ${\rho}({\xi}')={\zeta}' $,
\item ${\zeta}<\min d$,
\item $r(\{{\zeta}\}\cup d)=m$,
\end{enumerate}
THEN
\begin{enumerate}[(1)]
\addtocounter{enumi}{4}
\item $c({\delta},{\xi}')=c({\varepsilon},{\xi}')$ 
for each ${\delta},{\varepsilon}\in d$.
\end{enumerate}

\begin{lemma}\label{lm:1}
If  $H_c$ is $1$-solid for some  colouring $c$ then $c$ establishes
$\oo\nar^*[(\oo;\oo)]_{k-b.d.d.} $.   
\end{lemma}
 
\begin{proof}

We will show that for all $X=\{{\xi}_{\beta}:{\beta}<\oo\}\in \br \oo;\oo;$
and for all disjoint family  $\{d_{\alpha}:{\alpha}<\oo\}\subs \br\oo;k;$
there are  ${\alpha},{\beta}<\oo $ such  that
$\max d_{\alpha}<{\xi}_{\beta}$ and 
$|c''[d_{\alpha}, \{{\xi}_{\beta}\}]|=1$. 

By thinning out and renumerating of the sequences 
we can assume that
\begin{enumerate}[(1)]
\item  
${\rho}({\xi}_{\alpha})<\min {\rho}''d_{\alpha}<\max
{\rho}''d_{\alpha}<{\rho}({\xi}_{{\alpha}+1})$ 
for ${\alpha}<{\beta}<\oo$,
\item $r(\{{\rho}({\xi}_{\alpha})\cup d_{\alpha}\})=m$ for some 
$m\in {\omega}$ for each ${\alpha}\in \oo$.
\end{enumerate}

Let 
$x_{\alpha}=\<{\rho}({\xi}_{\alpha}),\<d_{\alpha},{\xi}_{\alpha},
  m\>\>$
for ${\alpha}<\oo$. Since 
the sequence $\<\{x_{\alpha}\}:{\alpha}<\oo\>$ is \domd,
and (1)-(4) hold for each ${\alpha}<{\beta}<\oo$, there are
${\alpha}<{\beta}<\oo$ such that (5) holds for 
$x_{\alpha}$ and $x_{\beta}$ because $H_c$ is $1$-solid,
i.e.  $|c''[d_{\alpha},\{{\xi}_{\beta}\}]|=1$,
 which was to be proved.  
\end{proof}

\begin{lemma}\label{lm:strongly}
If $c$ is an \ark-function and $CH$ holds 
then $H_{c}$ is strongly solid.  
\end{lemma}

\begin{proof}
Let $m\in {\omega}$ and 
$\<E_{\alpha}:{\alpha}<\oo\>\subs \finmok$ be a \domd\ sequence.

Write $E_{\alpha}=\{x_{{\alpha},i}:i<m\}$,
$x_{{\alpha},i}=
\<{\zeta}_{{\alpha},i}, \<d_{{\alpha},i},{\xi}_{{\alpha},i}, 
n_{{\alpha},i}\>\>$.

We can assume that 
\begin{enumerate}[(i)]
\item $n_{{\alpha},i}=n_i$,
\item ${\rho}({\xi}_{{\alpha},i})={\zeta}_{{\alpha},i}$
\item ${\zeta}_{{\alpha},i}<\min d_{{\alpha},i}$,
\item $r(\{{\zeta}_{{\alpha},i}\}\cup(d_{{\alpha},i}))=n_i$,  
\item $\max {\rho}''d_{{\alpha},i}<{\zeta}_{{\beta},j}$ for 
${\alpha}<{\beta}<\oo$ and $i,j<m$.
\end{enumerate}

Let $N=\{n_i:i<m\}$.
For  ${\alpha}<\oo$ and $n\in N$ put 
$D_{{\alpha},n}=\{d_{{\alpha},i}:n_i=n\}$.

\noindent{\bf Claim:} $D_{{\alpha},n}\in \fink \oo$.

Indeed, if $i\ne j<m$ and $n_i=n_j$ then 
$r(\{{\zeta}_{{\alpha},i}\}\cup d_{{\alpha},i})=n_i=n_j=
r(\{{\zeta}_{{\alpha},j}\}\cup d_{{\alpha},j})$ but 
$\min (\{{\zeta}_{{\alpha},i}\}\cup d_{{\alpha},i})=
{\zeta}_{{\alpha},i}\ne {\zeta}_{{\alpha},j}=
\min (\{{\zeta}_{{\alpha},j}\}\cup d_{{\alpha},j})$
so $d_{{\alpha},i}\cap d_{{\alpha},j}=\empt$
by the choice of the function $r$.

\medskip
 
(iii) and (v) together give $\max (\cup D_{{\alpha},n})<\min
(\cup D_{{\beta},n})$
for ${\alpha}<{\beta}<\oo$ and $n\in N$.

Thus $\dvec'_n=\<D_{\ell,n}:\ell<{\omega}\>\in \seqk {\omega}\oo$.

Since CH holds there is ${\gamma}<\oo$ such that 
$\{\dvec'_n:n\in N\}\subs N_{\gamma}$. 
Pick ${\alpha}<\oo$ such that 
$N_{\gamma}\cap \{{\zeta}_{{\alpha},j}:j<m\}=\empt$.

Let $\dvec_j=\dvec'_{n_j}$ for $j<m$.

We are going to apply lemma \ref{lm:main} as follows:
 $\mvec=\<N_{\gamma},N_{{\zeta}_j}:j<m\>$ is an elementary
$m+1$-chain, $f,\dvec_0,\dots, \dvec_{m-1} \in N_0$
and ${\xi}_{\alpha,j}\in N_{{\zeta}_j}\setm N_{{\zeta}_{j-1}}$ for
$j<m$, where
${\zeta}_{-1}={\gamma}$. Hence, by lemma \ref{lm:main}
there is $\ell<{\omega}$ such that 
for each $j<m$
\begin{displaymath}\tag{$\circ$}\label{circ}
{\xi}_{{\alpha},j}\in \Homo {\vec D_j(\ell)}f.  
\end{displaymath}

\noindent{\bf Claim} $[x_\ell,x_{\alpha}]\subs H_c$.

Let $i,j<m$. We show $\{x_{\ell,i}, x_{{\alpha},j}\}\in H_c$.
(2)-(4) holds by the construction.
If $n_i\ne n_j$ then (1) fails so we are done.
Assume that $n_i=n_j=n\in N$.
Then $d_{\ell,i}\in \vec D'_n(\ell)=\vec D_j(\ell)$.
Thus 
\begin{displaymath}
(\forall {\delta},{\delta}'\in d_{\ell,i})\ 
  f({\delta},{\xi}_{{\alpha},j})
=f({\delta}',{\xi}_{{\alpha},j})  
\end{displaymath}
by (\ref{circ}).
Hence (5) holds and so  $\{x_{\ell,i}, x_{{\alpha},j}\}\in H_c$.
\end{proof}

Now we can easily conclude the proof of  \ref{tm:pos}.

Let $f:\br \oo;2;\rightarrow\oo$ be an \ark-function.
By lemma \ref{lm:strongly}, the graph $H_f$
is strongly solid.
Since $GCH$ holds, we can apply our Black Box Theorem
to find a c.c.c. poset $P$
such that 
\begin{displaymath}
V^P\models\text{$H_f$ is c.c.c-indestructibly 1-solid.}  
\end{displaymath}
But then, by lemma \ref{lm:1},
\begin{displaymath}
V^P\models\text{$f$  c.c.c-indestructibly establishes
$\oo\nar^*[(\oo;\oo)]_{k-b.d.d.} $.}  
\end{displaymath}
\end{proof}

\begin{proof}[Proof of theorem \ref{tm:acgen}]
Since GCH holds,  by lemma \ref{lm:const} there is an \ark-function 
$g:\br\oo;2;\to \oo$ .
By theorem \ref{tm:neg} there is a set $X\in \br \oo;\oo;$
and a c.c.c. poset $Q$ such that 
\begin{displaymath}
V^Q\models\text{ $X$ has a partition into countably many 
$g$-rainbow sets}.  
\end{displaymath}
Let $h:\oo\to X$ be a bijection and 
put $f=g \circ h$.
Then   
\begin{displaymath}
V^Q\models\text{ $\oo$ has a partition into countably many 
$f$-rainbow sets}.  
\end{displaymath}
Since $f$ is an \ark-function as well, we can apply
theorem \ref{tm:pos} to obtain that 
\begin{displaymath}
V^P\models\text{ $f$  c.c.c-indestructibly establishes
 $\oo \nar^* [(\oo;\oo)]_{{k}-{\rm bdd}}$},
\end{displaymath}
fro some c.c.c. poset $P$,
which proves the theorem.
\end{proof}
Lemma \ref{lm:const} and theorem \ref{tm:pos}  give immediately
\begin{corollary}\label{tm:main}
 $\oo \nar^* [(\oo;\oo)]_{{k}-{\rm bdd}}$ is consistent with 
Martin's Axiom.
\end{corollary}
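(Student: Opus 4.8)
The plan is to force Martin's Axiom on top of the indestructibility model supplied by Theorem \ref{tm:pos}, letting c.c.c-indestructibility carry the negative relation through the iteration. First I would start in a ground model $V$ of GCH. By Lemma \ref{lm:const} there is an \ark-function $f:\br\oo;2;\to\oo$, and by Theorem \ref{tm:pos} there is a c.c.c.\ poset $P$ with
\begin{displaymath}
V^P\models\text{``$f$ c.c.c-indestructibly establishes $\oo\nar^*[(\oo;\oo)]_{{k}-{\rm bdd}}$''.}
\end{displaymath}

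Next I would work in $V^P$ and force Martin's Axiom in the usual way, by a finite-support iteration $\qcal$ of c.c.c.\ posets of length $\oot$, whose bookkeeping forces, cofinally often, with every c.c.c.\ poset of size at most $\oo$ arising in an intermediate extension. The one point to verify is the cardinal arithmetic in $V^P$. Since $V\models$ GCH and, by the Black Box Theorem, $P$ is c.c.c.\ of size $\oot$, a count of nice names shows $V^P\models 2^{\oo}\le\oot$ (each nice name for a subset of $\oo$ assigns a countable antichain of $P$ to each element of $\oo$, and there are only $\oot^{\oo}=\oot$ such assignments); as $2^{\oo}\ge\oot$ is automatic, $V^P\models 2^{\oo}=\oot$, whence $\oot^{<\oot}=\oot$. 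Therefore the standard length-$\oot$ finite-support c.c.c.\ iteration correctly forces Martin's Axiom together with $2^{\omega}=\oot$, and, being a finite-support iteration of c.c.c.\ forcings, $\qcal$ is itself c.c.c.\ in $V^P$.

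Finally I would invoke c.c.c-indestructibility with $\qcal$ as the witnessing poset. By the conclusion of Theorem \ref{tm:pos},
\begin{displaymath}
V^{P\ast\qcal}\models\text{``$f$ establishes $\oo\nar^*[(\oo;\oo)]_{{k}-{\rm bdd}}$'',}
\end{displaymath}
so the negative relation holds in the final model, while $V^{P\ast\qcal}\models$ Martin's Axiom by the choice of $\qcal$. Thus $V^{P\ast\qcal}$ witnesses the consistency of $\oo\nar^*[(\oo;\oo)]_{{k}-{\rm bdd}}$ with Martin's Axiom. The only genuine obstacle is the arithmetic check just made, ensuring that the MA iteration over $V^P$ stays c.c.c.\ and of the correct length; once that is in place the indestructibility hypothesis applies verbatim to $\qcal$, with no interaction between $f$ and the iteration to be analyzed --- which is exactly what ``c.c.c-indestructibly'' buys us.
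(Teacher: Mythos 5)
Your proposal is correct and follows exactly the paper's route: the paper's entire proof of the corollary is ``Lemma \ref{lm:const} and theorem \ref{tm:pos} give immediately,'' leaving the standard step --- forcing Martin's Axiom over $V^P$ by a c.c.c.\ iteration and invoking c.c.c-indestructibility --- implicit, and you have simply spelled that step out. One tiny gloss: your inference from $2^{\oo}=\oot$ to $\oot^{<\oot}=\oot$ in $V^P$ also needs $2^{\oo[1]}=\oot$ there (where by $\oo[1]$ we mean $\oo$ itself, i.e.\ you need $2^{\oo}$ computed at $\oo={\omega}_1$), which follows by the same nice-name count applied to subsets of $\oo$ using GCH in $V$ and $|P|=\oot$, so nothing is actually missing.
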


\end{document}